\newtheorem{theo}{Theorem}[section]
\newtheorem{cor}[theo]{Corollary}
\newtheorem{lem}[theo]{Lemma}
\newtheorem{prop}[theo]{Proposition}
\newtheorem{defn}[theo]{Definition}
\newtheorem{rmk}[theo]{Remark}
\newtheorem{ex}[theo]{Example}
\newtheorem{prob}{Problem}
\newtheorem*{conj}{Conjecture}
\newcommand{\N}{\mathbb{N}}
\newcommand{\s}{\sigma}
\newcommand{\V}{\mathcal{V}}
\newcommand{\VV}{\bar{\mathcal{V}}}
\newcommand{\U}{\mathcal{U}}
\newcommand{\A}{\mathcal{A}}
\newcommand{\h}{\tilde{H}}
\newcommand{\B}{\mathcal{B}}
\newcommand{\F}{\mathcal{F}}
\newcommand{\G}{\mathcal{G}}
\newcommand{\Pp}{\mathcal{P}}
\newcommand{\x}{\mathrm{x}}
\newcommand{\y}{\mathrm{y}}
\newcommand{\w}{\mathrm{w}}
\newcommand{\vv}{\mathrm{v}}
\newcommand{\uu}{\mathrm{u}}
\newcommand{\z}{\mathrm{z}}
\newcommand{\e}{{\mathbf{1}}}
\title{Blur shift spaces}
\author{
\small{Tadeu Zavistanovicz Almeida}\\
\footnotesize{UFSC -- Department of Mathematics}\\
\footnotesize{88040-900 Florian\'{o}polis - SC, Brazil}\\
\footnotesize{\texttt{zavistanovicz@gmail.com}}
\and
\small{Marcelo Sobottka}\\
\footnotesize{UFSC -- Department of Mathematics}\\
\footnotesize{88040-900 Florian\'{o}polis - SC, Brazil}\\
\footnotesize{\texttt{sobottka@mtm.ufsc.br}}
}
\date{}
\begin{document}

\maketitle

\begin{abstract} 
In this work we propose a new type of shift spaces, called blur shift spaces, where one can represent with a single symbol an entire set of infinite symbols. Such shift spaces are constructed from classical shift spaces, by choosing some sets of infinitely many symbols that will be represented by a new symbol, and then defining a convenient topology. The shift spaces we are proposing generalize ideas presented previously by Ott, Tomforde and Willis \cite{Ott_et_Al2014}, and by Gonçalves and Royer \cite{GRISU}, which were used to find  isomorphism between $C^*$-algebras associated to some classes of shift spaces.
In particular, blur shifts can be used as a compactification scheme for classical shift spaces.
\end{abstract}

{\bf Keywords:} Symbolic dynamics; Cellular automata; Compactification of shift spaces

{\bf MSC2020:} 37B10; 37B15; 37B99


\section{Introduction}

Recently it was proposed two new types of shift spaces: Ott-Tomforde-Willis shift spaces \cite{Ott_et_Al2014} and Gonçalves-Royer ultragraph shift spaces \cite{GRISU}. Inspired by $C^*$-algebras and the boundary path space theories for graphs, an Ott-Tomforde-Willis shift space is, roughly speaking, obtained by adding a single new symbol to a given infinite alphabet and then defining an appropriate topology that turns any shift space over this alphabet into a compact space. Gonçalves-Royer ultragraph shift spaces are obtained by first fixing a classical shift space generated from the walks on an ultragraph, and then finding a (possibly infinite) set of new symbols which are added to the alphabet. Then by defining the correspondent neighborhood system of sequences with those new symbols, the authors find sufficient conditions under which the shift space becomes locally compact. Both, Ott-Tomforde-Willis shifts and Gonçalves-Royer ultragraph shifts were successfully used to find correspondence between the conjugacy of a subclass of Markovian shift spaces and the isomorphism of their $C^*$-algebras.

The key idea of both, Ott-Tomforde-Willis shifts and Gonçalves-Royer ultragraph shifts, is to use new symbols to represent  some sets of infinitely many symbols: in Ott-Tomforde-Willis shifts the entire original alphabet is represented by a single new symbol, while in Gonçalves-Royer ultragraph shift spaces many symbols are introduced to represent the so called minimal infinite emitters of a given countably infinite ultragraph. In spite of Ott-Tomforde-Willis shifts and Gonçalves-Royer ultragraph shifts sharing the same paradigm, the constructions given in \cite{Ott_et_Al2014} and \cite{GRISU}, respectively,  differ in a fundamental way: every Ott-Tomforde-Willis shift have the same construction which consists in first adding a new symbol to the alphabet and then defining its neighborhood, while each Gonçalves-Royer ultragraph shift space is ad-hoc constructed from a previously given classical ultragraph shift space.
 
One possible interpretation for the constructions of Ott-Tomforde-Willis shifts and Gonçalves-Royer ultragraph shifts is that the new symbols added to the alphabet are representing a failure in determining what symbol would follow in some sequence when there were infinitely many ones that could follow. This interpretation leads us to propose a construction that encompasses Ott-Tomforde-Willis shifts and Gonçalves-Royer ultragraph shifts constructions. The shift spaces we are proposing here will be called blur shift spaces, alluding to the fact that some subsets of infinitely many symbols can be seen as being blurred in a way that we cannot individualize what symbol would follow in some sequence. Differently than occur in Ott-Tomforde-Willis shifts and Gonçalves-Royer ultragraph shifts, we are free to choose the subsets of symbols that will be {\em blurred} in the alphabet, and so we can turn any classical shift space into a compact or locally compact new shift space.\\

In the subsections below we shall present the background needed to develop the blur shift spaces and the formal constructions of Ott-Tomforde-Willis and Gonçalves-Royer ultragraph shift spaces. In section \ref{Blur_shift}, inspired by the schemes proposed in \cite{GRISU} and \cite{Ott_et_Al2014}, we propose a general scheme for constructing shift spaces where is considered the possibility of representing undetermined symbols. In Section \ref{sec:topology} we present a zero-dimensional topology for blur shifts and its properties, with particular focuses on criteria for its metrizability and (local) compactness. In Section \ref{sec:GSBC_for_blur_shift} we characterize continuous shift-commuting maps and generalized sliding block codes.

\subsection{Background}

Let $\N$ denote the set of all non-negative integers. Given a nonempty set $\A$ (which is called an alphabet), we define the (one-sided) full shift on $\A$ as the set

$$\A^\N:=\{(x_i)_{i\in\N}:\ x_i\in\A\ \forall i\in\N\}.$$

Given a sequence $\x=(x_i)_{i\in\N}\in \A^\N$, for each $\ell,k\in\N$ with $\ell\leq k$ we will denote the finite word $(x_\ell\ldots x_k)\in \A^{k-\ell+1}$ as $\x_{[\ell,k]}$. We endow $\A$ with the discrete topology and $\A^\N$ with the associated prodiscrete topology. We recall that a basis for the topology of $\A^\N$ are the cylinders, those clopen sets defined for each given choice of letters $a_0,\ a_1,\ \ldots,\ a_{n-1}\in\A$ as

$$[a_0a_1\ldots a_{n-1}]:=\{(x_i)_{i\in\N}:\ x_{j}=a_j\ \forall j=0,\ldots,n-1\}.$$

Note that $\A^\N$ is compact if and only if $\A$ is finite. Moreover, when $\A$ is not finite, then $\A^\N$ is not even locally compact. In any case the topology is metrizable and cylinders are always clopen sets.

Consider on $\A^\N$ the {\bf shift map} $\s:\A^\N\to\A^\N$ given by

$$\s\big((x_i)_{i\in\N}\big)=(x_{i+1})_{i\in\N}.$$

Given $\mathrm{F}\subset \bigcup_{n\geq 1} \A^n$ (which we will call the {\bf set of forbidden words}), the {\bf shift space} defined by $\mathrm{F}$ is the set $X_\mathrm{F}:=\{\x\in\A^\N: \x_{[\ell,k]}\notin\mathrm{F},\ \forall \ell,k\in\N\}$. It is well known that $\Lambda\subset\A^\N$ is a shift space (for some $\mathrm{F}\subset\bigcup_{n\geq 1}\A^n$) if and only if it is closed with respect to the topology of $\A^\N$ and $\s$-invariant (that is, $\s(\Lambda)\subset\Lambda$). Let $\Lambda\subset\A^\N$ be a shift space, then by considering on $\Lambda$ the topology induced from $\A^\N$, that is, the topology whose basis elements are the sets $[a_0a_1\ldots a_{n-1}]_\Lambda:=[a_0a_1\ldots a_{n-1}]\cap\Lambda$, and by restricting $\s$ to $\Lambda$, the pair $(\Lambda,\s)$ is a topological dynamical system.

Define $B_0(\Lambda)=\{\epsilon\}$, where $\epsilon$ represents the empty word (that is, the identity of the free group over $\A$), and for $n\geq 1$ define $B_n(\Lambda)$ as the set of all {\bf words of length $n$} that appear in some sequence of $\Lambda$, that is,

\begin{equation}\label{eq:B_n} B_n(\Lambda):=\{\x_{[0,n-1]}\in\A^n:\ \x\in\Lambda\}.\end{equation}

The {\bf language} of $\Lambda$ will be \begin{equation}\label{eq:B} B(\Lambda):=\bigcup_{n\geq 0} B_n(\Lambda).\end{equation}
It is straightforward that $B_n(\A^\N)=\A^n$ for all $n\geq 1$. Given $\uu=(u_1\ldots u_m),\vv=(v_1\ldots v_n)\in B(\A^\N)$ we define the concatenation of $\uu$ and $\vv$ as $\uu\vv=(u_1\ldots u_mv_1\ldots v_n)\in B(\A^\N)$. We also notice that $B_1(\Lambda)$ is the set of all letters of $\A$ that appear in some sequence in $\Lambda$. In particular, $B_1(\A^\N)=\A$.

Given $\w\in B(\A^\N)$ we define the {\bf follower set} of $\w$ in $\Lambda$ as the set
\begin{equation}\label{eq:followerset}\F_\Lambda(\w):=\{a\in\A:\ \w a\in B(\Lambda)\}.\end{equation}

In an analogous way, we define the {\bf predecessor set} of $\w\in B(\A^\N)$ as the set $\Pp(\w):=\{a\in\A:\ a\w \in B(\Lambda)\}$. Given $A\subset B(\A^\N)$ we will denote $$\F_\Lambda(A)=\bigcup_{\w\in A}\F_\Lambda(\w)\qquad\text{and}\qquad \Pp_\Lambda(A)=\bigcup_{\w\in A}\Pp_\Lambda(\w).$$

Note that $\F_\Lambda(\epsilon)=\Pp_\Lambda(\epsilon)=B_1(\Lambda)$, and 
$\F_\Lambda(\w)$ is empty if and only if $\w\notin B(\Lambda)$, and thus $\F_\Lambda(A)=\F_\Lambda(A\cap B(\Lambda))$ and $\Pp_\Lambda(A)=\Pp_\Lambda(A\cap B(\Lambda))$.

\subsection{Ott-Tomforde-Willis shifts and Gonçalves-Royer ultragraph shifts}\label{subsec:OTW-ultragraph}

Suppose that $\A$ is a infinite alphabet (with the discrete topology) and $\A^\N$ is the correspondent full shift (with the product topology). The most classical approach to turn a shift space $\Lambda\subset\A^\N$ compact, is through the Alexandroff compactification scheme (also known as one-point compactification - see \cite{Munkres}). The Alexandroff compactification can be applied whenever $\Lambda$ is locally compact, and it consists of adding a new point ``$\infty$'' to $\Lambda$, whose open neighbourhood system will consist of complements of compact sets. Hence we have a new space $\bar\Lambda:=\Lambda\cup\{\infty\}$ and we extend the shift map $\s:\Lambda\to\Lambda$ continuously on $\bar\Lambda$ by defining $\s(\infty)=\infty$. We could intuitively consider the point $\infty$ as a failure in determining the symbol at any position among the infinite possible symbols that could appear at any position.\\

Another way to use Alexandroff compactification consists in applying it on the infinite alphabet $\A$. In this procedure, proposed in \cite{Ott_et_Al2014}, a new point $\infty$ is added to $\A$, and its open neighbourhood system is conformed by the complement of finite sets. Thus, we have a new alphabet $\bar\A:=\A\cup\{\infty\}$ and can consider shift spaces $\Lambda\subset \bar\A^\N$ with the correspondent product topology. In this scheme, we could intuitively think a sequence $(x_i)_{i\in\N}\in\bar\A^\N$ where $x_k=\infty$ for some $k$, as a failure in determining the symbol at position $k$ among the infinite possible ones. In most of the cases it becomes natural to assume that as long as we are not able to determine the symbol in the position $k$ we will not be able to determine the symbol in any position after $k$. Under such assumption, we can consider the equivalence relation $\sim$ in $\bar\A^\N$ given by $$(x_i)_{i\in\N}\sim(y_i)_{i\in\N}\in\bar\A^\N\qquad\Leftrightarrow\qquad\min\{j: x_j=\infty\}=\min\{j: y_j=\infty\}=:k,\ and\ x_i=y_i,\ \forall\ i<k,$$
and therefore define $\Sigma_\A:=\bar\A^\N_{/\sim}$ with the quotient topology. Such compactification scheme was proposed in \cite{Ott_et_Al2014} and studied in \cite{GR,GSS}.
The set $\Sigma_\A$ can be identified with the set $\{(x_i)_{i\in\N}\in\bar\A^\N:\ x_i=\infty\Rightarrow x_{i+1}=\infty\}$. Furthermore, the quotient topology has a clopen basis whose sets were called generalized cylinders.

This compactification scheme has the advantage that we can apply it directly to $\A^\N$ (instead of being applied only for locally compact shift spaces, as the previous one). However, the shift map $\s:\Sigma_\A\to\Sigma_\A$ will be continuous at all points but at the sequence $\oslash:=(\infty,\infty,\ldots)$.

An {\bf Ott-Tomforde-Willis shift space} of $\Sigma_\A$ is the closure in $\Sigma_\A$ of a shift space $\Lambda\subset\A^\N$. Equivalently, $X\subset\Sigma_\A$ will be a shift space if and only if it is: {\em (i)} closed; {\em (ii)} shift-invariant; and {\em (iii)} for  $a_0\ldots a_n\in B(X)\cap \A^{n+1}$ we have $(a_0\ldots a_n\infty\infty\ldots)\in X$ if and only if $\F_X(a_0\ldots a_n)$ is infinite, the so called infinite-extension property.

This compactification was used to prove that two Ott-Tomforde-Willis
edge shifts over countable alphabets (an special case of Markovian shifts) will have both, the groupoids and the $C^*$-algebras of their associated graphs, being isomorphic, whenever they are topologically conjugated through a map $\Phi$ such that $\Phi^{-1}(\oslash)=\{\oslash\}$.\\

\label{ultragraph_construction} In \cite{GRISU} it was proposed a scheme to turn locally compact a class of ultragraph shifts over an countably infinite alphabet, that is, shift spaces defined from walks on a class of countably infinite ultragraphs. More specifically, given an countably infinite alphabet $\A$, let $V$ be a nonempty set (the set of vertices), $ E:=\A$ (the set of edges), $s: E\to V$ (the source function), and $r: E\to 2^{V}$ (the range function, where $2^{V}$ stands for the power set of $V$). Therefore $\mathfrak{G}=(V, E,s,r)$ will be an ultragraph, and the ultragraph shift space defined from $\mathfrak{G}$ is the shift space $X_\mathfrak{G}\subset \A^\N$ given by $$X_\mathfrak{G}:=\{(x_i)_{i\in\N}\in\A^\N:\ s(x_{i+1})\in r(x_i)\ \forall i\in\N\}.$$
 We recall that the class of ultragraph shifts coincides with the class of Markov shifts \cite[Theorem 7.3]{Sobottka2020}.

 Once $X_\mathfrak{G}$ is given, the authors in \cite{GRISU} define:\\

\noindent $\mathcal{V}_0\subset 2^{V}$ as the smallest family of sets which contains $\{v\}$ for all $v\in {V}$, contains $r(e)$ for all $e\in E$, and which is closed under finite unions and intersections;\\

\noindent $\mathcal{V}_1:=\{H\in\mathcal{V}_0:\ |s^{-1}(H)|=\infty\}$;\\

\noindent $\mathcal{\hat V}\subset\mathcal{V}_1$ as the family of all sets of $\mathcal{V}_1$ which does not contain a proper subset which also belongs to $\mathcal{V}_1$.\\

\noindent Therefore the authors define the {\bf Gonçalves-Royer ultragraph shift space}
$$\Sigma_\mathfrak{G}:= X_\mathfrak{G}\cup\{(w_i)_{i\in\N}:\ \exists \ n\in\N\ s.\ t.\ w_0\ldots w_{n-1}\in B(X_\mathfrak{G}),\ w_i= H\ \forall \ i\geq n,\ where\ H\in\mathcal{\hat V}\ and\ H\subset r(w_{n-1}) \}.$$
Then, in $\Sigma_\mathfrak{G}$ it were defined generalized cylinders in an analogous way to in \cite{Ott_et_Al2014}, which form a clopen basis for a topology on $\Sigma_\mathfrak{G}$.
Therefore, the shift map $\s:\Sigma_\mathfrak{G}\to\Sigma_\mathfrak{G}$, which is defined on $\Sigma_\mathfrak{G}$ in the usual way, is always continuous out of the set $\{(HHH\ldots):\ H\in\mathcal{\hat V}\}$.

Note that, here again, a sequence $(w_0\ldots w_{n-1}HHH\ldots)$ can be intuitively interpreted as a failure in determining the symbol which comes after $w_{n-1}$ among infinitely many possible ones. However, in this scheme it is possible to specify the subset to which the first undetermined symbol belongs (that is, $(w_0\ldots w_{n-1}HHH\ldots)$ can be thought as an indication that the symbol after $w_{n-1}$ lies in $s^{-1}(H)$).

In \cite{GRISU} it was considered the RFUM condition on the ultragraphs, which implies the local compactness of the Gonçalves-Royer ultragraph shift. Then, it was proved that two ultragraphs  $\mathfrak{G}$ and $\mathfrak{H}$ without sinks, satisfying the RFUM condition, and whose respective Gonçalves-Royer ultragraph shifts are conjugate by a map $\Phi$ such that $\hat \V_\mathfrak{G}= \bigcup_{H\in\hat\V_\mathfrak{H}}\Phi^{-1}(H)$, will have associated ultragraph $C^*$-algebras being isomorphic.\\

\section{Blur shift spaces}\label{Blur_shift}

In this section we propose a general scheme to define a new type of  shift spaces, called here blur shift spaces, where some uncertainties are represented by special symbols. We start by adding new symbols to a given alphabet which will represent some uncertainties, and then we use this extended alphabet to define a shift space with an appropriate topology. Such construction is inspired by the schemes proposed in \cite{GRISU} and \cite{Ott_et_Al2014}, and generalizes them.

\bigskip

\hrule

\subsection*{\sc Construction}

\hrule

\bigskip

Let $\A$ be an alphabet.

\begin{description}

\item[\uline{Step 1}:] Let $\V\subset 2^\A$ be any family of subsets of $\A$ such that
$$H \in \V\qquad\Rightarrow\qquad |H|=\infty$$
and 
$$G,H\in \V\text{ and } G\neq H \qquad\Rightarrow\qquad |G\cap H|<\infty.$$

The sets in $\V$ will be said to be the {\bf blurred sets} of $\A$.\\

Label each $H\in \V$ with a symbol $\h$, and denote by $\tilde\V$ the set of all symbols used to label blurred sets.

\item[\uline{Step 2}:] Let $\bar\A:=\A\cup\tilde\V$;

\item[\uline{Step 3}:] Define the full shift $\bar\A^\N$ and  consider the equivalence relation $\sim$ in $\bar\A^\N$ given by $$(x_i)_{i\in\N}\sim(y_i)_{i\in\N}\in\bar\A^\N\qquad\Leftrightarrow\qquad\min\{j: x_j\in\tilde\V\}=\min\{j: y_j\in\tilde\V\}=:k,\ and\ x_i=y_i,\ \forall\ i\leq k.$$

Define $$\Sigma_{\A^\N}^\V:=\bar\A^\N_{/\sim}.$$

\end{description}

\hrule

\bigskip

We remark that, although there is a bijection between $\V$ and $\tilde\V$,  an element in $\V$ is a subset of $\A$ while an element in $\tilde\V$ is a symbol of $\bar\A$.
Furthermore, $\h\notin H$ for any $H\in\V$. 

Given $H\in\V$ we will denote $\bar H:=H\cup \{\h\}$ which is a subset of $\bar\A$ but not of $\A$, and $\h\in\bar H$. Define $$\VV:=\{\bar H:\ H\in \V\}.$$ Note that $\VV$ is a family of subsets of $\bar \A$ which also satisfies the properties imposed in {\em Step 1} on the family $\V$.

We also remark that, if $\x\in \bar\A^\N$ is such that $x_i\in\A$ for all $i\in\N$, then $[\x]$, the equivalence class of $\x$ in $\Sigma_{\A^\N}^{\V}$ contains only $\x$. In such a case we shall identify $[\x]$ with the point $\x$ itself. On the other hand, if  $\x\in \bar\A^\N$ is such that $x_i\in\tilde\V$ for some $i\in\N$, then $[\x]$ contains infinitely many points and to represent it we will pick $(y_i)_{i\in\N}\in[\x]$ such that $y_i=x_i$ for all $i<n:=\min\{i: x_i\in\tilde\V\}$ and $y_i=x_n=\h$ for $i\geq n$.
Thus, we are going to identify $$\Sigma_{\A^\N}^{\V}\equiv\{(x_i)_{i\in\N}\in\bar\A^\N:\ x_i=\h\in \tilde\V\Rightarrow x_{i+1}=\h\}=\A^\N\cup\{(x_0...x_{n-1}\h\h\h...):\ x_0...x_{n-1}\in B(\A^\N), \h\in\tilde\V\}.$$ Hence, we can define on it the shift map  $\s:\Sigma_{\A^\N}\to\Sigma_{\A^\N}$  in the usual way. Furthermore, given $\mathfrak{X}\subset \Sigma_{\A^\N}^{\V}$, consider the sets $B_n(\mathfrak{X})$ of all words with length $n\geq 0$ in $\mathfrak{X}$, the set $B(\mathfrak{X})$ of all finite words in $\mathfrak{X}$, and the follower sets $\F_{\mathfrak{X}}(a_0\ldots a_{n-1})$ for each $a_0\ldots a_{n-1}\in B(\mathfrak{X})$, as in \eqref{eq:B_n}, \eqref{eq:B} and \eqref{eq:followerset}, respectively. We notice that a word $a_0\ldots a_{n-1}$ stands for the empty word $\epsilon$ whenever $n\leq 0$.

\begin{defn}\label{defn:blur_shift}
The space $\Sigma_{\A^\N}^{\V}$ is the {\bf full blur shift space} of $\A^\N$ with resolution $\V$. We say that $\Lambda'\subset 
\Sigma_{\A^\N}^{\V}$ is a {\bf blur shift space}  with resolution $\V$ if  and only if  there exists  a shift space $\Lambda\subset \A^\N$ such that

\begin{enumerate}
\item  
$\Lambda=\{(x_n)_{n\in \N}\in\Lambda':\ x_n\in\A \ \forall n\in\N\};$

\item $(a_0\ldots a_{n-1}\h\h\ldots)\in \Lambda'$ for some $\h\in\tilde\V\quad \Longleftrightarrow\quad a_0\ldots a_{n-1}\in B(\Lambda)$ and $|\F_{\Lambda}(a_0\ldots a_{n-1})\cap H|=\infty$.
\begin{center}
($\Lambda'$ verifies the infinite-extension property)
\end{center}

\end{enumerate}

Under the above notations, we have that $\Lambda'$ is the blur shift space of $\Lambda$ with resolution $\V$, and denote $\Lambda'=\Sigma_\Lambda^{\V}$.

\end{defn}

Note that in the above construction if $\V=\emptyset$ (which always holds if $\A$ is finite), then $\Sigma_\Lambda^{\V}=\Lambda$, which means the maximum resolution for a blur shift. On the other hand, $\V=\{\A\}$ corresponds to the minimum resolution. \\

\begin{ex}\label{ex:blurshift_count_inf_intersc} Let $\{A_n\}_{n\in\N}$ be a disjoint family of countably infinite sets, where $A_0=\N^*:=\N\setminus\{0\}$. Consider the alphabet $\A:=\bigcup_{n\geq 0}A_n$ and, for each $k\geq 1$, define the set $H_k:=A_k\cup\{i\in A_0:\ i\leq k\}$. Define $\V:=\{H_k\}_{k\geq 1}$ which is an countably infinite family of countably infinite sets. Since for all $m,n\geq 1$ with $m<n$ we have $H_m\cap H_n=\{1,...,m\}$, it follows that $\Sigma_{\A^\N}^\V$ is a full blur shift. 
\end{ex} 

\begin{ex}\label{ex:blurshift_uncount_inf_intersc} Let $A_0:=\N^*$, and $\{A_n\}_{n\in\N^*}$ be a disjoint family of non-countable sets. Consider the alphabet $\A:=\bigcup_{n\geq 0}A_n$ and, for each integer $k\geq 1$, let $H_k$ be a countably infinite set such that $\{1,...,k\}\subset H_k\subset A_k\cup\{1,...,k\}$. Then, $\Sigma_{\A^\N}^\V$ with $\V:=\{H_k\}_{k\geq 1}$ is a full blur shift. 
\end{ex}  

\begin{ex}\label{ex:blurshift_uncount_no_intersc} Let $\A:=\mathbb{R}^+=[0,\infty)$ and, for each $\lambda\in[0,1)$, define the set $H_\lambda:=\{x\in\mathbb{R}^+:\ x:=\lambda+k,\ k\in\N\}$. Define $\V:=\{H_\lambda\}_{\lambda\in[0,1)}$ and so the full blur shift $\Sigma_{\A^\N}^\V$.  Note that $\V$ is a non-countable family of countable disjoint sets.
\end{ex}

\begin{ex}\label{ex:prime_shift} Let $\A:=\N$ and define the sets $H_p:=\{n\in\N:\ n\text{ is prime}\}$ and $H_c:=\A\setminus H_p$.  Hence $\V:=\{H_p, H_c\}$ is a finite family of blurred sets, and $\Sigma_{\A^\N}^\V$ is a full blur shift.
\end{ex} 

\begin{ex}\label{ex:blurshift_uncount_uncont} Let $\A:=\mathbb{R}\times\mathbb{R}$ and, for each $\lambda\in\mathbb{R}$ define the set $H_\lambda:=\{\lambda\}\times\mathbb{R}$. Thus $\V:=\{H_\lambda\}_{\lambda\in\mathbb{R}}$ is a non-countable family of non-countable disjoint sets, and $\Sigma_{\A^\N}^\V$ is a full blur shift.
\end{ex} 

\begin{ex}\label{ex:intersect-all_shift} Let $\A=\N$ and let $\{H_n\subset\A: n\in\N^*\}$ be a countable family of blurred sets. Let $H_0:=\{h_1,h_2,h_3,...\}$ be any set where $h_k\in H_k\setminus\bigcup_{i<k}H_i$, and consider $\V= \{H_n\}_{ n\geq 0}$ and the respective full blur shift $\Sigma_{\A^\N}^\V$. 
\end{ex} 

    Recall that $B_1(\Lambda)$ is the set  of symbols of $\A$ that appear in some sequence in $\Sigma_\Lambda^\V$.
Given a blur shift space $\Sigma_\Lambda^\V$, denote \begin{equation}\V_\Lambda:=\{H\in\V: \ |B_1(\Lambda)\cap H|=\infty \},\end{equation} which is the family of all blurred sets that appear in $\Sigma_\Lambda^\V$, and denote $\tilde\V_\Lambda:=\{\h:\ H\in\V_\Lambda\}$ and $\VV_\Lambda:=\{\bar H:\ H\in \V_\Lambda\}$. Furthermore, denote $\mathcal{L}_\infty^{\V}(\Lambda):=\Lambda$, and for any $n\in\N$ let \begin{equation}\mathcal{L}_n^{\V}(\Lambda):=\{(x_i)_{i\in\N}\in\Sigma_\Lambda^\V: x_n\in\tilde\V_\Lambda \text{ and } x_{n-1}\notin\tilde\V_\Lambda \},\end{equation}
and
\begin{equation}\label{eq:Ad_Lambda}\partial^\V\Lambda:=\bigcup_{n\in\N}\mathcal{L}_n^{\V}(\Lambda).\end{equation}
Note that under this notation it follows that $\Sigma_\Lambda^\V=\bigcup_{ n\in\N\cup\{\infty\}}\mathcal{L}_n^{\V}(\Lambda)=\Lambda\cup \partial^\V\Lambda$. Furthermore,  in general $\mathcal{L}_n^{\V}(\Lambda)\subsetneq\mathcal{L}_n^{\V}(\Sigma_{\A^\N})$ for any $ n\in\N\cup\{\infty\}$ (even when $B_1(\Lambda)=\A$). Finally, recall that $B_1(\Sigma_\Lambda^\V)=B_1(\Lambda)\cup\tilde\V_\Lambda$.

For simplicity of the notation, whenever the family $\V$ used to define the blur shift space is clear or off topic, we shall omit it and denote just  $\Sigma_\Lambda$, $ \mathcal{L}_n(\Lambda)$ and $\partial\Lambda$.\\

\begin{prop}\label{prop:shift_of_a_shift}
Let $\Sigma_\Lambda\subset \Sigma_{\A^\N}$ be a blur shift space. Then
\begin{enumerate}
\item\label{prop:shift_of_a_shift_i} $\s(\Sigma_\Lambda)\subset\Sigma_{\s(\Lambda)}\subset\Sigma_\Lambda$;

\item\label{prop:shift_of_a_shift_ii} $\s(\mathcal{L}_n)=\mathcal{L}_{n-1},\ \forall n\ge 1$ and $\s(\mathcal{L}_0)=\mathcal{L}_0$.

\end{enumerate}

\end{prop}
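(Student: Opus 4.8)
The plan is to run everything off the stratification $\Sigma_\Lambda=\Lambda\cup\bigcup_{n\ge 0}\mathcal{L}_n(\Lambda)$ recorded just before the proposition, using the single structural fact that $\s$ erases the $0$-th coordinate: it should carry $\Lambda$ into $\s(\Lambda)$, carry $\mathcal{L}_n(\Lambda)$ into the stratum of level $n-1$ for $n\ge 1$, and fix every point of $\mathcal{L}_0(\Lambda)$ (each of which is a $\s$-fixed point $(\h\h\ldots)$). Before touching the strata I would collect the elementary monotonicity facts that do the bookkeeping: for any word $\w$ and any letter $a$, $\F_\Lambda(a\w)\subseteq\F_\Lambda(\w)$ and $B_m(\s(\Lambda))\subseteq B_m(\Lambda)$ for every $m$; moreover $\F_{\s(\Lambda)}(\w)\subseteq\F_\Lambda(\w)$, while conversely $a\w\in B(\Lambda)$ implies $\F_\Lambda(a\w)\subseteq\F_{\s(\Lambda)}(\w)$ --- for the last one, given $c\in\F_\Lambda(a\w)$ pick $y\in\Lambda$ with $y_{[0,|\w|+1]}=a\w c$ and note that $\s(y)\in\s(\Lambda)$ realises $\w c$. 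One should also first note that $\s(\Lambda)$ (or, if that is not automatic in the ambient setting, its closure $\overline{\s(\Lambda)}$) is a shift space, so that $\Sigma_{\s(\Lambda)}$ is meaningful.

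For (i) I would check the two inclusions pointwise. For $\Sigma_{\s(\Lambda)}\subseteq\Sigma_\Lambda$: a point of $\Sigma_{\s(\Lambda)}$ with every coordinate in $\A$ lies in $\s(\Lambda)\subseteq\Lambda\subseteq\Sigma_\Lambda$ by $\s$-invariance of $\Lambda$; and a point $(a_0\ldots a_{n-1}\h\h\ldots)$ of $\Sigma_{\s(\Lambda)}$ has $a_0\ldots a_{n-1}\in B(\s(\Lambda))\subseteq B(\Lambda)$ and $\infty=|\F_{\s(\Lambda)}(a_0\ldots a_{n-1})\cap H|\le|\F_\Lambda(a_0\ldots a_{n-1})\cap H|$, so the infinite-extension property of Definition~\ref{defn:blur_shift} puts it in $\Sigma_\Lambda$. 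For $\s(\Sigma_\Lambda)\subseteq\Sigma_{\s(\Lambda)}$: if $\x\in\Lambda$ then $\s(\x)\in\s(\Lambda)\subseteq\Sigma_{\s(\Lambda)}$; if $\x=(a_0\ldots a_{n-1}\h\h\ldots)\in\mathcal{L}_n(\Lambda)$ with $n\ge 1$ then $\s(\x)=(a_1\ldots a_{n-1}\h\h\ldots)$, and the infinite-extension property for $\Sigma_\Lambda$ together with $\F_\Lambda(a_0\ldots a_{n-1})\subseteq\F_{\s(\Lambda)}(a_1\ldots a_{n-1})$ (the monotonicity fact, with $a=a_0$) gives the infinite-extension property for $\Sigma_{\s(\Lambda)}$; and if $\x=(\h\h\ldots)\in\mathcal{L}_0(\Lambda)$ then $\s(\x)=\x$ and one must still verify $(\h\h\ldots)\in\Sigma_{\s(\Lambda)}$, i.e.\ $|B_1(\s(\Lambda))\cap H|=\infty$, which is the first place where something more than bookkeeping is needed.

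For (ii), $\s(\mathcal{L}_0)=\mathcal{L}_0$ is immediate since those points are $\s$-fixed, and for $n\ge 1$ the inclusion $\s(\mathcal{L}_n)\subseteq\mathcal{L}_{n-1}$ is the ``drop $a_0$, apply monotonicity'' computation from (i), once one also checks that the shortened configuration $(a_1\ldots a_{n-1}\h\h\ldots)$ again lies in $\Sigma_\Lambda$, hence in $\mathcal{L}_{n-1}(\Lambda)$. The main obstacle, both for this reverse inclusion and for the $\mathcal{L}_0$ point of (i), is the ``lifting'' direction --- that $\s$ loses no boundary point --- and in every case it reduces to one lemma: \emph{if $\w\in B(\Lambda)$ and $\F_\Lambda(\w)\cap H$ is infinite, then $\w$ has a predecessor $a$ in $\Lambda$ (i.e.\ $a\w\in B(\Lambda)$) with $\F_\Lambda(a\w)\cap H$ still infinite.} Indeed, $\mathcal{L}_{n-1}\subseteq\s(\mathcal{L}_n)$ is this lemma applied with $\w=b_0\ldots b_{n-2}$ (then $(ab_0\ldots b_{n-2}\h\h\ldots)\in\mathcal{L}_n(\Lambda)$ maps onto the given point of $\mathcal{L}_{n-1}(\Lambda)$), and the missing step in (i) is the case $\w=\epsilon$. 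The natural attempt is to realise each $c\in\F_\Lambda(\w)\cap H$ by a point of $\Lambda$ containing $\w c$, and then study how these infinitely many followers distribute among the predecessors of $\w$, hoping one predecessor retains infinitely many; this is delicate precisely because over an infinite alphabet the followers could spread over infinitely many distinct predecessors each keeping only finitely many of them (and some $c$ might be realised only at position $0$, hence with no predecessor at all), so ruling this out would have to exploit the blur axioms on $\V$ (distinct blurred sets meet in a finite set) together with the closedness of $\Lambda$. I expect essentially all the content of the proposition to lie in establishing that lemma.
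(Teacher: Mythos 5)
Your positive steps coincide with everything the paper actually writes down. The paper's proof of part (i) is exactly your ``drop $a_0$ and use $\F_\Lambda(a_0y_1\ldots y_{n-1})\subseteq\F_{\s(\Lambda)}(y_1\ldots y_{n-1})$'' computation: it establishes $\s(\partial\Lambda)\subseteq\partial\s(\Lambda)$, hence $\s(\Sigma_\Lambda)\subseteq\Sigma_{\s(\Lambda)}$, declares $\Sigma_{\s(\Lambda)}\subseteq\Sigma_\Lambda$ direct, and dismisses part (ii) as ``straightforward''. So on the forward inclusions your proposal is complete and follows the same route, with more care than the paper.

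The gap you flag, however, is not one you can close: the lifting lemma you isolate is false, and with it the inclusion $\mathcal{L}_{n-1}\subseteq\s(\mathcal{L}_n)$. Take $\A=\N$, $\V=\{\N\}$ (so $H=\N$) and let $\Lambda$ be the shift of finite type with forbidden words $\mathrm{F}=\{(a,0,c)\in\A^3:\ c>a\}$. Then $\F_\Lambda(0)=\N$ is infinite, so $(0\h\h\ldots)\in\mathcal{L}_1(\Lambda)$; but every predecessor $a$ of $0$ has $\F_\Lambda(a0)=\{0,\ldots,a\}$ finite, so no point of $\mathcal{L}_2(\Lambda)$ maps onto $(0\h\h\ldots)$ --- the followers of $0$ spread over infinitely many predecessors, each retaining finitely many, exactly as you feared, and neither the axioms on $\V$ nor the closedness of $\Lambda$ prevents it. The paper's own example immediately after the proposition already witnesses the failure degenerately: there $\mathcal{L}_2(\Lambda)=\emptyset$ while $\mathcal{L}_1(\Lambda)=\{(a\tilde\A\tilde\A\ldots):a\in\A\}\neq\emptyset$, so $\s(\mathcal{L}_2)\neq\mathcal{L}_1$, and consistently that example computes $\s(\partial\Lambda)=\{(\tilde\A\tilde\A\ldots)\}$ rather than what equality in (ii) would force. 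The same phenomenon affects the $\mathcal{L}_0$ point of (i) that you singled out: for $\Lambda=\{(n\,0\,0\,0\ldots):n\in\N\}$ with $\V=\{\N\}$ one has $(\h\h\ldots)\in\Sigma_\Lambda$ but $B_1(\s(\Lambda))=\{0\}$, so $\s(\Sigma_\Lambda)\ni(\h\h\ldots)\notin\Sigma_{\s(\Lambda)}$. The provable content of the proposition is $\s(\mathcal{L}_n)\subseteq\mathcal{L}_{n-1}$ for $n\ge 1$, $\s(\mathcal{L}_0)=\mathcal{L}_0$, and the inclusions of (i) away from this degenerate $\mathcal{L}_0$ situation --- all of which your monotonicity computations already deliver; you should record those inclusions and the counterexamples rather than hunt for the lemma.
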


\begin{proof}\phantom\\
\begin{enumerate}
\item It is direct that $\Sigma_{\s(\Lambda)}\subset\Sigma_\Lambda$. Note that $(y_1...y_{n-1}\h\h\h...)\in\s(\partial\Lambda)$ means that there exists $y_0\in\A$ such that $(y_0y_1...y_{n-1}\h\h\h...)\in\partial\Lambda$\sloppy, that is, such that  $|\F_\Lambda (y_0y_1...y_{n-1})\cap H|=\infty$. Hence, it follows that $|\F_{\s(\Lambda)} (y_1...y_{n-1})\cap H|=\infty$, which means that  $(y_1...y_{n-1}\h\h\h...)\in\partial\s(\Lambda)$. Thus we have proved that $\s(\partial\Lambda)\subset\partial\s(\Lambda)$, we conclude that
$\s(\Sigma_\Lambda)=\s(\Lambda\cup\partial\Lambda)=\s(\Lambda)\cup\s(\partial\Lambda)\subset \s(\Lambda)\cup\partial\s(\Lambda)=\Sigma_{\s(\Lambda)}$.

\item It is straightforward.
\end{enumerate}

\end{proof}

The next example shows that in general $\Sigma_{\s(\Lambda)}\not\subset\s(\Sigma_\Lambda)$ even when $\s(\Lambda)=\Lambda$.

\begin{ex}
Let $\A$ be any infinite alphabet, and define $\V=\{\A\}$ (the Ott-Tomforde-Willis case). Consider $\Lambda:=\{(x_i)_{i\in\N}\subset\A^\N:\ x_{i+2}=x_i\ \forall i\in\N\}$. It follows that $\Sigma_\Lambda$ is such that $\partial\Lambda=\{(\tilde\A\tilde\A\tilde\A...),\ (a\tilde\A\tilde\A\tilde\A...):\ a\in\A\}$.
Therefore, from part \ref{prop:shift_of_a_shift_ii}. of Proposition \ref{prop:shift_of_a_shift}, that $\s(\Sigma_\Lambda)=\s(\Lambda)\cup \s(\partial\Lambda)=\s(\Lambda)\cup \{(\tilde\A\tilde\A\tilde\A...)\}$.

On the other hand, since $\s(\Lambda)=\Lambda$, we have $\Sigma_{\s(\Lambda)}=\Sigma_\Lambda= \Lambda\cup \partial\Lambda= \s(\Lambda)\cup\{(\tilde\A\tilde\A\tilde\A...),\ (a\tilde\A\tilde\A\tilde\A...):\ a\in\A\}$.
\end{ex}

\subsection{Graph presentation of blur shift spaces}\label{sec:graph}

We say that $\G=(V, E,s,r,L)$ is a directed labeled graph, if $V$ and $ E$ are nonempty sets (the set of vertexes and the set of edges, respectively) $s: E\to V$, $r: E\to V$ and $L: E\to\A$ are maps (the source map, the range map and the label map, respectively). So it is said that in $\G$ there is an edge $e\in E$ labeled as $a\in\A$ from the vertex $v_s\in V$ to the vertex $v_r\in V$ if and only if $s(e)=v_s$, $r(e)=v_r$ and $L(e)=a$.

A directed labeled graph $\G$ generates a shift $\Lambda_\G$ given by:

$$\Lambda_\G=\left\{\big(L(e_i)\big)_{i\geq 0}\in\A^\N:\ (e_i)_{i\in\N}\in E^\N,\ s(e_{i+1})=r(e_i)\ \forall i\geq 0\right\}.$$

In \cite{Sobottka2020} it was proved that for any shift space $\Lambda\subset \A^\N$ there exists a (possibly infinite) labeled directed graph $\G$ which represent $\Lambda$, that is,  such that $\Lambda=\Lambda_\G$. An analogous result can be obtained for blur shift spaces:

\begin{theo}\label{theo:blurgraph} Any blur shift space $\Sigma_\Lambda^\V\subset \bar \A^\N$ can be generated by a directed labeled graph $\bar \G$.
\end{theo}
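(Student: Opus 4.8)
The plan is to build $\bar{\G}$ by first invoking the cited result from \cite{Sobottka2020} to obtain a labeled directed graph $\G=(V,E,s,r,L)$ presenting the underlying classical shift $\Lambda$ (so $\Lambda=\Lambda_\G$), and then to enrich $\G$ with extra vertices and edges that generate exactly the boundary $\partial^\V\Lambda$. Concretely, for each blurred set $H\in\V_\Lambda$ (the only ones that appear, by the discussion preceding Proposition \ref{prop:shift_of_a_shift}) I would add a single new ``sink-type'' vertex $v_H$ carrying a loop $\ell_H$ labeled $\h$, i.e. $s(\ell_H)=r(\ell_H)=v_H$ and $L(\ell_H)=\h\in\tilde\V_\Lambda$. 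This loop alone generates the constant tail $(\h\h\h\ldots)$. The work is in wiring $v_H$ into the rest of $\G$ so that one can reach $v_H$ by a path whose label is a word $a_0\ldots a_{n-1}\in B(\Lambda)$ if and only if $|\F_\Lambda(a_0\ldots a_{n-1})\cap H|=\infty$, which is precisely the infinite-extension property demanded in Definition \ref{defn:blur_shift}(ii).

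The key step is therefore to identify, for each $H$, the set $W_H:=\{w\in B(\Lambda):\ |\F_\Lambda(w)\cap H|=\infty\}$ and realize it as the label-set of paths ending at a designated ``pre-$v_H$'' state. Here I would lean on the structure of the graph coming from \cite{Sobottka2020}: that construction is essentially a (generalized) Krieger/follower-set presentation, so vertices can be taken to correspond to follower sets $\F_\Lambda(w)$ (or to a refinement thereof), and two words with the same follower set end at the same vertex. If the presentation has this property, then $|\F_\Lambda(w)\cap H|=\infty$ depends only on the vertex $q$ reached by (a path labeled) $w$; call such a vertex \emph{$H$-infinite}. I would then add, for every edge $e\in E$ whose range $r(e)$ is $H$-infinite, a new edge $e_H$ with $s(e_H)=s(e)$, $L(e_H)=L(e)$, and $r(e_H)=v_H$. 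A path labeled $a_0\ldots a_{n-1}$ reaching $v_H$ then exists iff its last vertex $r(e_{n-1})$ is $H$-infinite iff $|\F_\Lambda(a_0\ldots a_{n-1})\cap H|=\infty$; appending the loop $\ell_H$ yields exactly the sequences $(a_0\ldots a_{n-1}\h\h\ldots)\in\partial^\V\Lambda$. One also must check the empty-word case $n=0$: if $|B_1(\Lambda)\cap H|=\infty$ then $(\h\h\ldots)$ must be in the shift, which is handled by additionally declaring $v_H$ reachable from the initial part of the graph (e.g.\ adding an edge labeled $\h$ from every source vertex of $\G$ to $v_H$ when $H\in\V_\Lambda$), or simply by noting that $\Lambda_{\bar\G}$ as a walk-space already contains $(\ell_H\ell_H\ldots)$.

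After the construction I would verify $\Lambda_{\bar\G}=\Sigma_\Lambda^\V$ by a double inclusion: every infinite walk in $\bar\G$ either never uses a $\h$-labeled edge — then it stays inside the copy of $\G$ and gives a point of $\Lambda$ — or it uses one, and since the only edges out of any $v_H$ are the loop $\ell_H$, once a walk enters $v_H$ it produces a constant $\h$-tail, so its label has the form $a_0\ldots a_{n-1}\h\h\ldots$ with $a_0\ldots a_{n-1}\in B(\Lambda)$ and $r$ of the last edge $H$-infinite; conversely every element of $\Lambda$ and of $\partial^\V\Lambda$ arises this way by the correspondence above. The main obstacle I anticipate is the one flagged in the middle paragraph: ensuring that ``$|\F_\Lambda(w)\cap H|$ is infinite'' is genuinely a vertex property in whatever presentation \cite{Sobottka2020} supplies. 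If the given presentation does not separate follower sets finely enough, I would first pass to its follower-set (Krieger-type) refinement — still a labeled graph presenting the same $\Lambda$ — before adding the blur vertices; checking that this refinement step is available and preserves $\Lambda=\Lambda_\G$ is the delicate point, everything else being bookkeeping.
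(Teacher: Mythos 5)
Your proposal is correct and follows essentially the same route as the paper: take the follower-set presentation of $\Lambda$ from \cite{Sobottka2020} (whose vertices are the sets $V(\w)$ with $\F_\Lambda(\w)\cap H=V(\w)\cap H$, so ``$H$-infiniteness'' is automatically a vertex property and the refinement you worry about is already built in), add one vertex per $H\in\V_\Lambda$ with a self-loop labeled $\h$, and connect a vertex $V(\vv)$ to it exactly when $|V(\vv)\cap H|=\infty$. The only (cosmetic) difference is that the paper labels the connecting edge itself with $\h$ rather than duplicating the last $\A$-labeled edge as you do; both yield the same set of label sequences.
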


\begin{proof}

Let $\G=(V, E,s,r,L)$ be the graph given in \cite[Theorem 1.1]{Sobottka2020} such that $\Lambda=\G_\Lambda$ where:  $V:=\{V(\w):\ \w\in B(\Lambda)\}$, where $V(\w):=\{\uu\in B(\Lambda):\ \w\uu\in B(\Lambda)\}$; $E:=\{e_{V(\vv)V(\vv a)}:\ \vv\in B(\Lambda),\ a\in \A,\text{ and } \vv a\in B(\Lambda)\}$; $s(e_{V(\vv)V(\vv a)})=V(\vv)$; $r(e_{V(\vv)V(\vv a)})=V(\vv a)$; and $L:E\to \A$ given by $L(e_{V(\vv)V(\vv a)})=a$. 
Note that for all $\vv\in B(\Lambda)$ we have $\F_\Lambda(\vv)=\{a\in\A:\ a\in V(\vv)\}$ and so for all $H\in\V$ it follows that $\F_\Lambda(\vv)\cap H =  V(\vv)\cap H$.\\

We build the graph $\bar \G=(\bar V, \bar E,\bar s,\bar r,\bar L)$ from $\G$ as follows:

\begin{itemize}

\item $\bar V:=V\cup\mathcal{L}_0^\V(\Lambda)$; 

\item $\bar E:=E\cup\{e_{\tilde H\tilde H}:\ \tilde H\in\mathcal{L}_0^\V(\Lambda)\}\cup\{e_{V(\vv)\tilde H}:\ \tilde H\in\mathcal{L}_0^\V(\Lambda)\text{ and }V(\vv)\in V\text{ are such that } |V(\vv)\cap H|=\infty\}$; 

\item For all $e_{ \mathbf{r}  \mathbf{s}}\in\bar E$ let $\bar s(e_{ \mathbf{r}  \mathbf{s}}):= \mathbf{r}$, $\bar r(e_{ \mathbf{r}  \mathbf{s}}):=  \mathbf{s}$, and  $\bar L(e_{ \mathbf{r}  \mathbf{s}}):=\left\{\begin{array}{lcl}a&\text{, if}&   \mathbf{s}=V(\vv a)\\ \tilde H&\text{, if}&   \mathbf{s}=\tilde H\end{array}\right.$

\end{itemize}

Hence, it is direct that $$\Lambda_{\bar \G}=\left\{\big(\bar L(e_{\mathbf{r}_i\mathbf{s}_i})\big)_{i\geq 0}\in\bar\A^\N:\ (e_{\mathbf{r}_i\mathbf{s}_i})_{i\in\N}\in \bar E^\N,\ \bar s(e_{\mathbf{r}_{i+1}\mathbf{s}_{i+1}})=\bar r(e_{\mathbf{r}_i\mathbf{s}_i})\ \forall i\geq 0\right\}=\Sigma_\Lambda^\V.$$

\end{proof}

\section{The topology of blur shift spaces}\label{sec:topology}

We consider on $\A$ the discrete topology, and on $\bar\A$ we consider the same open sets of $\A$ plus the sets $U\subset\bar\A$ that have the property that if $\h\in U$ then $H\setminus F\subset U$ for some finite $F\subset H$. Note that a basis for the topology on $\bar\A$ is the family of all singletons of $\A$ plus the sets of the form $\bar H\setminus F$ where $\bar H\in\VV$ and $F\subset H$ is finite.

On the full shift $\bar\A^\N$ we consider the product topology $\tau_{\bar\A^\N}$. The
basic open sets of  $\tau_{\bar\A^\N}$ are the cylinders, which can be written as follows: Let $S:=\{\bar H\setminus F:\ \bar H\in\VV\text{ and } F\subset H\text{ is finite}\}$, and for given  $n\ge 0$ and $a_0,..., a_{n-1}\in \A\cup S$ define a cylinder as the set
$$[a_0a_1\ldots a_{n-1}]:=\{(x_i)_{i\in\N}\in\bar\A^\N:\ \forall j=0,\ldots,n-1, x_{j}=a_j \text{ if } a_j\in\A\text{, and } x_{j}\in a_j\text{ if } a_j\in S\}.$$

 Finally, on $\Sigma_{\A^\N}$ we define the {\bf quotient topology} denoted as $\tau_{\Sigma_{\A^\N}}$. \\

Now we give a characterization of the quotient topology $\tau_{\Sigma_{\A^\N}}$. In particular, we shall prove that the quotient topology has a basis of clopen sets called {\bf generalized cylinders}, which are the sets defined for any $w_0 \dots w_{n-1} \in B(\A^\N)$, $\bar H\in\VV$, and $F \subset H$ a finite set, as

\begin{equation}\label{eq:Type 1 GC} Z(w_0\dots w_{n-1}):=\{\x \in \Sigma_{\A^\N}: x_i=w_i, 0\leq i \leq n-1\}\end{equation}

or

\begin{equation}\label{eq:Type 2 GC} Z(w_0\dots w_{n-1} \bar H,F):=\{\x \in \Sigma_{\A^\N}: x_i=w_i, 0\leq i \leq n-1, x_{n} \in \bar H \setminus F\}.\end{equation}

Note that $x_n \in \bar H\setminus F$ means that either $x_n = \h$  or $x_n \in H\setminus F$.
For simplicity of notation and further use, given $\alpha=(\alpha_0\ldots \alpha_{n-1})\in B(\A^\N)$ and $\bar H\in\VV$ we shall denote $Z(\alpha_0\ldots \alpha_{n-1} \bar H,F)=:Z(\alpha \bar H,F)$.  We will denote a generalized cylinder as  $Z(\alpha \bar H)$ whenever $F=\emptyset$, and as $Z(\bar H, F)$ whenever $\alpha= \epsilon$, the empty word (recall that $a_0\ldots a_{n-1}=\epsilon$ whenever $n\leq 0$). Furthermore, we shall consider $Z(\epsilon)=\Sigma_{\A^\N}$.\\

The relationship between cylinders of $\bar\A^\N$ and generalized cylinders of $\Sigma_{\A^\N}$ is given by the canonical projection map $q:\bar\A^\N\to\Sigma_{\A^\N}$ which maps $\tilde \x\in\bar\A^\N$ to its equivalence class $[\tilde \x]=\x\in\Sigma_{\A^\N}$. 
It is direct that 
\begin{equation}\label{eq:quotient_cylinders}\begin{array}{lcl}
q^{-1}\big(Z(\alpha)\big)&=&[\alpha_0...\alpha_{n-1}]\\\\
&\text{and}&\\\\
q^{-1}\big(Z(\alpha \bar H,F)\big)&=&[\alpha_0...\alpha_{n-1}(\bar H\setminus F)].
\end{array}
\end{equation}

\begin{prop} For any  $\alpha\in B(\A^\N)$, $\bar H\in\VV$, finite $F\subset H$, the generalized cylinders $Z(\alpha)$ and $Z(\alpha \bar H,F)$ are clopen sets of $\tau_{\Sigma_{\A^\N}}$. 
\end{prop}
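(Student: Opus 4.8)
The plan is to show that each generalized cylinder is both open and closed in the quotient topology $\tau_{\Sigma_{\A^\N}}$. By the definition of the quotient topology, a set $W\subset\Sigma_{\A^\N}$ is open (resp. closed) if and only if $q^{-1}(W)$ is open (resp. closed) in $\bar\A^\N$, where $q:\bar\A^\N\to\Sigma_{\A^\N}$ is the canonical projection. So the whole argument reduces to computing the preimages under $q$ and checking that they are clopen cylinders of the product topology $\tau_{\bar\A^\N}$.

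\textbf{Step 1: identify the preimages.} Using the identities \eqref{eq:quotient_cylinders}, we have $q^{-1}(Z(\alpha))=[\alpha_0\ldots\alpha_{n-1}]$ and $q^{-1}(Z(\alpha\bar H,F))=[\alpha_0\ldots\alpha_{n-1}(\bar H\setminus F)]$. These are cylinders of $\bar\A^\N$ in the sense defined just before the proposition, since each $\alpha_j\in\A$ and $\bar H\setminus F\in S$ (as $F\subset H$ is finite). I would double-check the edge cases: when $\alpha=\epsilon$ and $F=\emptyset$ the set $Z(\epsilon)=\Sigma_{\A^\N}$ with preimage $\bar\A^\N$, which is trivially clopen; when $\alpha=\epsilon$ but $F\neq\emptyset$ we get $q^{-1}(Z(\bar H,F))=[\bar H\setminus F]$, a single-coordinate cylinder, still fine.

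\textbf{Step 2: these cylinders are clopen in $\bar\A^\N$.} A cylinder $[a_0\ldots a_{n-1}]$ in $\bar\A^\N$ is open by definition of the product topology. For closedness, it suffices to know that each coordinate-constraint set $\{(x_i):x_j=a_j\}$ (when $a_j\in\A$) or $\{(x_i):x_j\in\bar H\setminus F\}$ (when the constraint is from $S$) is closed in $\bar\A^\N$, since the cylinder is a finite intersection of such sets. The first is clear because singletons of $\A$ are clopen in the topology we put on $\bar\A$ (every point of $\A$ is open, and its complement $\bar\A\setminus\{a\}$ contains no $\h$'s worth of trouble — it is open because it is $\bar\A$ minus a point of $\A$, and the defining property for openness at $\h$ is vacuously about removing finitely many elements of $H$, which is satisfied here). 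For the second, I claim $\bar H\setminus F$ is clopen in $\bar\A$: it is open by the stated basis description, and its complement $\bar\A\setminus(\bar H\setminus F)$ is also open — it contains all singletons of $\A\setminus(H\setminus F)$, and if it contains some $\tilde G$ with $G\neq H$ then, because $|G\cap H|<\infty$ by the Step 1 condition on $\V$, the set $G\setminus(G\cap H)\subset\bar\A\setminus(\bar H\setminus F)$ differs from $G$ by a finite set, verifying the neighborhood condition; and it cannot contain $\h$ itself. Hence the preimage is a finite intersection of clopen coordinate sets, so clopen.

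\textbf{Step 3: conclude.} Since $q^{-1}(Z(\alpha))$ and $q^{-1}(Z(\alpha\bar H,F))$ are both clopen in $\bar\A^\N$, the definition of the quotient topology gives that $Z(\alpha)$ and $Z(\alpha\bar H,F)$ are clopen in $\tau_{\Sigma_{\A^\N}}$. The main obstacle I anticipate is not any single deep step but rather being careful with the topology on $\bar\A$: specifically, verifying that $\bar H\setminus F$ has open complement requires genuinely using the hypothesis from Step 1 that distinct blurred sets meet in a finite set (so that every $\tilde G\neq\h$ sitting in the complement still has a cofinite-in-$G$ neighborhood inside the complement). One should also make sure that the product-topology cylinders as defined (allowing entries from $\A\cup S$ rather than arbitrary open sets) genuinely form a basis and that closedness passes through finite intersections — both routine but worth stating cleanly.
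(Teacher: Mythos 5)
Your proof is correct. The first half (openness of $Z(\alpha)$ and $Z(\alpha\bar H,F)$ via the identities \eqref{eq:quotient_cylinders} and the definition of the quotient topology) is exactly the paper's argument. For closedness you take a genuinely different route: the paper stays in the quotient and shows the complement of each generalized cylinder is open by exhibiting, around each point of the complement, an explicit generalized cylinder contained in it (taking $F'=H\cap G$ when the obstructing coordinate is a different blurred symbol $\tilde G$), whereas you push everything upstairs to $\bar\A^\N$, prove that the basis sets $\{a\}$ and $\bar H\setminus F$ are clopen in $\bar\A$ --- the latter using $|G\cap H|<\infty$ to produce the cofinite-in-$G$ neighborhood of each $\tilde G\neq\h$ in the complement --- conclude that the cylinders of $\bar\A^\N$ are clopen as finite intersections of clopen coordinate constraints, and then use the standard fact that the quotient topology detects closedness as well as openness through $q^{-1}$. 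Both arguments invoke the finite-intersection hypothesis on $\V$ at the same essential point. Your route is more uniform and arguably cleaner; the paper's version has the side benefit of writing the complement explicitly as a union of generalized cylinders, which anticipates Proposition \ref{prop:basis_top}. One small wording quibble: the openness of $\bar\A\setminus\{a\}$ at a symbol $\tilde G$ is not ``vacuous'' --- one still needs $G\setminus F\subset\bar\A\setminus\{a\}$ for some finite $F$, which holds with $F=G\cap\{a\}$ --- but this is immediate and does not affect the argument.
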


\begin{proof}

To prove that $Z(\alpha)$ and $Z(\alpha \bar H,F)$ are open sets of $\tau_{\bar\A^\N}$, 
consider the canonical projection map $q:\bar\A^\N\to\Sigma_{\A^\N}$. Recall that $U\in\tau_{\Sigma_{\A^\N}}$ if and only if $q^{-1}(U)\in\tau_{\bar\A^\N}$. Thus, from \eqref{eq:quotient_cylinders} we have that $q^{-1}\big(Z(\alpha)\big)$ and
$q^{-1}\big(Z(\alpha \bar H,F)\big)$ 
are both cylinders of $\tau_{\bar\A^\N}$. Then, $Z(\alpha)$ and $Z(\alpha \bar H,F)$ are open sets of $\tau_{\Sigma_{\A^\N}}$.\\

To prove that $Z(\alpha)$ and $Z(\alpha \bar H,F)$ are closed sets of $\tau_{\Sigma_{\A^\N}}$, we will show that their complementary are open sets. 

Given $\alpha=\alpha_0...\alpha_{n-1}\in B(\A^\N)$ and $\x\in Z(\alpha)^c$, we take $j:=min\{k: x_k\neq \alpha_k\}$. Therefore, if $x_j\in \A$, then $\x\in Z(x_0...x_j)\subset Z(\alpha)^c$, while if $x_j=\tilde G\in \tilde\V$, then  $\x\in Z(x_0...x_{j-1}\tilde G,\{\alpha_j\})\subset Z(\alpha)^c$.  Since  $\x$ is any point of 
$Z(\alpha)^c$ and the correspondent generalized cylinder $Z(x_0...x_j)$ or $Z(x_0...x_{j-1}\tilde G,\{\alpha_j\})$ is an open set of $\tau_{\Sigma_{\A^\N}}$, it means that 
$Z(\alpha)^c$ is open in $\tau_{\Sigma_{\A^\N}}$ and therefore $Z(\alpha)$ is closed.

Now, suppose $\x\in Z(\alpha \bar H,F)^c$. We have two possibilities:  $x_0...x_{n-1}\neq \alpha_0...\alpha_{n-1}$ or $x_n\notin \bar H\setminus F$.  If  $x_0...x_{n-1}\neq \alpha_0...\alpha_{n-1}$, then we have $\x\in Z(\alpha)^c\subset Z(\alpha \bar H,F)^c$ and we proceed as before to find a generalized cylinder which contains $\x$ and is contained in $Z(\alpha)^c$. If $x_n\notin \bar H\setminus F$ we have two sub cases:  $x_n\in\A$ or $x_n=\tilde G \in \tilde\V\setminus\{\tilde H\}$. If $x_n\in\A$, then $\x\in Z(x_0...x_{n-1}x_n)\subset Z(\alpha\bar H,F)^c$; If $x_n=\tilde G\neq \tilde H$, then we set $F'=H \cap G$ and it follows that $\x\in Z(\alpha \bar G,F')\subset Z(\alpha\bar H,F)^c$. Hence, we conclude that $Z(\alpha \bar H,F)^c$ is open in $\tau_{\Sigma_{\A^\N}}$, thus $Z(\alpha \bar H,F)$ is closed.

\end{proof}

The proposition below ensures that the cylinders form a basis for the quotient topology on $\Sigma_\Lambda.$

\begin{prop}\label{prop:basis_top} The collection $\mathfrak{B}:=\{Z(\alpha),\ Z(\alpha \bar H,F): \alpha \in B(\A^\N), F \subset \A \text{ finite }, \bar H\in\VV\}$ is a basis for the quotient topology $\tau_{\Sigma_{\A^\N}}$. 
\end{prop}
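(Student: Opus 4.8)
The plan is to verify the two standard conditions for a collection of open sets to be a basis: first that $\mathfrak{B}$ covers $\Sigma_{\A^\N}$, and second that for every $\x$ lying in the intersection of two members of $\mathfrak{B}$ there is a member of $\mathfrak{B}$ containing $\x$ and contained in that intersection. Since the previous proposition already establishes that every element of $\mathfrak{B}$ is (cl)open in $\tau_{\Sigma_{\A^\N}}$, what remains is purely combinatorial bookkeeping together with one genuine topological input, namely that $\mathfrak{B}$ actually generates the quotient topology and not merely a coarser one.

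For the covering property, note $Z(\epsilon)=\Sigma_{\A^\N}\in\mathfrak{B}$, so this is immediate. For the intersection property I would argue by cases on the ``types'' of the two generalized cylinders. Given $\x\in Z_1\cap Z_2$ with $Z_1,Z_2\in\mathfrak{B}$: if both are of type $Z(\alpha)$, say $Z(\alpha)$ and $Z(\beta)$ with $|\alpha|\le|\beta|$, then agreement of $\x$ with both forces $\alpha$ to be a prefix of $\beta$, so $Z_1\cap Z_2=Z(\beta)\in\mathfrak{B}$. If one is $Z(\alpha)$ and the other $Z(\beta\bar H,F)$, a prefix comparison again shows one cylinder is contained in the other (taking into account that the constraint $x_{|\beta|}\in\bar H\setminus F$ may or may not already be implied), and the intersection is the smaller one, which again lies in $\mathfrak{B}$ — here one may need to observe that if $\alpha$ extends strictly past position $|\beta|$ then the coordinate $\alpha_{|\beta|}$ of $\x$, being in $\A$, automatically satisfies $\alpha_{|\beta|}\in\bar H\setminus F$ (since $\x\in Z(\beta\bar H,F)$), so $Z(\alpha)\subset Z(\beta\bar H,F)$. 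The remaining case is two type-two cylinders $Z(\alpha\bar H,F)$ and $Z(\beta\bar G,F')$; if their ``determined parts'' $\alpha,\beta$ are prefix-comparable and the longer one strictly exceeds the other's constraint position, one reduces as above to the one-sided-containment situation; the only delicate subcase is when $|\alpha|=|\beta|$, forcing $\alpha=\beta$, and then $\x\in Z(\alpha\bar H,F)\cap Z(\alpha\bar G,F')$ requires $x_{|\alpha|}\in(\bar H\setminus F)\cap(\bar G\setminus F')$. If $x_{|\alpha|}\in\A$ this is a single letter $a$ and $Z(\alpha a)\in\mathfrak{B}$ works; if $x_{|\alpha|}=\tilde K\in\tilde\V$ then $\tilde K\in\bar H$ and $\tilde K\in\bar G$, which by the biunivocal correspondence and the hypothesis $\h\notin H'$ for $H'\ne H$ forces $H=K=G$, so the two cylinders have the same barred letter and $Z(\alpha\bar H,F\cup F')\in\mathfrak{B}$ is the desired neighbourhood (and is contained in both).

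The step I expect to be the main obstacle — or at least the one requiring care rather than routine casework — is showing that $\mathfrak{B}$ generates the \emph{full} quotient topology, i.e. that every $\tau_{\Sigma_{\A^\N}}$-open set is a union of members of $\mathfrak{B}$. For this I would take $U\in\tau_{\Sigma_{\A^\N}}$ and $\x\in U$; by definition of the quotient topology, $q^{-1}(U)$ is open in $\tau_{\bar\A^\N}$, so pick a basic cylinder of $\bar\A^\N$ inside $q^{-1}(U)$ containing a chosen representative $\tilde\x$ of $\x$. Using the explicit form of the cylinders of $\bar\A^\N$ recalled in the excerpt (coordinates in $\A$ or in sets $\bar H\setminus F\in S$) together with the identification $\Sigma_{\A^\N}\equiv\{(x_i): x_i\in\tilde\V\Rightarrow x_{i+1}\in\tilde\V\}$, one then checks that such a cylinder, when pushed forward by $q$, contains a generalized cylinder $Z(\alpha)$ or $Z(\alpha\bar H,F)$ around $\x$ that still lies in $U$: the point is that a $\bar\A^\N$-cylinder is determined by finitely many coordinates, only the first of which can legitimately be a ``blurred'' coordinate in $\Sigma_{\A^\N}$ (everything after a $\tilde\V$-symbol is forced), so the cylinder collapses under $q$ to one of the two shapes in $\mathfrak{B}$. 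Care is needed to handle the representative-choice correctly, using \eqref{eq:quotient_cylinders} to translate between $q^{-1}$ of a generalized cylinder and an honest $\bar\A^\N$-cylinder, and to ensure the finite set $F$ produced genuinely sits inside $H$. Once this is in place, $U=\bigcup_{\x\in U}(\text{chosen member of }\mathfrak{B}\ \text{around }\x)$, completing the proof.
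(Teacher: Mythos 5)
Your covering argument and the pairwise-intersection casework are fine (in particular the observation that two type-two cylinders over the same word meeting at a point with a $\tilde\V$-symbol at the constrained position must carry the same blurred set is correct, by the pairwise-finite-intersection hypothesis on $\V$). The problem is exactly at the step you yourself single out as the main obstacle, and your proposed treatment of it does not work. Let $\x=(x_0\ldots x_{n-1}\tilde G\tilde G\ldots)\in U$. The fiber $q^{-1}(\x)$ is the set of \emph{all} points of $\bar\A^\N$ that agree with $x_0\ldots x_{n-1}\tilde G$ on the first $n+1$ coordinates and have \emph{arbitrary} tails; the normalization ``everything after a $\tilde\V$-symbol is forced'' applies only to the chosen representatives inside $\Sigma_{\A^\N}$, not to points of $\bar\A^\N$. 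If you pick one representative $\tilde\x$ and one basic cylinder $[a_0\ldots a_m]\subset q^{-1}(U)$ around it, that cylinder constrains the coordinates $n+1,\ldots,m$ (to the values, or blurred sets, occurring in the tail of $\tilde\x$). The generalized cylinder you need, $Z(x_0\ldots x_{n-1}\bar G,F)$, has preimage $[x_0\ldots x_{n-1}(\bar G\setminus F)]$ by \eqref{eq:quotient_cylinders}, which leaves every coordinate past position $n$ completely free; it therefore contains points $(x_0\ldots x_{n-1}\,g\,t_{n+1}t_{n+2}\ldots)$ with $g\in G\setminus F$ an honest letter and an arbitrary tail. Such a point is \emph{not} in the fiber of $\x$, so saturation of $q^{-1}(U)$ gives no information about it, and it need not lie in your chosen cylinder $[a_0\ldots a_m]$. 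Consequently the containment $Z(x_0\ldots x_{n-1}\bar G,F)\subset U$ does not follow: the finite exceptional set you can extract from one representative depends on that representative's tail, and the entire content of the proposition is that a single finite $F$ works uniformly over all tails. Your argument never produces that uniformity.

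The paper's proof is genuinely different at this point: rather than pushing forward one cylinder, it argues by contradiction, assuming that $[x_0\ldots x_{n-1}(\bar G\setminus F)]\setminus q^{-1}(U)\neq\emptyset$ for every finite $F\subset G$, extracting a family of witnesses $\z^\ell\notin q^{-1}(U)$ whose $n$-th coordinates are pairwise distinct elements of $G$, and then claiming that $q(\z^\ell)\to\x$ to contradict the openness of $U$ at $\x$. That is a uniformity argument over the whole family of bad tails, not a local collapse of a single cylinder, so your route is not a reformulation of theirs. Be aware, moreover, that this step is delicate in its own right: deducing $q(\z^\ell)\to\x$ from the convergence of the first $n+1$ coordinates already presupposes that the generalized cylinders form a neighbourhood basis at $\x$ for the quotient topology, which is the statement under proof; when several blurred sets interact (e.g.\ the witnesses' $(n+1)$-th coordinates spread over infinitely many distinct blurred sets disjoint from $G$) one can construct saturated open sets containing $q^{-1}(\x)$ but no set of the form $[x_0\ldots x_{n-1}(\bar G\setminus F)]$. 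So the passage from ``open in the quotient'' to ``contains a generalized cylinder'' is the real mathematical content here and must be confronted directly; it cannot be obtained by the representative-plus-cylinder reduction you describe.
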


\begin{proof} Since we already proved that $\mathfrak{B}\subset \tau_{\Sigma_{\A^\N}}$, we just need to prove that for any given $U\in \tau_{\Sigma_{\A^\N}}$ and $\x\in U$, there exists $Z\in \mathfrak{B}$ such that $\x\in Z\subset U$. We remark $q^{-1}(U)$ is an open set of  $\tau_{\bar\A^\N}$, and if $\x\in U$, then $q^{-1}(U)$ contains all points of $q^{-1}(\x)$. 
 
In the case $\x\in U\cap\A^\N$ it follows that $q^{-1}(\x)=\x$ (recall that $\x$ stands for both, the point of $\bar \A^\N$ and its equivalence class in $\Sigma_{\A^\N}$). Since $q^{-1}(U)$ is an open set, there exists a cylinder $[x_0...x_n]$ in $\bar\A^\N$ such that $\x\in[x_0...x_n]\subset q^{-1}(U)$. Therefore $\x=q(\x)\in q([x_0...x_n])=Z(x_0...x_n)\subset U$.

In the case $\x\in U\cap\partial\A^\N$, say $\x\in U\cap\mathcal{L}_n(\A^\N)$ and $x_n=\tilde G$, it follows that $q^{-1}(\x)=\{\y\in\bar\A^\N:\ y_i=x_i\ \forall i\le n\}\subset q^{-1}(U)$. Now, by contradiction suppose that for all finite $F\subset G$ we have that the cylinder in $\bar\A^\N$, $[x_0...x_{n-1}\bar G\setminus F]\cap q^{-1}(U)^c\neq\emptyset$. Hence, take any $\z^0\in [x_0...x_{n-1}\bar G]\cap q^{-1}(U)^c$, and recursively, for each $\ell\geq 1$ take an arbitrary $\z^\ell\in [x_0...x_{n-1}\bar G\setminus\{\z^0,...,\z^{\ell-1}\}]\cap q^{-1}(U)^c$. Note that the sequence $(\z^\ell)_{\ell\in\N}\in\bar\A^\N$ is such that $z_0^\ell...z_{n-1}^\ell=x_0...x_{n-1}$ for all $\ell\in\N$, and for any finite $F\subset G$, there exists $L\in\N$ such that $z_n^\ell\in G\setminus F$ for all $\ell\geq L$. It means that the first $n+1$ coordinates of $\z^\ell$ are converging to $x_0...x_{n-1}\tilde G$ as $\ell$ goes to infinity, and so $q(\z^\ell)\to \x=(x_0...x_{n-1}\tilde G \tilde G \tilde G...)$ as $\ell \to\infty$. But $\z^\ell\notin q^{-1}(U)$ for all $\ell\in\N$, and so $q(\z^\ell)\notin U$, which is in contradiction to the fact that it converges to $\x$. Hence, shall exist a finite $F\subset G$ such that $[x_0...x_{n-1}\bar G\setminus F]\subset q^{-1}(U)$, and thus $Z(x_0...x_{n-1}\bar G, F)=q\big([x_0...x_{n-1}\bar G\setminus F]\big)\subset U$.

\end{proof}

Let us recall the convergence of sequences in the topology of a blur shift. 
Suppose a sequence $(\x^n)_{n\geq 1}\in \Sigma_{\A^N}$ converges to some $\bar \x$. If $\bar\x\in \A^\N$, then 
for all $k\in \N$ there is a $N\geq 1$ such that $x^n_0...x^n_k=\bar x_0...\bar x_k$ for all $n\geq N$. On the other hand, if $\bar\x\in \partial\A^\N$, say $\bar\x=(\bar x_0...\bar x_{k-1}\h\h\h...)$, then for any finite $F\subset H$ there is a 
$N\geq 1$ such that for all $n\geq N$ we have $x^n_0...x^n_{k-1}=\bar x_0...\bar x_{k-1}$ and $x^n_k\in \bar H\setminus F$.
We remark that though, in general, blur shift spaces are not first countable (see Proposition \ref{prop:first_countable}), they are always {\bf sequential spaces} (see Proposition \ref{prop:Frechet-Urysohn_space}). Thus, sequences suffice to determine the topology of blur shifts (see \cite{Franklin64}).\\

\begin{ex}\label{ex:OTW_shift} If $\A$ is any infinite set and $\V:=\{\A\}$, then the full blur shift $\Sigma_{\A^\N}^\V$ is the full Ott-Tomforde-Willis shift $\Sigma_\A$ defined in \cite{Ott_et_Al2014}. In fact, it is direct that $\Sigma_{\A^\N}^\V=\Sigma_\A$. Furthermore 
the basis given in Propostion \ref{prop:basis_top} for the topology of $\Sigma_{\A^\N}^\V$
 coincides with the basis given in \cite[Theorem 2.15]{Ott_et_Al2014} for  Ott-Tomforde-Willis shifts.

\end{ex} 

\begin{ex}\label{ex:ultragraph_shift}  Consider the construction of Gonçalves-Royer ultragraph shifts presented on page \pageref{ultragraph_construction} onwards. Let $\mathfrak{G}=({V}, E,s,r)$ be an ultragraph, and $\Lambda:=X_\mathfrak{G}\subset E^\N$ be the respective classical ultragraph shift. Recall that the family $\hat\V$ contains the sets of vertexes of $\mathfrak{G}$ that will represent the new symbols used in $\Sigma_\mathfrak{G}$. 
Note that each $A\in\hat\V$ can be associated to the infinite set of symbols $H_A:=s^{-1}(A)\subset E$. Since given $A,B\in\hat\V$ such that $A\neq B$ it follows that $A\cap B$ does not belong to $\hat\V$, it implies that $H_A\cap H_B$ is finite. Hence $\V:=\{H_A\}_{A\in\hat\V}$ is a family of blurred sets and then by labeling each $H_A$ with the symbol $A$, we get $\tilde\V=\hat \V$, and  $\Sigma_\mathfrak{G}=\Sigma_{\Lambda}^\V$. To conclude that  Gonçalves-Royer ultragraph shift spaces are blur shifts just observe that the basis of the topology in $\Sigma_{\Lambda}^\V$ (Proposition \ref{prop:basis_top}) coincides with the basis given in \cite[Proposition 3.4]{GRISU} for Gonçalves-Royer ultragraph shifts.\\

Observation: We notice that due to the construction of $\hat\V$ and the fact that the ultragraph has countably many vertexes and edges, we have here $\V$ being a countable family of countable sets. 
\end{ex} 

Now, we shall prove several topological properties of blur shift spaces.

\begin{prop}\label{prop:Hausdorff} The full blur shift $\Sigma_{\A^\N}$ with the topology $\tau_{\Sigma_{\A^\N}}$ is a Hausdorff topological space. 
\end{prop}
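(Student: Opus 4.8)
The plan is to show that any two distinct points $\x,\y \in \Sigma_{\A^\N}$ can be separated by disjoint generalized cylinders, using the fact (Proposition \ref{prop:basis_top}) that the sets $Z(\alpha)$ and $Z(\alpha\bar H,F)$ form a clopen basis. Since these basis sets are clopen, once I find a basic open $Z$ with $\x \in Z$ and $\y \notin Z$, the pair $(Z, Z^c)$ already separates them — so in fact it suffices to produce, for each pair $\x \neq \y$, a single generalized cylinder containing exactly one of them. I would organize the argument by cases according to where the first disagreement between $\x$ and $\y$ occurs and whether the disagreeing coordinates lie in $\A$ or in $\tilde\V$.

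First I would let $k := \min\{i : x_i \neq y_i\}$, which is well-defined since $\x \neq \y$ and (under the identification $\Sigma_{\A^\N} \equiv \{(x_i) : x_i = \h \Rightarrow x_{i+1} = \h\}$) distinct equivalence classes differ in some coordinate. I split into three cases for the coordinate $k$. \textbf{Case 1:} $x_k \in \A$ and $y_k \in \A$ (with $x_k \neq y_k$). Then $\x \in Z(x_0\ldots x_k)$ and $\y \in Z(y_0\ldots y_k)$, and these two cylinders are disjoint since a sequence cannot have two different letters in coordinate $k$. \textbf{Case 2:} exactly one of $x_k, y_k$ lies in $\tilde\V$ — say $x_k = \tilde G \in \tilde\V$ and $y_k \in \A$. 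Then $\x \in Z(x_0\ldots x_{k-1}\bar G, \{y_k\})$ (this is legitimate: the finite set $\{y_k\}$ need not be a subset of $G$, but we can intersect with $G$, i.e.\ use $F' := \{y_k\} \cap G$, and in any case $\tilde G \in \bar G \setminus F'$ so $\x$ is in it) while $\y$ is not, since $y_k \notin \bar G \setminus F'$ requires $y_k \neq \tilde G$ and $y_k \notin G \setminus F'$; if $y_k \in G$ then $y_k \in F'$ and we are fine, if $y_k \notin G$ then also fine. \textbf{Case 3:} $x_k = \tilde G$ and $y_k = \tilde H$ with $\tilde G \neq \tilde H$, i.e.\ $G \neq H$ in $\V$. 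By the Step 1 hypothesis, $|G \cap H| < \infty$, so setting $F := G \cap H$ (a finite subset of $G$), we have $\x \in Z(x_0\ldots x_{k-1}\bar G, F)$ because $\tilde G \in \bar G \setminus F$, whereas $\y \notin Z(x_0\ldots x_{k-1}\bar G, F)$: indeed $y_k = \tilde H \neq \tilde G$ and $y_k$ is the symbol $\tilde H$, not an element of $H \subset \A$, so $y_k \notin G \setminus F$ either. In every case we have produced a clopen basic set containing $\x$ but not $\y$; its complement is open and contains $\y$, so the two points are separated by disjoint open sets.

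The only subtlety — and the place that needs care rather than being a genuine obstacle — is the bookkeeping in Cases 2 and 3 around finite sets $F$ that might not be subsets of the relevant blurred set $H$; the clean fix is always to replace a candidate finite set $F$ by $F \cap H$, which preserves membership of $\tilde H$ in $\bar H \setminus (F \cap H)$ and only enlarges the cylinder, and in Case 3 the crucial point is precisely that $G \cap H$ is finite so it is an admissible choice of $F$. No compactness, metrizability, or first-countability is needed; the argument is purely combinatorial on coordinates, and essentially reuses the complementation computations already carried out in the proof that generalized cylinders are closed.
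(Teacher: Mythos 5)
Your proof is correct and follows essentially the same strategy as the paper's: in each case the separating generalized cylinders are the ones the authors themselves use (in particular the finite sets $\{y_k\}$ when one coordinate is a letter and the other a blur symbol, and $G\cap H$ when both are distinct blur symbols), the only cosmetic differences being that you organize the cases by the first coordinate of disagreement rather than by which stratum each point lies in, and that you invoke clopen-ness to exhibit a single basic set instead of two explicitly disjoint cylinders. Your explicit replacement of a candidate finite set $F$ by $F\cap G$ is a small gain in rigor over the paper's slightly informal $Z(y_0\ldots y_{n-1}\bar H,\{x_n\})$.
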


\begin{proof}

To check that $\Sigma_{\A^\N}$ is Hausdorff, note that given two distinct points $\x,\y\in\Sigma_{\A^\N}$ we have three cases: $\x,\y\in\A^\N$; $\x\in\A^\N$ and $\y\in\partial\A^\N$; or $\x,\y\in\partial\A^\N$. If $\x,\y\in\A^\N$ just find $k\in\N$ such that $x_k\neq y_k$ and consider the generalized cylinders $Z(x_0...x_k)$ and $Z(y_0...y_k)$. If 
$\x\in\A^\N$ and $\y\in\partial\A^\N$, say $\y=(y_0...y_{n-1}\h\h\h...)$, we just need to take the
generalized cylinders $Z(x_0...x_n)$ and $Z(y_0...y_{n-1}\bar H,\{x_n\})$. Finally, in the case 
$\x,\y\in\partial\A^\N$, say $\x=(x_0...x_{m-1}\tilde G \tilde G \tilde G...)$ and $\y=(y_0...y_{n-1}\h\h\h...)$, if $m\neq n$ we can, without loss of generality, suppose $m<n$ and then we consider the
generalized cylinders $Z(x_0...x_{m-1}\bar G,\{y_m\})$ and $Z(y_0...y_m)$, while if $m=n$ we take the
generalized cylinders $Z(x_0...x_{m-1}\bar G)$ and $Z(y_0...y_{n-1}\bar H,G\cap H)$.
\end{proof}

\begin{prop}\label{prop:Regular} The full blur shift $\Sigma_{\A^\N}$ with the topology $\tau_{\Sigma_{\A^\N}}$ is a regular topological space. 
\end{prop}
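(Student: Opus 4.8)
The plan is to show that $\Sigma_{\A^\N}$ is regular by exhibiting, for every point $\x$ and every basic open neighbourhood, a smaller neighbourhood whose closure is still contained in it; since we have already established in Proposition \ref{prop:basis_top} that the generalized cylinders $Z(\alpha)$ and $Z(\alpha\bar H,F)$ form a basis, and since the earlier proposition showed that each generalized cylinder is in fact \emph{clopen}, this is nearly immediate. Concretely, I would recall that regularity is equivalent to the existence of a basis of closed neighbourhoods at each point (together with the Hausdorff property, which is Proposition \ref{prop:Hausdorff}), and then observe that $\mathfrak{B}$ itself is such a basis: given $\x\in U$ open, pick $Z\in\mathfrak{B}$ with $\x\in Z\subset U$; since $Z$ is clopen, $\overline{Z}=Z\subset U$, so $Z$ is a closed neighbourhood of $\x$ inside $U$.

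To make the argument fully self-contained in the language of separating a point from a closed set, I would phrase it as follows. Let $C\subset\Sigma_{\A^\N}$ be closed and $\x\notin C$. Then $\x\in C^c$, which is open, so by Proposition \ref{prop:basis_top} there is a generalized cylinder $Z\in\mathfrak{B}$ with $\x\in Z\subset C^c$. Then $Z$ and $Z^c$ are the required disjoint open sets: $\x\in Z$, $C\subset Z^c$, and $Z\cap Z^c=\emptyset$. The fact that $Z^c$ is open is exactly the clopenness of $Z$ proved in the previous proposition, so no further work is needed.

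There is essentially no obstacle here: the content of the proposition was already packed into the preceding result that generalized cylinders are clopen. The only point requiring a word of care is the standard subtlety that "regular" sometimes does and sometimes does not include the $T_1$/Hausdorff axiom in the naming convention; since Proposition \ref{prop:Hausdorff} establishes Hausdorffness separately, the present statement can be read either way and the clopen-basis argument suffices in both. I would therefore keep the proof to two or three sentences, citing Propositions \ref{prop:Hausdorff} and \ref{prop:basis_top} and the clopenness proposition.
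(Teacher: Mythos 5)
Your proof is correct, but it takes a genuinely different (and shorter) route than the paper. The paper separates $\y$ from a closed set $C$ by hand: it first shows, via a sequence argument using the closedness of $C$, that there is an $m$ with $\x_{[0,m]}\neq\y_{[0,m]}$ for all $\x\in C$ (or, when $\y\in\partial\A^\N$, that only finitely many symbols $g\in G$ occur as $x_m$ among points of $C$ agreeing with $\y$ up to $m-1$), takes $A$ to be the corresponding generalized cylinder around $\y$, and then builds $B$ as a union $\bigcup_{\x\in C}B_\x$ of cylinders chosen case by case. Your argument instead invokes the standard fact that a space with a basis of clopen sets is regular: pick $Z\in\mathfrak{B}$ with $\x\in Z\subset C^c$ and use the pair $Z$, $Z^c$. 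This is valid, since clopenness of the generalized cylinders and the basis property are both established before this proposition, and it collapses the proof to a few lines; note that the paper's construction of $A$ is in effect re-deriving the basis property inside $C^c$. What the paper's longer version buys is an explicit description of the separating neighbourhoods (useful if one wants to reuse the concrete cylinders later); what yours buys is the observation that regularity -- indeed zero-dimensionality -- is automatic once the clopen basis is in place. Your remark about the $T_1$ convention is appropriately handled by citing Proposition \ref{prop:Hausdorff}.
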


\begin{proof}

Let $C\subset\Sigma_{\A^\N}$ be a closed set and $\y\in\Sigma_{\A^\N}$ a point not belonging to $C$. We need to find two disjoint open sets $A,B\in \tau_{\Sigma_{\A^\N}}$ such that $\y\in A$ and $C\subset B$.\\

Note that, if $\y\in\A^\N$, then there exists $m\geq 0$ such that for all $\x\in C$, we have $\x_{[0,m]}\neq \y_{[0,m]}$. In fact, if it would not hold, that is, if for each $\ell\geq 0$ there would exist $\x^\ell\in C$ such that $\x^\ell_{[0,\ell]}=\y_{[0,\ell]}$, then we would have $\x^\ell\to \y$ as $\ell\to\infty$, which is not possible since $C$ is closed and $\y\notin C$. In this case, we define $A:=Z(\y_{[0,m]})$.\\

On the other hand, if $\y=(y_0...y_{m-1}\tilde G\tilde G\tilde G...)\in\partial\A^\N$, then it follows that the set $F:=\{g\in G:\ \exists \x\in C,\text{ with } x_0...x_{m-1}x_m=y_0...y_{m-1}g\}$ is finite. In fact, if such set was not finite, then for each $\ell\geq 1$ we could take a distinct $g^\ell\in F$ and the respective sequence  $(\x^\ell)_{\ell\geq 1}\in C$ where $x^\ell_0...x^\ell_{m-1}x^\ell_m=y_0...y_{m-1}g^\ell$. Therefore, it would follows again that $x^\ell\to \y$ which is not possible due to the fact that $C$ is closed and $\y$ is not in $C$. In this case, we define $A:=Z(y_0...y_{m-1}\bar G,F)$.\\

By considering that $\y=(y_0y_1y_2...)\in\A^\N$ or $\y=(y_0...y_{m-1}\tilde G\tilde G\tilde G...)\in\partial\A^\N$, for each $\x\in C$ we define
$$B_\x:=\left\{\begin{array}{lcl}
Z(\x_{[0,k]}) &\text{, if} & x_k\in\A\text{ and } x_k\neq y_k,\text{ for some } 0\leq k\leq m,\\\\
Z(y_0...y_{k-1} \bar H,\{y_k\}) &\text{, if} &  \x=(y_0...y_{k-1} \h\h\h...)\text{ for some } 0\leq k\le m-1,\\\\
Z(y_0...y_{m-1} \bar H,\{y_m\}) &\text{, if} &  \x=(y_0...y_{m-1} \h\h\h...)\text{ and }\y\in\A^\N,\\\\
Z(y_0...y_{m-1} \bar H,H\cap G)  &\text{, if} &  \x=(y_0...y_{m-1} \h\h\h...)\text{ and }\y\in\partial\A^\N.
\end{array}\right.
$$
Hence, defining $B:=\bigcup_{\x\in C}B_\x$ we have that $A$ and $B$ are two open sets that separate $C$ and $\x$.\\

\end{proof}

Given a blur shift space $\Sigma_\Lambda\subset\Sigma_{\A^\N}$, we define on $\Sigma_\Lambda$ the topology $\tau_{\Sigma_\Lambda}$ induced from $\tau_{\Sigma_{\A^\N}}$. Thus, the basic open sets of $\Sigma_\Lambda$ are generalized cylinders in the form $Z_\Lambda(\alpha):=Z(\alpha)\cap \Sigma_\Lambda$ and $Z_\Lambda(\alpha\bar H,F):=Z(\alpha\bar H,F)\cap \Sigma_\Lambda$ for $\alpha\in B(\Lambda)$, $\bar H\in\VV_\Lambda$ and $F\subset H$ finite. \\

The next proposition shows that blur shift spaces, with the topology of generalized cylinders, are always {\bf Fréchet-Urysohn spaces} and so they are a sequential space\footnote{We refer the reader to \cite{Franklin64} and \cite[Section 1.8]{Arkhangelskii _Pontryagin} for more details about Fréchet-Urysohn spaces and sequential spaces.}:\\
 
\begin{prop}\label{prop:Frechet-Urysohn_space} Any blur shift $\Sigma_\Lambda\subset \Sigma_{\A^\N}$ is a Fréchet-Urysohn space.
\end{prop} 

\begin{proof}
Given a $A\subset \Sigma_\Lambda$, denote its closure in the topology $\tau_{\Sigma_\Lambda}$ as $\bar A$, and define its sequential closure as the set 
$$[A]_{seq}:=\{\x\in\Sigma_\Lambda:\ \exists (\y^n)_{n\geq 0}\in A\text{ s.t. } \y^n\to\x\text{ as } n\to\infty\}.$$
We need to prove that for all $A\subset \Sigma_\Lambda$ we have $\bar A\setminus A \subset[A]_{seq}$.\\

Let $\x\in \bar A\setminus A$. We shall consider two cases separately: $\x\in \Lambda$; and $\x\in \partial\Lambda$.  If $\x\in \Lambda$, then for every $n\geq 0$ the generalized cylinder $Z_\Lambda(x_0...x_n)$ intersects $A$. Hence, we can take $\y^n\in Z_\Lambda(x_0...x_n)\cap A$, and since $Z_\Lambda(x_0...x_n)\supset Z_\Lambda(x_0...x_{n+1})$ for all $n\geq 0$, it follows that $(\y^n)_{n\geq 0}$ is a sequence in $A$ converging to $\x$. Thus $\x\in [A]_{seq}$. 

If $\x\in \partial\Lambda$, say $\x=(x_0...x_{m-1}\h\h\h...)$, then for any finite $F\subset H$ we have that the generalized cylinder $Z(x_0...x_{m-1}\bar H,F)$ intersects $A$. We will construct a sequence in $A$ as follows: Choose an arbitrary $\y^0\in Z(x_0...x_{m-1}\bar H)\cap A$; for each $n\geq 1$ we will chose any point $\y^n\in Z(x_0...x_{m-1}\bar H,\{y^0_m,...,y^{n-1}_m\})\cap A$. Observe that each $\y^n$ is such that $y^n_0...y^n_{m-1}=x_0...x_{m-1}$ and $y^n_m\in H$. Furthermore $y^s_m\neq y^t_m$ for all $s\neq t$. Thus, for any finite $F\subset H$ the generalized cylinder  $Z(x_0...x_{m-1}\bar H,F)$ contains all but a finite number of terms of $(\y^n)_{n\geq 0}$, which means that $(\y^n)_{n\geq 0}$ converges to $\x\in [A]_{seq}$.

\end{proof}

The next theorem gives an alternative definition for blur shift spaces, and its proof is direct.

\begin{theo}\label{theo:blur_shift=closure_of_a_shift}
A set $\Lambda'\subset 
\Sigma_{\A^\N}^{\V}$ is a blur shift space with resolution $\V$ if  and only if  there exists  a shift space $\Lambda\subset \A^\N$ such that $\Lambda'$ is the closure of $\Lambda$ with respect to the topology $\tau_{\Sigma_{\A^\N}^\V}$.
\end{theo}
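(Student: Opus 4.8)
The plan is to prove both implications by directly comparing the infinite-extension property from Definition \ref{defn:blur_shift} with membership in the closure $\overline{\Lambda}$ taken with respect to $\tau_{\Sigma_{\A^\N}^\V}$, using the explicit description of convergence recalled after Proposition \ref{prop:basis_top}. Throughout I would fix a shift space $\Lambda\subset\A^\N$ and write $\overline{\Lambda}$ for its closure in $\Sigma_{\A^\N}^\V$. Note first that $\Lambda$ itself, viewed inside $\Sigma_{\A^\N}^\V$ via the identification of $[\x]$ with $\x$ when $\x\in\A^\N$, consists precisely of those points of $\overline\Lambda$ all of whose coordinates lie in $\A$; this handles condition (i) of Definition \ref{defn:blur_shift} automatically once we know that the only new points of $\overline\Lambda$ lie in $\partial^\V\A^\N$ (which is immediate since $\Sigma_{\A^\N}^\V=\A^\N\cup\partial^\V\A^\N$, and a point of $\A^\N\setminus\Lambda$ has an open generalized cylinder neighbourhood missing $\Lambda$ because $\Lambda$ is closed in $\A^\N$).

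For the forward implication, suppose $\Lambda'$ is a blur shift space with associated shift $\Lambda$. I would show $\Lambda'=\overline\Lambda$ by double inclusion. For $\overline\Lambda\subset\Lambda'$: since $\Lambda'$ is closed in $\Sigma_{\A^\N}^\V$ (this was established — generalized cylinders are clopen, and one checks $\Lambda'$ is closed, e.g. via the infinite-extension characterization making $\partial^\V\Lambda$ a relatively closed piece; alternatively this is part of the earlier development) and $\Lambda\subset\Lambda'$, we get $\overline\Lambda\subset\Lambda'$. For $\Lambda'\subset\overline\Lambda$: points of $\Lambda$ are trivially in $\overline\Lambda$; for a point $\x=(a_0\ldots a_{n-1}\h\h\ldots)\in\partial^\V\Lambda$, condition (ii) gives $|\F_\Lambda(a_0\ldots a_{n-1})\cap H|=\infty$, so picking distinct symbols $b_0,b_1,\ldots\in\F_\Lambda(a_0\ldots a_{n-1})\cap H$ and, for each $j$, a point $\y^j\in\Lambda$ with $\y^j_{[0,n-1]}=a_0\ldots a_{n-1}$ and $\y^j_n=b_j$, the sequence $(\y^j)$ converges to $\x$ by the convergence criterion (eventually it lies in $Z(a_0\ldots a_{n-1}\bar H,F)$ for every finite $F\subset H$). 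Hence $\x\in\overline\Lambda$.

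For the converse, suppose $\Lambda'=\overline\Lambda$ for some shift space $\Lambda\subset\A^\N$; I must verify (i) and (ii). Condition (i) is the remark above about coordinates in $\A$. For (ii), the direction "$\Rightarrow$" is the contrapositive of the argument just given reversed: if $(a_0\ldots a_{n-1}\h\h\ldots)\in\overline\Lambda$, there is a sequence in $\Lambda$ converging to it, which forces $a_0\ldots a_{n-1}\in B(\Lambda)$ and forces infinitely many of the sequence's $n$-th coordinates to lie in $H$ and to be eventually distinct (otherwise the sequence would lie in some $Z(a_0\ldots a_{n-1}\bar H,F)$ only finitely often), so $|\F_\Lambda(a_0\ldots a_{n-1})\cap H|=\infty$. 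The direction "$\Leftarrow$" is exactly the convergence construction from the forward implication.

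The step I expect to require the most care is the claim that $\Lambda'$ (equivalently $\overline\Lambda$) contains no "spurious" boundary points beyond those dictated by (ii) — i.e. that a boundary point $(a_0\ldots a_{n-1}\h\h\ldots)$ with $|\F_\Lambda(a_0\ldots a_{n-1})\cap H|<\infty$ is \emph{not} in $\overline\Lambda$. This needs one to produce a neighbourhood of that point disjoint from $\Lambda$: taking $F:=\F_\Lambda(a_0\ldots a_{n-1})\cap H$, which is finite by hypothesis, the generalized cylinder $Z(a_0\ldots a_{n-1}\bar H,F)$ contains the point but meets $\A^\N$ only in sequences whose $n$-th coordinate lies in $H\setminus F$, none of which can start with $a_0\ldots a_{n-1}$ and still be in $\Lambda$ (if $a_0\ldots a_{n-1}\notin B(\Lambda)$ the argument is even easier, using $Z(a_0\ldots a_{n-1})$). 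This finiteness bookkeeping, together with keeping straight the identification of equivalence classes with their canonical representatives, is the only genuinely technical point; everything else is a routine translation between the convergence description of $\tau_{\Sigma_{\A^\N}^\V}$ and the combinatorial condition on follower sets, which is why the authors call the proof direct.
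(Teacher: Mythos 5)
Your proof is correct and is exactly the ``direct'' argument the paper has in mind (the paper omits the proof entirely, remarking only that it is direct): you translate the infinite-extension property into the neighbourhood/convergence description of $\tau_{\Sigma_{\A^\N}^\V}$ in both directions, and the key technical point you flag --- excluding spurious boundary points via $Z(a_0\ldots a_{n-1}\bar H,\F_\Lambda(a_0\ldots a_{n-1})\cap H)$ --- is handled correctly. Note that this same exclusion argument already yields $\overline\Lambda\subset\Lambda'$ without separately invoking closedness of $\Lambda'$ (which the paper only records afterwards, in Corollary \ref{cor:blur_shift=shift_inv+closed}), so your appeal to ``earlier development'' there is unnecessary rather than a gap.
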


\qed

As a direct consequence of Proposition \ref{prop:shift_of_a_shift} and Theorem \ref{theo:blur_shift=closure_of_a_shift}, we have the following corollary:

\begin{cor}\label{cor:blur_shift=shift_inv+closed}
A set $\Lambda'\subset\Sigma_{\A^\N}^{\V}$ is a blur shift space with resolution $\V$ if  and only if  $\Lambda'$ is $\s$ invariant, closed in $\Sigma_{\A^\N}^{\V}$ and verifies the infinite-extension property (Definition \ref{defn:blur_shift}.ii).
\end{cor}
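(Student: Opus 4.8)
The plan is to establish both implications of the equivalence, using exactly the two results cited in the statement. For the forward direction, suppose $\Lambda'=\Sigma_\Lambda^\V$ is a blur shift space with associated shift space $\Lambda\subset\A^\N$. By Theorem~\ref{theo:blur_shift=closure_of_a_shift}, $\Lambda'$ is the closure of $\Lambda$ with respect to $\tau_{\Sigma_{\A^\N}^\V}$, hence closed in $\Sigma_{\A^\N}^\V$. Part~\ref{prop:shift_of_a_shift_i} of Proposition~\ref{prop:shift_of_a_shift} gives $\s(\Lambda')=\s(\Sigma_\Lambda)\subset\Sigma_\Lambda=\Lambda'$, so $\Lambda'$ is $\s$-invariant. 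Finally, the infinite-extension property is part (ii) of Definition~\ref{defn:blur_shift}, so it holds by hypothesis; and by part (i) of that definition the shift space that witnesses it is precisely $\Lambda=\Lambda'\cap\A^\N$, which is what is meant in the statement of the corollary.

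For the converse, assume $\Lambda'$ is $\s$-invariant, closed in $\Sigma_{\A^\N}^\V$, and verifies the infinite-extension property relative to $\Lambda:=\{(x_n)_{n\in\N}\in\Lambda':\ x_n\in\A\ \forall n\}=\Lambda'\cap\A^\N$. The first task is to check that $\Lambda$ is a classical shift space, i.e. closed and $\s$-invariant in $\A^\N$ with its product topology. For closedness, note that the subspace topology that $\A^\N$ inherits from $\tau_{\Sigma_{\A^\N}^\V}$ is the product topology: for $\alpha=\alpha_0\ldots\alpha_{n-1}$ one has $Z(\alpha)\cap\A^\N=[\alpha_0\ldots\alpha_{n-1}]$ and $Z(\alpha\bar H,F)\cap\A^\N=\bigcup_{a\in H\setminus F}[\alpha_0\ldots\alpha_{n-1}a]$, so the traces on $\A^\N$ of the basic generalized cylinders of Proposition~\ref{prop:basis_top} generate exactly the prodiscrete topology; since $\Lambda'$ is closed in $\Sigma_{\A^\N}^\V$, $\Lambda=\Lambda'\cap\A^\N$ is closed in $\A^\N$. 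For $\s$-invariance, $\s(\A^\N)\subset\A^\N$ together with $\s(\Lambda')\subset\Lambda'$ yields $\s(\Lambda)\subset\Lambda'\cap\A^\N=\Lambda$. Hence $\Lambda$ is a shift space.

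It remains to identify $\Lambda'$ with $\Sigma_\Lambda^\V$. Since $\Sigma_{\A^\N}^\V=\A^\N\cup\partial^\V\A^\N$, a subset of $\Sigma_{\A^\N}^\V$ is determined by its intersections with $\A^\N$ and with $\partial^\V\A^\N$. By construction $\Lambda'\cap\A^\N=\Lambda$, which is condition (i) of Definition~\ref{defn:blur_shift}, and the infinite-extension property states exactly that $(a_0\ldots a_{n-1}\h\h\ldots)\in\Lambda'$ iff $a_0\ldots a_{n-1}\in B(\Lambda)$ and $|\F_\Lambda(a_0\ldots a_{n-1})\cap H|=\infty$, i.e. it pins down $\Lambda'\cap\partial^\V\A^\N$; this is condition (ii). Thus $\Lambda'$ satisfies both defining conditions of Definition~\ref{defn:blur_shift} relative to the shift space $\Lambda$, so $\Lambda'=\Sigma_\Lambda^\V$ is a blur shift space.

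The only step that is not pure bookkeeping is the verification that $\Lambda=\Lambda'\cap\A^\N$ is a genuine classical shift space, and inside it the identification of the subspace topology on $\A^\N$ with the product topology; once that is in place, Theorem~\ref{theo:blur_shift=closure_of_a_shift} and Proposition~\ref{prop:shift_of_a_shift} supply everything else with no new computation. If one prefers to avoid comparing topologies, one can argue sequentially using that $\Sigma_{\A^\N}^\V$ is Fréchet--Uryson (Proposition~\ref{prop:Frechet-Uryson_space}): a product-topology-convergent sequence in $\Lambda$ whose limit lies in $\A^\N$ converges to the same limit in $\Sigma_{\A^\N}^\V$, and that limit therefore belongs to the closed set $\Lambda'$, hence to $\Lambda$.
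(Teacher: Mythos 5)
Your proof is correct and follows essentially the route the paper intends (the paper leaves the corollary as a ``direct consequence'' of Proposition~\ref{prop:shift_of_a_shift} and Theorem~\ref{theo:blur_shift=closure_of_a_shift}, which is exactly what your forward direction uses). The only cosmetic difference is that in the converse you verify Definition~\ref{defn:blur_shift} directly for $\Lambda=\Lambda'\cap\A^\N$ rather than showing $\Lambda'=\overline{\Lambda}$ and invoking Theorem~\ref{theo:blur_shift=closure_of_a_shift}; both variants hinge on the same key check, namely that $\Lambda'\cap\A^\N$ is a classical shift space because the subspace topology induced on $\A^\N$ by $\tau_{\Sigma_{\A^\N}^\V}$ is the product topology, which you justify correctly.
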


\qed

We recall that there are infinitely many subfamilies of $\mathfrak{B}_{\Sigma_\Lambda}$ that are basis for $\tau_{\Sigma_\Lambda}$. For instance, we could just consider cylinders $Z_\Lambda(\alpha)$ and $Z_\Lambda(\alpha \bar H, F)$, where $\alpha\in B_{n_k}(\Lambda)$ for any infinite $\{n_k\}_{k\in\N}\subset\N$. Furthermore, if $\F_\Lambda(\alpha)\cap G$  is countable, then $F$ could be restricted to any prefixed family $\{F^i:\ F^i\text{ is finite and } F^i\nearrow\F_\Lambda(\alpha)\cap G\}$.

The following two results give conditions for the topological separability and countability of blur shifts:

\begin{prop}\label{prop:separability}  A blur shift $\Sigma_\Lambda$ is separable if and only if $B_1(\Lambda)$ is countable.

\end{prop}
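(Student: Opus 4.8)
The plan is to prove both implications directly, exhibiting an explicit countable dense set in the forward direction and obstructing separability in the reverse.

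\textbf{($\Leftarrow$) Suppose $B_1(\Lambda)$ is countable.} Then $B(\Lambda)\subseteq\bigcup_{n\ge 1}B_1(\Lambda)^n$ is countable, and since each $\bar H\in\VV_\Lambda$ corresponds to a blurred set meeting $B_1(\Lambda)$ in an infinite (hence at most countable) subset, $\tilde\V_\Lambda$ is countable as well. I would then take the candidate dense set
$$D:=\Lambda^{fin}\cup\{(\alpha\h\h\h\ldots):\ \alpha\in B(\Lambda),\ \h\in\tilde\V_\Lambda,\ \alpha\h\h\ldots\in\Sigma_\Lambda\},$$
where $\Lambda^{fin}$ is a countable dense subset of $\Lambda$ (which exists because $\Lambda$ sits inside $B_1(\Lambda)^\N$ with $B_1(\Lambda)$ countable, so $\Lambda$ is second countable, hence separable). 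The set $D$ is countable by the above. Density follows from Proposition \ref{prop:basis_top}: a basic open set $Z_\Lambda(\alpha)$ with $\alpha\in B(\Lambda)$ contains a point of $\Lambda^{fin}$, and a basic open set $Z_\Lambda(\alpha\bar H,F)$ contains the point $(\alpha\h\h\ldots)\in D$ (which lies in $\Sigma_\Lambda$ precisely when $\alpha\in B(\Lambda)$ and $|\F_\Lambda(\alpha)\cap H|=\infty$, and when the cylinder $Z_\Lambda(\alpha\bar H,F)$ is nonempty in $\Sigma_\Lambda$ this condition is met). Hence $D$ is a countable dense set and $\Sigma_\Lambda$ is separable.

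\textbf{($\Rightarrow$) Suppose $B_1(\Lambda)$ is uncountable.} The key observation is that the singletons $Z_\Lambda(a)$, for $a\in B_1(\Lambda)$, form an uncountable family of pairwise disjoint nonempty open sets in $\Sigma_\Lambda$: indeed $Z_\Lambda(a)\cap Z_\Lambda(b)=\emptyset$ when $a\ne b$, and $Z_\Lambda(a)\ne\emptyset$ since $a\in B_1(\Lambda)$ means some sequence of $\Lambda$ starts with $a$. Any dense subset $D$ of $\Sigma_\Lambda$ must intersect each of these open sets, and by disjointness it must contain a distinct point for each $a\in B_1(\Lambda)$; hence $|D|\ge |B_1(\Lambda)|$ is uncountable. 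So $\Sigma_\Lambda$ is not separable.

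I do not expect any serious obstacle here; the only point requiring a little care is the forward direction, where one must check that the exhibited boundary points genuinely belong to $\Sigma_\Lambda$ — this is exactly the content of the infinite-extension property in Definition \ref{defn:blur_shift}.ii, and the fact that a nonempty basic cylinder $Z_\Lambda(\alpha\bar H,F)$ forces $|\F_\Lambda(\alpha)\cap H|=\infty$ makes the verification automatic. One should also note at the start that if $\V_\Lambda=\emptyset$ the space is just $\Lambda$ and the statement reduces to the classical fact that $\Lambda\subseteq B_1(\Lambda)^\N$ is separable iff $B_1(\Lambda)$ is countable, so the interesting content is the interplay of $B(\Lambda)$, $\tilde\V_\Lambda$, and $\Lambda^{fin}$ all being countable simultaneously.
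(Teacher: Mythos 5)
Your ($\Rightarrow$) direction is exactly the paper's argument (the pairwise disjoint nonempty open sets $Z_\Lambda(a)$, $a\in B_1(\Lambda)$, force any dense set to have cardinality at least $|B_1(\Lambda)|$), and it is correct. The ($\Leftarrow$) direction, however, contains a genuine error: the claim that countability of $B_1(\Lambda)$ forces $\tilde\V_\Lambda$ to be countable is false. The sets $H\cap B_1(\Lambda)$, $H\in\V_\Lambda$, form an almost disjoint family of infinite subsets of the countable set $B_1(\Lambda)$, and such families can have cardinality $2^{\aleph_0}$; for instance, take $\A=\N$, $\Lambda=\A^\N$ and $\V$ an uncountable almost disjoint family of infinite subsets of $\N$. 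There every $(\h\h\h\ldots)$ with $\h\in\tilde\V$ belongs to $\Sigma_\Lambda$, so your set $D$ is uncountable and does not witness separability. (This is also why Proposition \ref{prop:2nd_count} must assume countability of $B_1(\Lambda)$ and of $\V_\Lambda$ as two separate hypotheses.) There is a second, smaller error: a nonempty cylinder $Z_\Lambda(\alpha\bar H,F)$ need not contain the point $(\alpha\h\h\ldots)$, since nonemptiness only guarantees $\F_\Lambda(\alpha)\cap(H\setminus F)\neq\emptyset$, not $|\F_\Lambda(\alpha)\cap H|=\infty$. For example, with $\A=\N$, $\V=\{\A\}$ and $\Lambda=\{(n,n-1,\ldots,1,0,0,0,\ldots):\ n\in\N\}$, the cylinder $Z_\Lambda(5\,\bar{H})$ with $H=\A$ is nonempty, yet $(5\,\tilde\A\tilde\A\ldots)\notin\Sigma_\Lambda$ because $\F_\Lambda(5)=\{4\}$ is finite.

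Both problems disappear if you simply drop the boundary points from $D$: the countable set $\Lambda^{fin}$ alone is dense in $\Sigma_\Lambda$, because every nonempty generalized cylinder meets $\Lambda$. Indeed, if $Z_\Lambda(\alpha\bar H,F)\neq\emptyset$ then some $h\in H\setminus F$ satisfies $\alpha h\in B(\Lambda)$ (either a point of the cylinder already has its $n$-th coordinate in $H\setminus F$, or the only witness is the boundary point and the infinite-extension property supplies infinitely many such $h$), and then any point of $\Lambda$ extending $\alpha h$ lies in the cylinder. This is essentially what the paper does: it picks one point $\y^{\w}\in Z_\Lambda(\w)$ for each of the countably many $\w\in B(\Lambda)$ and verifies density of $\{\y^{\w}\}$ by exactly this choice of $h$.
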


\begin{proof} 

Let $\mathcal{D}\subset \Sigma_\Lambda$ be a countable dense subset. Then, since $\mathcal{D}$ is dense in $\Sigma_\Lambda$, it follows that for each $a\in B_1(\Lambda)$ there exists $\y\in \mathcal{D}\cap Z_\Lambda(a)$. Since for distinct $a$ we have necessarily a distinct $\y$, then the cardinality of $\mathcal{D}$ is not less than the cardinality of $B_1(\Lambda)$, which implies that $B_1(\Lambda)$ is countable. 

Conversely, if $B_1(\Lambda)$ is countable, then $B(\Lambda)$ is countable. For each $\w=w_1...w_m\in B(\Lambda)$ we pick a point $\y^\w\in Z(\w)$. Thus, $D:=\{\y^\w \in \Sigma_\Lambda:\ \w\in B(\Lambda)\}$ is a countable set. Furthermore, $D$ is dense, since given any $Z_\Lambda(a_0...a_n\bar H,F)$ we can take $\w=a_0...a_nh\in B(\Lambda)$ with $h\in H\setminus F$ and thus $\y^\w\in D\cap Z_\Lambda(a_0...a_n\bar H,F)$.

\end{proof}

\begin{prop}\label{prop:2nd_count}  A  blur shift $\Sigma_\Lambda$ is second countable if and only if $B_1(\Lambda)$  and $\V_\Lambda$ are countable.

\end{prop}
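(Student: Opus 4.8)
\textbf{Proof proposal for Proposition \ref{prop:2nd_count}.}

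The plan is to prove both implications directly, leaning on the fact that $\mathfrak{B}_{\Sigma_\Lambda}$ is a basis (Proposition \ref{prop:basis_top}) and on Proposition \ref{prop:separability} for one direction. For the ``only if'' direction, suppose $\Sigma_\Lambda$ is second countable, so it admits a countable basis $\mathcal{C}$. Second countability implies separability, so by Proposition \ref{prop:separability} we immediately get that $B_1(\Lambda)$ is countable; it remains to show $\V_\Lambda$ is countable. The key observation is that for each $\bar H\in\VV_\Lambda$ the point $\oslash_H:=(\h\h\h\ldots)$ lies in $\Sigma_\Lambda$ (since $\F_\Lambda(\epsilon)\cap H=B_1(\Lambda)\cap H$ is infinite when $H\in\V_\Lambda$), and these points are pairwise distinct for distinct $H$. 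I would then argue that any basis must, for each $H$, contain some member $C_H\in\mathcal{C}$ with $\oslash_H\in C_H\subset Z_\Lambda(\bar H)$ (this last containment because $Z_\Lambda(\bar H)$ is an open neighbourhood of $\oslash_H$, and inside it $\oslash_H$ must have a basic neighbourhood from $\mathcal{C}$); since $Z_\Lambda(\bar H)\cap Z_\Lambda(\bar G)$ contains no point of the form $(\tilde K\tilde K\ldots)$ when $G\neq H$ — indeed it contains only sequences starting with a letter in the finite set $H\cap G$ — the assignment $H\mapsto C_H$ is injective, giving $|\V_\Lambda|\le|\mathcal{C}|<\infty$ in the sense of countability.

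For the ``if'' direction, assume $B_1(\Lambda)$ and $\V_\Lambda$ are countable. Then $B(\Lambda)$ is countable (finite words over a countable alphabet). If additionally each follower set intersected with each blurred set, $\F_\Lambda(\alpha)\cap H$, is countable, one fixes for each such pair an increasing exhausting sequence $\{F^i_{\alpha,H}\}_{i\in\N}$ of finite subsets with $F^i_{\alpha,H}\nearrow \F_\Lambda(\alpha)\cap H$, and takes
$$\mathcal{C}:=\{Z_\Lambda(\alpha):\alpha\in B(\Lambda)\}\cup\{Z_\Lambda(\alpha\bar H,F^i_{\alpha,H}):\alpha\in B(\Lambda),\ \bar H\in\VV_\Lambda,\ i\in\N\},$$
which is countable. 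Using the remark preceding the proposition (that $F$ may be restricted to such a prefixed exhausting family, and that one may restrict $\alpha$ to any infinite set of lengths — here we need all lengths, which is fine), $\mathcal{C}$ is a basis. The mild subtlety is the case where $\F_\Lambda(\alpha)\cap H$ is uncountable for some $\alpha,H$; but I expect this cannot happen, because $H\in\V_\Lambda$ forces $|B_1(\Lambda)\cap H|=\infty$ while $B_1(\Lambda)$ being countable forces $B_1(\Lambda)\cap H$, and hence every $\F_\Lambda(\alpha)\cap H\subset B_1(\Lambda)$, to be countable. So this case is vacuous under our hypothesis, and the construction of $\mathcal{C}$ above goes through unconditionally.

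The main obstacle is the injectivity argument in the ``only if'' direction: one must verify carefully that distinct blurred sets yield distinct basis elements, which hinges precisely on the defining property $|G\cap H|<\infty$ for $G\neq H$ in $\V$, ensuring that the ``fully blurred'' points $\oslash_H$ are topologically separated from one another by the generalized cylinders $Z_\Lambda(\bar H)$. The rest is bookkeeping: counting finite words over a countable alphabet, and checking that the proposed countable family refines to a basis using the flexibility already noted in the text. I would present the ``only if'' direction first since it isolates the conceptual point, then dispatch the ``if'' direction with the explicit construction.
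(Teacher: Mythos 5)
Your proof is correct, and its overall shape matches the paper's: exhibit a countable basis when $B_1(\Lambda)$ and $\V_\Lambda$ are countable, and derive an obstruction to second countability otherwise. The ``if'' direction is essentially the paper's argument --- the paper simply observes that $B(\Lambda)$ is countable and that the standard basis $\mathfrak{B}_{\Sigma_\Lambda}$ is then countable (since the finite subsets of the countable set $\F_\Lambda(\w)\cap H\subset B_1(\Lambda)$ form a countable collection); your restriction to exhausting families $F^i_{\alpha,H}\nearrow\F_\Lambda(\alpha)\cap H$ is harmless but not needed for that count. Where you genuinely diverge is the ``only if'' direction. The paper disposes of it in two sentences, asserting that uncountability of $B_1(\Lambda)$ or of $\V_\Lambda$ forces $\mathfrak{B}_{\Sigma_\Lambda}$ to contain uncountably many cylinders ``of the form $Z_\Lambda(\w)$'' with no countable subfamily that is a basis; as written this is a slip for the $\V_\Lambda$ case (an uncountable $\V_\Lambda$ produces uncountably many cylinders of the form $Z_\Lambda(\bar H)$, not $Z_\Lambda(\w)$), and the claim that no countable subfamily of \emph{any} basis suffices is left unjustified. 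You supply exactly the missing justifications: separability via Proposition \ref{prop:separability} handles $B_1(\Lambda)$, and the injection $H\mapsto C_H$ --- valid because $\oslash_H\in\Sigma_\Lambda$ by the infinite-extension property applied to $\epsilon$, and because $\bar H\cap\bar G=H\cap G\subset\A$ keeps $\oslash_H$ out of $Z_\Lambda(\bar G)$ for $G\neq H$ --- handles $\V_\Lambda$. Your version is therefore more complete than the paper's on this half; the paper's buys brevity at the cost of precision.
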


\begin{proof} Suppose $B_1(\Lambda)$ and $\V_\Lambda$ are countable. It follows that  $B(\Lambda)$ is also countable. Hence, the topological basis of $\Sigma_\Lambda$,
$$\mathfrak{B}_{\Sigma_\Lambda}:=\{Z_\Lambda(\w),\ Z_\Lambda(\w\bar H,F):\ \w\in B(\Lambda),\ H\in\V_\Lambda ,\ F\subset \F_\Lambda(\w)\cap H \text{ finite}\}$$ is countable.

The converse, comes directly to the fact that if $B_1(\Lambda)$ or $\V_\Lambda$ were not countable, then $\mathfrak{B}_{\Sigma_\Lambda}$ would have an uncountable number of generalized cylinders of the form $Z_\Lambda(\w)$. Therefore, 
 $\mathfrak{B}_{\Sigma_\Lambda}$ would not be countable neither would have a countable subfamily which is a basis.

\end{proof}

\begin{prop}\label{prop:first_countable} A blur shift $\Sigma_\Lambda$ is first countable if and only if for all $H\in\V_\Lambda$ we have $H\cap B_1(\Lambda)$ countable.
\end{prop}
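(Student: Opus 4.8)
The plan is to reduce first countability to the existence of a countable neighbourhood basis at every point of $\Sigma_\Lambda$, and to analyse such bases separately for points of $\Lambda$ and points of $\partial\Lambda$. The hypothesis will matter only at boundary points: at a point $\x\in\Lambda$ the nested family $\{Z_\Lambda(x_0\ldots x_n)\}_{n\in\N}$ is already a countable local basis, since any basic open set $Z\in\mathfrak{B}_{\Sigma_\Lambda}$ with $\x\in Z$ constrains only a finite prefix $x_0\ldots x_{N-1}$ of $\x$ (plus possibly the coordinate $x_N$, which then lies in the relevant $\bar H\setminus F$), so $Z_\Lambda(x_0\ldots x_N)\subset Z$. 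Thus the whole content is at the boundary.

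For the implication ``$B_1(\Lambda)\cap H$ countable for every $H\in\V_\Lambda$ $\Rightarrow$ $\Sigma_\Lambda$ first countable'', fix $\x=(x_0\ldots x_{m-1}\h\h\h\ldots)\in\mathcal{L}_m(\Lambda)$. First I would check that every basic open set containing $\x$ constrains a prefix $x_0\ldots x_{n-1}$ with $n\le m$, and if it carries a tail constraint $\bar G\setminus F'$ at coordinate $n$, then either $n<m$ and $x_n\in G$, or $n=m$ and necessarily $G=H$ (because $\h\in\bar G$ and $\h\notin\A\supset G$ force $\h=\tilde G$, i.e. $\bar G=\bar H$). Consequently the family $\{Z_\Lambda(x_0\ldots x_{m-1}\bar H,F):F\subset\F_\Lambda(x_0\ldots x_{m-1})\cap H\text{ finite}\}$ is cofinal among the basic neighbourhoods of $\x$, hence a local basis at $\x$. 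Since $\F_\Lambda(\w)\subset B_1(\Lambda)$ always, the index set $\F_\Lambda(x_0\ldots x_{m-1})\cap H$ is contained in the countable set $B_1(\Lambda)\cap H$, so its collection of finite subsets is countable; this gives a countable local basis at $\x$, and with the case $\x\in\Lambda$ already handled, $\Sigma_\Lambda$ is first countable.

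For the converse I would argue by contraposition. Assume some $H\in\V_\Lambda$ has $B_1(\Lambda)\cap H$ uncountable, and consider $\x:=(\h\h\h\ldots)$, which lies in $\Sigma_\Lambda$ precisely because $H\in\V_\Lambda$ gives $|\F_\Lambda(\epsilon)\cap H|=|B_1(\Lambda)\cap H|=\infty$. Given any countable family $\{U_n\}_{n\in\N}$ of open neighbourhoods of $\x$: the only basic open sets containing $\x$ are $\Sigma_\Lambda$ and the cylinders $Z_\Lambda(\bar H,F)$ with $F\subset B_1(\Lambda)\cap H$ finite, so for each $n$ there is a finite $F_n\subset B_1(\Lambda)\cap H$ with $Z_\Lambda(\bar H,F_n)\subset U_n$. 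Then $\bigcup_n F_n$ is countable, so I can pick $a\in(B_1(\Lambda)\cap H)\setminus\bigcup_n F_n$. Now $Z_\Lambda(\bar H,\{a\})$ is an open neighbourhood of $\x$; choosing $\y\in\Lambda$ with $y_0=a$ (possible since $a\in B_1(\Lambda)$) gives $\y\in Z_\Lambda(\bar H,F_n)\subset U_n$ for every $n$, while $\y\notin Z_\Lambda(\bar H,\{a\})$ because $y_0=a\notin\bar H\setminus\{a\}$. Hence no $U_n$ is contained in $Z_\Lambda(\bar H,\{a\})$, so $\{U_n\}$ is not a local basis at $\x$, and $\Sigma_\Lambda$ is not first countable.

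The main obstacle I anticipate is entirely bookkeeping rather than ideas: correctly enumerating which generalized cylinders of $\mathfrak{B}_{\Sigma_\Lambda}$ contain a given boundary point, and verifying the cofinality claim that collapses the naive (uncountable) set of basic neighbourhoods to the family indexed by finite subsets of $\F_\Lambda(x_0\ldots x_{m-1})\cap H$. The only degenerate case to dispatch is $\Lambda=\emptyset$, where $\Sigma_\Lambda=\emptyset$ is trivially first countable and $\V_\Lambda=\emptyset$ makes the condition vacuously true.
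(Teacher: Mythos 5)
Your proposal is correct and follows essentially the same route as the paper: the nested cylinders $Z_\Lambda(x_0\ldots x_n)$ handle points of $\Lambda$, finite subsets of the countable set $\F_\Lambda(x_0\ldots x_{m-1})\cap H\subset B_1(\Lambda)\cap H$ give the countable local basis at boundary points, and the point $(\tilde G\tilde G\tilde G\ldots)$ witnesses the failure of first countability when $G\cap B_1(\Lambda)$ is uncountable. Your contrapositive argument for the converse (escaping the countable union $\bigcup_n F_n$) is in fact spelled out more carefully than the paper's one-line assertion that the uncountable local basis admits no countable subfamily, but it is the same underlying idea.
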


\begin{proof}

Just note that if there exists some $G\in \V_\Lambda$ such that $G\cap B_1(\Lambda)$ is not countable, then the point $\x=(\tilde G\tilde G\tilde G...)$ has an uncountable local basis $\mathfrak{B}_\x:=\{Z(\bar G,F):\ F\subset G\text{ finite}\}$ which has not any countable subfamily which is a local basis. Thus, $\Sigma_\Lambda$ is not first countable.\\

Conversely, observe that if $\x\in\Lambda$, then $\mathfrak{B}_\x:=\{Z_\Lambda(x_0...x_n):\ n\in\N\}$ is always a countable local basis of $\x$.  On the other hand, given $\x=(x_0...x_{n-1}\h\h\h...)\in\partial\Lambda$, if $\F_\Lambda(x_0...x_{n-1})\cap H$ is countable we can take an enumeration of it, $\F_\Lambda(x_0...x_{n-1})\cap H=\{h_1,h_2,...\}$, and then $\mathfrak{B}_\x:=\{Z_\Lambda(x_0...x_{n-1}\bar H,\{h_1,...,h_\ell\}):\ \ell\ge 0\}$ is a countable local basis of $\x$.

\end{proof}

Define $$B_1(\V_\Lambda):=\{a\in\A:\ a\in B_1(\Lambda)\cap H\text{ for some } H\in\V_\Lambda\}.$$ 

We notice that even if $\Sigma_\Lambda$ is first countable, that is, if $B_1(\Lambda)\cap H$ is countable for all $H\in\V_\Lambda$, it is possible that $\V_\Lambda$ and $B_1(\V_\Lambda)$ are not countable.

\begin{ex} The blur shifts of examples \ref{ex:ultragraph_shift}, \ref{ex:blurshift_count_inf_intersc}, \ref{ex:prime_shift} and \ref{ex:intersect-all_shift} are first and second countable, since they have countable alphabets.

The blur shifts of examples \ref{ex:blurshift_uncount_inf_intersc} and \ref{ex:blurshift_uncount_no_intersc} are both first countable, but not second countable. Observe that while $B_1(\V_\Lambda)$ is countable in Example \ref{ex:blurshift_uncount_inf_intersc}, it is uncountable in Example \ref{ex:blurshift_uncount_no_intersc}.

The blur shift of Example \ref{ex:blurshift_uncount_uncont} is neither first nor second countable.
\end{ex}

\subsection{Metrizability}

To find criteria for the metrizability of blur shifts, we shall use the Nagata-Smirnov metrization theorem, which states that a topological space is metrizable if and only if it is regular, Hausdorff and has a {\bf countably locally finite} basis. We recall that a family $\mathcal{U}$ of sets in a topological space is said to be {\bf locally finite} if any point of the space has an open neighborhood which intersects at most a finite number of sets $\mathcal{U}$, while a family $\mathcal{U}$ of sets is said to be {\bf countably locally finite} if it is the countable union of locally finite families.

Since $\Sigma_\Lambda$ is always regular and Hausdorff (propositions \ref{prop:Hausdorff} and \ref{prop:Regular}), we just need to find conditions under which $\tau_{\Sigma_\Lambda}$ has countably locally finite basis.

The following lemma translates to the language of generalized cylinders the obvious fact that in a Hausdorff and regular space, a family of closed sets is locally finite if and only if any intersection of infinitely many sets of the family is always empty and any convergent sequence is contained at most in a finite union of sets of the family.

\begin{lem}\label{lem:locally_finite} Let $\Sigma_\Lambda$ be a blur shift space. A family of generalized cylinders $\mathfrak{F}\subset\mathfrak{B}_{\Sigma_\Lambda}$ is locally finite if and only if all the following conditions hold:
\begin{enumerate}
\item Any infinite intersection of generalized cylinders of $\mathfrak{F}$ is empty;

\item For all $k\geq 0$, $\alpha\in B_{k}(\Lambda)$ and  $H\in \V_\Lambda$ there exists a finite set $F\subset H$ such that no word $\gamma\in B(\Lambda)$ with $\gamma_0...\gamma_{k-1}=\alpha$ and $\gamma_k\in H\setminus F$ is used in any generalized cylinder of 
 $\mathfrak{F}$.

\item Any infinite subcollection  $\{Z_\Lambda(\gamma \bar G^\ell,F^\ell):\ \ell\in\lambda\}\subset\mathfrak{F}$ is such that for all $H\in\V_\Lambda$ we have $|H\cap\bigcup_{\ell\in\lambda}\big(G^\ell\setminus F^\ell\big)|<\infty$.

\end{enumerate}
\end{lem}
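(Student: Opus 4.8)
The plan is to reduce local finiteness to a sequential test, and then perform the combinatorial translation one type of basic neighbourhood at a time. Since $\Sigma_\Lambda$ is regular and Hausdorff (Propositions~\ref{prop:Hausdorff} and~\ref{prop:Regular}) and Fréchet-Uryson, hence sequential (Proposition~\ref{prop:Frechet-Uryson_space}), I would first establish the auxiliary fact hinted at just before the statement: a family $\mathfrak{F}$ of closed subsets of $\Sigma_\Lambda$ is locally finite if and only if \textup{(a)} no point of $\Sigma_\Lambda$ belongs to infinitely many members of $\mathfrak{F}$, and \textup{(b)} for every convergent sequence $\y^n\to\bar\y$ in $\Sigma_\Lambda$ only finitely many members of $\mathfrak{F}$ contain one of the points $\y^n$ or $\bar\y$. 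The direction ``locally finite $\Rightarrow$ (a) and (b)'' is routine. For the converse, starting from a point $\bar\y$ at which $\mathfrak{F}$ is not locally finite, I would use the explicit local bases at $\bar\y$ furnished by Proposition~\ref{prop:basis_top}---the chain $\{Z_\Lambda(\bar\y_{[0,n]})\}_n$ when $\bar\y\in\Lambda$, and the filter base $\{Z_\Lambda(\bar\y_{[0,m-1]}\bar H,F):F\subset H\text{ finite}\}$ when $\bar\y=(\bar\y_{[0,m-1]}\h\h\ldots)\in\partial\Lambda$---to build recursively either an infinite subfamily of $\mathfrak{F}$ with a point in common (contradicting (a)) or pairwise distinct members $Z^{(n)}\in\mathfrak{F}$ together with points $\y^n\in Z^{(n)}$ with $\y^n\to\bar\y$ (contradicting (b)); that these local bases are linearly ordered, resp.\ directed, by reverse inclusion is exactly what lets the recursion proceed without first countability. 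Once the auxiliary fact is in place, condition (a) is nothing but the first listed condition.

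The substance of the argument is to show that, modulo (a), condition (b) is equivalent to the second and third listed conditions, which I would do by describing exactly which generalized cylinders can meet a given basic neighbourhood. For a neighbourhood $Z_\Lambda(x_{[0,m-1]}\bar H,F)$ of a point $(x_{[0,m-1]}\h\h\ldots)\in\partial\Lambda$ (non-degenerate by the infinite-extension property, since then $x_{[0,m-1]}\in B(\Lambda)$ and $\F_\Lambda(x_{[0,m-1]})\cap H$ is infinite) the members meeting it fall into three mutually exclusive classes: those containing the point itself---finitely many, by (a); those whose defining word agrees with $x_{[0,m-1]}$ on the first $m$ letters and carries a letter of $H\setminus F$ in position $m$ (type-1 cylinders $Z_\Lambda(\beta)$ with $\beta_m\in H\setminus F$, or type-2 cylinders $Z_\Lambda(\delta\bar G,F')$ with $|\delta|>m$ and $\delta_m\in H\setminus F$); and type-2 cylinders $Z_\Lambda(x_{[0,m-1]}\bar G,F')$ with $G\neq H$ such that $(\bar G\setminus F')\cap(\bar H\setminus F)$ still contains a follower of $x_{[0,m-1]}$. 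The second class is emptied by replacing $F$ with the finite set the second condition supplies for the triple $(m,x_{[0,m-1]},H)$; the third class, when it has infinitely many members, is emptied by one further finite enlargement of $F$, because $G\neq H$ forces $(\bar G\setminus F')\cap(\bar H\setminus F)\subseteq G\cap H$ with $G\cap H$ finite (pairwise-finite intersections in $\V$), so the third condition makes $H\cap\bigcup_\ell(G^\ell\setminus F^\ell)$ finite and adjoining it to $F$ kills every such overlap simultaneously. Conversely, given the three conditions, I would assemble $F$ as the union of the finite set from the second condition, the finite set $H\cap\bigcup_\ell(G^\ell\setminus F^\ell)$ from the third, and a finite patch handling the members through the point, and verify directly that $Z_\Lambda(x_{[0,m-1]}\bar H,F)$ then meets only finitely many members. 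The neighbourhoods $Z_\Lambda(x_{[0,n]})$ of points $x\in\Lambda$ are treated by the same trichotomy applied at the prefixes of $x$ and at the blurred sets $H\in\V_\Lambda$ with $H\cap\F_\Lambda(x_{[0,n]})$ infinite, once more reducing everything to (a), the second condition, and the third condition.

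I expect the main obstacle to be this very case analysis for (b): separating, for each basic neighbourhood, the honestly finite pool of ``exceptional'' members through the base point (which (a) takes care of) from the a priori infinite families of members that a single finite enlargement of $F$ must wipe out, and checking that the second and third conditions hand out precisely the finite sets needed and that nothing is left over. Crossing the two shapes of basic neighbourhood with the two types of generalized cylinder splits into several subcases, and throughout one must carefully distinguish a cylinder that merely \emph{carries} a letter of a blurred set (governed by the second condition) from one that \emph{branches into} a blurred set (governed by the third); once that split is made precise, each verification is a short computation from the definitions of the generalized cylinders, the infinite-extension property, and the pairwise finiteness of intersections inside $\V_\Lambda$.
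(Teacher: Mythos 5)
Your strategy coincides with the paper's: the paper does not isolate your auxiliary sequential characterization as a separate lemma, but its sufficiency argument is exactly your trichotomy (members through the base point, members whose defining word carries a letter of $H$ at the coordinate following the base word, and members branching into some $\bar G\neq\bar H$), and its necessity argument exhibits the same witnessing points and neighbourhoods. Your treatment of base points in $\partial\Lambda$ is correct and matches the paper's.

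The gap is the final sentence of your second paragraph, where you assert that the neighbourhoods $Z_\Lambda(x_{[0,n]})$ of a point $\x\in\Lambda$ reduce ``once more \ldots to (a), the second condition, and the third condition.'' They do not: none of conditions (i)--(iii) controls an infinite, pairwise disjoint collection of members whose defining words extend ever longer prefixes of $\x$ and then deviate from $\x$ at a letter lying outside every blurred set. Concretely, let $\A=\N$, $\Lambda=\A^\N$, $\V=\{H\}$ with $H$ the set of even numbers, $\x=(1,1,1,\dots)$, and let $\mathfrak{F}$ consist of the cylinders $Z(\w_n m)$ where $\w_n$ denotes the word of $n$ ones and $m\geq 3$ is odd. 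These cylinders are pairwise disjoint, so (i) holds; every letter of every defining word is odd, so (ii) holds with $F=\emptyset$; (iii) is vacuous. Yet every basic neighbourhood $Z(\w_{n+1})$ of $\x$ contains the infinitely many members $Z(\w_j m)$ with $j\geq n+1$, so $\mathfrak{F}$ is not locally finite --- equivalently, your condition (b) fails along $\y^j=(\w_j\,3\,1\,1\,\dots)\to\x$ while (i)--(iii) all hold. Hence (b) is strictly stronger than (ii) plus (iii) modulo (a), and the equivalence claimed in your second paragraph breaks precisely at limits lying in $\Lambda$. You inherit this from the statement itself: the paper's proof makes the same leap (it asserts that if every $Z_\Lambda(x_0\dots x_n)$ meets infinitely many members of $\mathfrak{F}$ then $\x$ belongs to infinitely many members, which the example refutes), and the lemma is applied there only to families whose defining words all have one fixed common length, for which the $\Lambda$-case is trivial. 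A correct write-up must either add a further condition handling such families or restrict the class of admissible $\mathfrak{F}$.
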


\begin{proof}

Firstly let us check that $i.-iii.$ are necessary conditions to $\mathfrak{F}$ be locally finite. In fact, it is direct that if condition $i.$ does not hold, then we can take an element $\x$ which belongs to the intersection of infinitely many generalized cylinders of $\mathfrak{F}$ and any neighborhood of such $\x$ will intersect all of those infinitely many generalized cylinders. 
Now, suppose there are $\alpha\in B_{k}(\Lambda)$ and  $H\in \V_\Lambda$ and suppose an infinite subset $\{h^i\}_{i\geq 1}\subset H$ with the property that for each $i\geq 1$ there exists $\gamma^i\in B(\Lambda)$ which is used in the definition of some generalized cylinder $Z^i\in\mathfrak{F}$, and which is such that $\gamma_0^i...\gamma_k^i=\alpha h^i$.
Therefore, any neighborhood $Z_\Lambda(\alpha \bar H,F)$ of the point $(\alpha_0...\alpha_{k-1}\h\h\h...)$ will intersect infinitely many generalized cylinders $Z^i$. 
 Finally, suppose there exist $\{Z_\Lambda(\gamma \bar G^\ell,F^\ell):\ \ell\in\lambda\}\subset\mathfrak{F}$ and $H\in\V_\Lambda$ such that we have $|H\cap\bigcup_{\ell\in\lambda}\big(G^\ell\setminus F^\ell\big)|=\infty$. It implies that $|H \cap \F_\Lambda(\gamma)|=\infty$ and for all finite $F\subset H \cap \F_\Lambda(\gamma)$ we have $Z_\Lambda(\gamma\bar H, F)$ intersecting infinitely many generalized cylinders of $\{Z_\Lambda(\gamma \bar G^\ell,F^\ell):\ \ell\in\lambda\}$.\\

To check that $i.-iii.$ are sufficient conditions for $\mathfrak{F}$ be locally finite, note that for any $\x\in\Lambda$, if for all $n\geq 0$ the generalized cylinder $Z_\Lambda(x_0...x_n)$ intersects infinitely many generalized cylinders of $\mathfrak{F}$ means that $\x$ should  belong to infinitely many generalized cylinders of $\mathfrak{F}$, contradicting condition $i.$. For $\x\in\partial\Lambda$, say $\x=(\alpha_0...\alpha_{k-1}\h\h\h...)$,  if any neighborhood $Z_\Lambda(\alpha_0...\alpha_{k-1}\bar H, F)$ intersects infinitely many generalized cylinders of $\mathfrak{F}$, then at least one of the three situations holds: $(a)$ $\x$ belongs to infinitely many generalized cylinders of $\mathfrak{F}$; $(b)$ For any finite $F\subset H$,  $Z_\Lambda(\alpha_0...\alpha_{k-1}\bar H, F)$  intersects infinitely many generalized cylinders of $\mathfrak{F}$, each one of them fixing the $k$ first entries as $x_0...x_{k-1}$ and with the $(k+1)^{th}$ entry belonging to $H$; $(c)$  $Z_\Lambda(\alpha_0...\alpha_{k-1}\bar H, F)$ intersects infinitely many cylinders $Z_\Lambda(\alpha_0...\alpha_{k-1}\bar G^\ell, F^\ell)$. However, situation $(a)$ is avoided by condition $i.$, situation $(b)$ is avoided by condition $ii.$, and situation $(c)$ cannot occur due to condition $iii.$.

\end{proof}

\begin{theo}\label{theo:Metrizable} Suppose $\Sigma_\Lambda$ is a blur shift which is first countable and such that at least one of the following conditions holds:
\begin{enumerate}

\item\label{theo:Metrizable_i} $\V_\Lambda$ is countable;

\item\label{theo:Metrizable_ii} Each $H\in \V_\Lambda$ has just a finite number of elements that appear in some other set of $\V_\Lambda$ (but it is possible that some element appears in infinitely many sets of $\V_\Lambda$). 

\end{enumerate}

Then $\Sigma_\Lambda$ is metrizable.

\end{theo}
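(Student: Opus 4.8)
The strategy is to invoke the Nagata--Smirnov metrization theorem: since Propositions~\ref{prop:Hausdorff} and~\ref{prop:Regular} already give that $\Sigma_\Lambda$ is Hausdorff and regular, it suffices to exhibit a countably locally finite basis for $\tau_{\Sigma_\Lambda}$. I will build this basis as a countable union of families $\mathfrak{F}_0,\mathfrak{F}_1,\mathfrak{F}_2,\dots$, each of which I verify is locally finite by checking conditions (i)--(iii) of Lemma~\ref{lem:locally_finite}. The natural stratification is by word-length: for each $k\geq 0$, let the families handling generalized cylinders whose ``decided'' prefix has length $k$ form finitely many (or countably many) locally finite subfamilies.

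First I would treat the cylinders of the form $Z_\Lambda(\alpha)$. For fixed $k$, the family $\{Z_\Lambda(\alpha):\ \alpha\in B_k(\Lambda)\}$ consists of pairwise disjoint clopen sets, and since first countability (Proposition~\ref{prop:first_countable}) gives $H\cap B_1(\Lambda)$ countable for every $H\in\V_\Lambda$, one checks it is locally finite: an infinite intersection is empty (distinct $\alpha$'s are disjoint), condition (ii) holds because for any $\alpha\in B_{k-1}(\Lambda)$ and $H\in\V_\Lambda$ only countably many one-letter extensions $\alpha h$ with $h\in H$ lie in $B(\Lambda)$ — actually here I must be a little careful, since countably many is not finite; the point is that each such $Z_\Lambda(\alpha h)$ is a \emph{single} cylinder, so I need to further split $\{Z_\Lambda(\alpha):\alpha\in B_k(\Lambda)\}$ by enumerating, for each length-$(k-1)$ prefix, the extensions along the blurred sets, and distribute them into countably many subfamilies indexed by this enumeration. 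Under hypothesis~\ref{theo:Metrizable_i} ($\V_\Lambda$ countable) this bookkeeping is clean; under hypothesis~\ref{theo:Metrizable_ii} one uses that each $H$ meets only finitely many other blurred sets to control the overlaps.

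Next I handle the cylinders $Z_\Lambda(\alpha\bar H,F)$. The key device is that, because $\Sigma_\Lambda$ is first countable, for each $\alpha\in B(\Lambda)$ and $H\in\V_\Lambda$ the set $\F_\Lambda(\alpha)\cap H$ is countable, so I may fix once and for all an increasing exhaustion $F^{\alpha,H}_0\subset F^{\alpha,H}_1\subset\cdots$ with union $\F_\Lambda(\alpha)\cap H$, and only include the cylinders $Z_\Lambda(\alpha\bar H,F^{\alpha,H}_j)$, $j\in\N$, in the basis — this is legitimate by the remark preceding Proposition~\ref{prop:separability}. For fixed $k$ and fixed $j$, I group $\{Z_\Lambda(\alpha\bar H,F^{\alpha,H}_j):\alpha\in B_k(\Lambda),\ H\in\V_\Lambda\}$ and verify Lemma~\ref{lem:locally_finite}: (i) infinite intersections are empty because two such cylinders with different $\alpha$ are disjoint and, for fixed $\alpha$, distinct $H,H'\in\V_\Lambda$ have $H\cap H'$ finite so $(\bar H\setminus F)\cap(\bar H'\setminus F')=\emptyset$ once $j$ is large — this forces another refinement by $j$, already built in; (ii) is where I use $F^{\alpha,H}_j$, whose complement in $\F_\Lambda(\alpha)\cap H$ is exactly what must avoid the used words, but words $\gamma$ with $\gamma_k\in H\setminus F^{\alpha,H}_j$ could still be used — here I invoke that $Z_\Lambda(\alpha\bar H, F)$ as a \emph{set} does not ``use'' individual letters beyond $\bar H\setminus F$, so I must be precise about the notion ``used in'' from Lemma~\ref{lem:locally_finite}; (iii) the finite-overlap condition, which for fixed $\alpha$ follows from pairwise-finite intersections of blurred sets plus hypothesis~\ref{theo:Metrizable_i} or~\ref{theo:Metrizable_ii}.

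The main obstacle I anticipate is precisely this combinatorial bookkeeping in condition (ii)--(iii) of Lemma~\ref{lem:locally_finite}: arranging the countably many ``truncation levels'' $j$ and the countably many blurred sets (case~\ref{theo:Metrizable_i}) or the finitely-many-overlaps structure (case~\ref{theo:Metrizable_ii}) into a genuinely \emph{countable} family of \emph{locally finite} subfamilies, rather than merely a point-countable one. The two hypotheses are designed to make this possible in slightly different ways, so the proof likely splits into two parallel arguments sharing the exhaustion-of-$F$ device; isolating the minimal common combinatorial lemma and then dispatching each case is the delicate part, while the appeals to Nagata--Smirnov, Lemma~\ref{lem:locally_finite}, and first countability are routine.
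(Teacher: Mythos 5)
Your overall skeleton (Nagata--Smirnov via propositions~\ref{prop:Hausdorff} and~\ref{prop:Regular}, plus Lemma~\ref{lem:locally_finite}, stratification by word length, and fixed exhaustions of the countable sets $\F_\Lambda(\alpha)\cap H$) is exactly the paper's, but there is a genuine gap in your treatment of the type-2 cylinders, concentrated in condition (i) of Lemma~\ref{lem:locally_finite}. You group, for fixed $k$ and $j$, all cylinders $Z_\Lambda(\alpha\bar H,F^{\alpha,H}_j)$ with $\alpha\in B_k(\Lambda)$ and $H$ ranging over \emph{all} of $\V_\Lambda$, and you claim infinite intersections are empty because ``$(\bar H\setminus F)\cap(\bar H'\setminus F')=\emptyset$ once $j$ is large.'' This fails: the truncation level $j$ is fixed within each subfamily, and a single letter $h_0$ may belong to infinitely many (even uncountably many) blurred sets --- pairwise-finite intersections do not forbid this, and hypothesis~\ref{theo:Metrizable_ii} explicitly allows it. Then the point $(\alpha h_0\ldots)$ lies in infinitely many members of your subfamily, so it is not locally finite, and no re-indexing by $j$ repairs it. Under hypothesis~\ref{theo:Metrizable_i} the escape is cheap (index the subfamilies by $H$ as well, which the paper does via $\mathfrak{G}_{k,\ell,n,H}$; countability of $\V_\Lambda$ keeps the union countable), but under hypothesis~\ref{theo:Metrizable_ii} that option is unavailable, and your sketch contains no substitute. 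The paper's device is to excise from each $\bar H$ the finite set $S_H:=H\cap\bigcup_{G\neq H}G$ (finite precisely by hypothesis~\ref{theo:Metrizable_ii}), using only cylinders $Z_\Lambda(\alpha\bar H,F^H_\ell\cup S_H)$; this makes the sets $H\setminus(F^H_\ell\cup S_H)$ pairwise disjoint across $H$, so the uncountable family is locally finite, and one must then separately verify that the reduced collection, together with the singletons $Z_\Lambda(\alpha h)$ for $h\in S_H$, is still a basis. Neither step appears in your plan.

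Two smaller points. First, you misstate hypothesis~\ref{theo:Metrizable_ii} as ``each $H$ meets only finitely many other blurred sets''; the actual hypothesis is that the set of \emph{elements} of $H$ shared with other blurred sets is finite, which is weaker, and the distinction is precisely what forces the $S_H$ construction rather than a counting argument over the sets $G$ meeting $H$. Second, for the type-1 cylinders $Z_\Lambda(\alpha)$ your ``distribute the extensions into countably many subfamilies'' can be made to work, but the paper's route is cleaner and worth knowing: restrict, for each $k$, to words using only symbols outside a tail $P_k$ (resp.\ $Q_k$) of a fixed enumeration of $B_1(\V_\Lambda)$ (resp.\ of the union of tails of each $B_1(\Lambda)\cap H$), so that condition (ii) of Lemma~\ref{lem:locally_finite} holds because the words simply never use letters deep inside any blurred set.
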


\begin{proof}

We just need to prove that if $\Sigma_\Lambda$ is first countable  (that is, for all $H\in \V_\Lambda$ we have $H\cap B_1(\Lambda)$ countable - Proposition \ref{prop:first_countable}) and $i.$ or $ii.$ holds, then $\tau_{\Sigma_\Lambda}$ has a countably locally finite basis.\\

First, we observe that if $\V_\Lambda$ is empty, then $\Sigma_\Lambda=\Lambda$ which is metrizable (in this case both $i.$ and $ii.$ hold trivially). Hence, we will assume $\V_\Lambda$ nonempty.\\

Recall that a basis of $\tau_{\Sigma_\Lambda}$ is $\mathfrak{B}_{\Sigma_\Lambda}:=\mathfrak{C}\cup\mathfrak{G}$, where 
 $$\mathfrak{C}:=\{Z_\Lambda(\alpha): \alpha \in B(\Lambda)\}$$
 and
 $$\mathfrak{G}:=\{Z_\Lambda(\alpha \bar H ,F): \alpha \in B(\Lambda),\ H\in\V_\Lambda,\ F \subset \F_\Lambda(\alpha)\cap H \text{ finite }\}.$$

\begin{enumerate}
\item Since $\Sigma_\Lambda$ is first countable, we have that $\V_\Lambda$ countable implies $B_1(\V_\Lambda)$ is countable.\\

Consider an enumeration $B_1(\V_\Lambda)=\{a^1,a^2,a^3,...\}$ and for each $k\ge 1$ define $P_k:=\{a^k,a^{k+1},a^{k+2},...\}$. Now, for each $k,n\geq 1$ define
$$\mathfrak{C}_{k,n}:=\{Z_\Lambda(\alpha):\ \alpha\in B_n(\Lambda)\text{ contains just symbols of } B_1(\Lambda)\setminus P_k\}.$$
It follows that each $\mathfrak{C}_{k,n}$ satisfies all the three conditions of Lemma \ref{lem:locally_finite} (condition $iii.$ is satisfied by vacuity).

Hence,  $$\mathfrak{C}=\bigcup_{k,n\geq 1}\mathfrak{C}_{k,n}$$ is countably locally finite.

Now, for each $H\in\V_\Lambda$ consider an enumeration of $B_1(\Lambda)\cap H=\{h^1,h^2,h^3,...\}$ and define $F_\ell^H:=\{h^1,..,h^\ell\}$. For each $k\geq 1$, $\ell,n\geq 0$ and $H\in\V_\Lambda$, define
 $$\mathfrak{G}_{k,\ell,n,H}:=\{Z_\Lambda(\alpha\bar H,F_\ell^H):\ \alpha\in B_n(\Lambda)\text{ contains just symbols of } B_1(\Lambda)\setminus P_k,\ |\F_\Lambda(\alpha)\cap H|=\infty\}.$$
We have that $\mathfrak{G}_{k,\ell,n,H}$ satisfies all the three conditions of Lemma \ref{lem:locally_finite} and so it is locally finite. Thus, 
$$\mathfrak{G}=\bigcup_{k\geq 1,\ \ell,n\geq 0,\ H\in\V_\Lambda}\mathfrak{G}_{k,\ell,n,H}$$  is countably locally finite.\\

Therefore, $\mathfrak{B}_{\Sigma_\Lambda}$ is a  countably locally finite basis, and so $\Sigma_\Lambda$ is metrizable.

\item If $\V_\Lambda$ is countable, then the result follows from the case $i.$. Suppose $\V_\Lambda$ is an uncountable family. Given $H\in\V_\Lambda$, define
 \begin{equation}\label{eq:S_H equiv}S_H:=\{h\in H:\ \exists G\in\V_\Lambda\text{ s.t. } G\neq H\text{ and } h\in G\}=H\cap\bigcup_{G\in\V_\Lambda,\ G\neq H} G,\end{equation}
which is finite by hypothesis.

Since $\Sigma_\Lambda$ is first countable, for each $H\in \V_\Lambda$ we can take an enumeration of $B_1(\Lambda)\cap H:=\{h^1,h^2,h^3,...\}$ and for $n\geq 1$ we define $F_n^H:=\{h^1,h^2,...,h^n\}$ and $O_n^H:=\{h^n,h^{n+1},h^{n+2},...\}$. Now, for each $n\geq 1$, consider the set 
$$Q_n:=\bigcup_{\ H\in\V_\Lambda}O_n^H.$$
We remark that $Q_n$ is an uncountable set, since there are uncountable many sets in $\V_\Lambda$.

For each $k,n\geq 1$ define
$$\mathfrak{C}_{k,n}:=\{Z_\Lambda(\alpha):\ \alpha\in B_n(\Lambda)\text{ contains just symbols of } B_1(\Lambda)\setminus Q_k\},$$
which is a locally finite family. In fact, conditions $i.$ and $iii.$ of Lemma \ref{lem:locally_finite} follow directly, while  condition $ii.$ follows from the fact that any word $\alpha$ defining a generalized cylinder $Z_\Lambda(\alpha)$ is such that it can use only a finite number of symbols lying in any $H\in \V_\Lambda$. Indeed, with respect to a given $H$, $\alpha$ can use at most the $k-1$ symbols of $F_{k-1}^H$, and the finite number of elements in $\bigcup_{G\in\V_\Lambda}\big(S_H\cap F_{k-1}^G\big)$.

Thus,  $$\mathfrak{C}=\bigcup_{k,n\geq 1}\mathfrak{C}_{k,n}$$ is a countably locally finite family.\\

Now, for each $k\geq 1$ and $\ell,n\geq 0$ define
 $$\mathfrak{G}_{k,\ell,n}:=\{Z_\Lambda(\alpha\bar H,F_\ell^H\cup S_H):\ H\in\V_\Lambda,\ \alpha\in B_n(\Lambda)\text{ has just symbols of } B_1(\Lambda)\setminus Q_k,\ |\F_\Lambda(\alpha)\cap H|=\infty\}.$$
It follows that though $\mathfrak{G}_{k,\ell,n}$ contains uncountable many sets, it is a locally finite family. In fact, condition $i.$ of  Lemma \ref{lem:locally_finite} follows from the fact that we can only have subfamilies of generalized cylinders $\{Z_\Lambda(\alpha^t\bar H^t,F_\ell^{H^t}\cup S_{H^t}):\ t\geq 1\}\subset \mathfrak{G}_{k,\ell,n}$ if and only if $\alpha^t=\alpha^s$ for all $s,t\geq 1$ and $\bigcap_{t\geq 1}  \big(H^t\setminus (F_\ell^{H^t}\cup S_{H^t})\big) \neq\emptyset$, which is avoided by the fact that for fixed $\alpha$, each $H^t$ of $\V_\Lambda$ is used just once in a generalized cylinder of $\mathfrak{G}_{k,\ell,n}$, and $H^t\setminus (F_\ell^{H^t}\cup S_{H^t})$ is disjoint of $H^s\setminus (F_\ell^{H^s}\cup S_{H^s})$ if $t\neq s$. Condition $ii.$ of Lemma \ref{lem:locally_finite} is checked as made for $\mathfrak{C}_{k,n}$. Finally, condition $iii.$ of Lemma \ref{lem:locally_finite} comes from \eqref{eq:S_H equiv}.

Hence, we can define  $$\hat{\mathfrak{G}}=\bigcup_{k\geq 1,\ \ell,n\geq 0}\mathfrak{G}_{k,\ell,n}$$ which is a countably locally finite family, and so $$\hat{\mathfrak{B}}_{\Sigma_\Lambda}:=\mathfrak{C}\cup\hat{\mathfrak{G}}$$ is also a countably locally finite family.

We notice that $\hat{\mathfrak{G}}$ is also a basis for the topology $\tau_{\Sigma_\Lambda}$ in spite of $\hat{\mathfrak{G}}\subsetneq\mathfrak{G}$. In order to check this, it is sufficient to observe that any $Z_\Lambda(\alpha_0...\alpha_{k-1} \bar H, F)\in \mathfrak{G}$ can be written as
$$Z_\Lambda(\alpha_0...\alpha_{k-1} \bar H, F)=Z_\Lambda(\alpha_0...\alpha_{k-1} \bar H, F_\ell^H\cup S_H)\cup\bigcup_{h\in (F_\ell^H\cup S_H)\setminus F} Z_\Lambda (\alpha_0...\alpha_{k-1} h)$$ where $F_\ell^H$ is chosen such that $F\subset F_\ell^H\cup S_H$.

\end{enumerate}

\end{proof}

The next corollary can be obtained as a direct consequence of Proposition \ref{prop:2nd_count} and Theorem \ref{theo:Metrizable}.\ref{theo:Metrizable_i} (as well as it could be derived from propositions \ref{prop:Hausdorff} and \ref{prop:Regular}, and then applying the Urysohn metrization theorem):

\begin{cor}\label{cor:2nd_count-metr} If a blur shift is second countable, then it is metrizable.
\end{cor}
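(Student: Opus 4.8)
The plan is to derive Corollary \ref{cor:2nd_count-metr} directly from the machinery already in place, using either of the two routes indicated in the statement. The cleanest route is via Theorem \ref{theo:Metrizable}.\ref{theo:Metrizable_i}: if $\Sigma_\Lambda$ is second countable, then by Proposition \ref{prop:2nd_count} both $B_1(\Lambda)$ and $\V_\Lambda$ are countable. Countability of $B_1(\Lambda)$ forces $H\cap B_1(\Lambda)$ to be countable for every $H\in\V_\Lambda$, so by Proposition \ref{prop:first_countable} the space $\Sigma_\Lambda$ is first countable; and countability of $\V_\Lambda$ is exactly hypothesis \ref{theo:Metrizable_i} of Theorem \ref{theo:Metrizable}. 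Hence Theorem \ref{theo:Metrizable} applies and $\Sigma_\Lambda$ is metrizable.

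Alternatively, one can invoke the Urysohn metrization theorem: a regular, Hausdorff, second countable space is metrizable. Propositions \ref{prop:Hausdorff} and \ref{prop:Regular} give that the full blur shift $\Sigma_{\A^\N}$ is Hausdorff and regular, and these properties are hereditary, so they pass to the subspace $\Sigma_\Lambda$ with its induced topology $\tau_{\Sigma_\Lambda}$. Combined with the hypothesis of second countability, Urysohn's theorem yields metrizability immediately. I would present the first route as the main argument since it ties the corollary to the results of this section, and perhaps mention the Urysohn route in one sentence as an alternative.

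There is essentially no obstacle here — the work has all been done in Proposition \ref{prop:2nd_count}, Proposition \ref{prop:first_countable}, and Theorem \ref{theo:Metrizable}. The only point requiring a moment's care is the implication ``second countable $\Rightarrow$ first countable'' in the blur-shift setting, but this is standard (a countable global basis restricts to a countable local basis at each point) and is in any case subsumed by the explicit criterion of Proposition \ref{prop:first_countable} once one observes that countability of $B_1(\Lambda)$ trivially makes each $H\cap B_1(\Lambda)$ countable. So the proof is a short chain of citations, which is why the paper labels it a corollary with a direct proof.

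\begin{proof}
Suppose $\Sigma_\Lambda$ is second countable. By Proposition \ref{prop:2nd_count}, both $B_1(\Lambda)$ and $\V_\Lambda$ are countable. In particular, for every $H\in\V_\Lambda$ the set $H\cap B_1(\Lambda)$ is countable, so by Proposition \ref{prop:first_countable} the space $\Sigma_\Lambda$ is first countable. Since moreover $\V_\Lambda$ is countable, condition \ref{theo:Metrizable_i} of Theorem \ref{theo:Metrizable} holds, and hence $\Sigma_\Lambda$ is metrizable.

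(Alternatively, by propositions \ref{prop:Hausdorff} and \ref{prop:Regular} the space $\Sigma_\Lambda$ is regular and Hausdorff, being a subspace of $\Sigma_{\A^\N}$; together with second countability, the Urysohn metrization theorem gives metrizability directly.)
\end{proof}
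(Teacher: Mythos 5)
Your proof follows exactly the route the paper indicates: it derives the corollary from Proposition \ref{prop:2nd_count} together with Theorem \ref{theo:Metrizable}.\ref{theo:Metrizable_i} (filling in the small step that countability of $B_1(\Lambda)$ gives first countability via Proposition \ref{prop:first_countable}), and also notes the alternative via propositions \ref{prop:Hausdorff} and \ref{prop:Regular} plus the Urysohn metrization theorem, which is precisely the parenthetical remark the paper makes. The argument is correct and matches the paper's intended proof.
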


\qed

\begin{ex}\label{ex:metrizable_blur_shifts} The full blur shifts of examples \ref{ex:blurshift_count_inf_intersc}, \ref{ex:prime_shift}, \ref{ex:intersect-all_shift} and \ref{ex:ultragraph_shift} are metrizable due to Corollary \ref{cor:2nd_count-metr}. 
The blur shift of Example \ref{ex:blurshift_uncount_inf_intersc} is metrizable due to Theorem \ref{theo:Metrizable}.\ref{theo:Metrizable_i}, while the blur shift of Example \ref{ex:blurshift_uncount_no_intersc} is metrizable due to Theorem \ref{theo:Metrizable}.\ref{theo:Metrizable_ii}.
\end{ex}

We remark that contrarily to what happens in the particular contexts of Ott-Tomforde-Willis shifts (where metrizability is equivalent to countability), in the general context of blur shifts it remains open to determine a complete characterization of the metrizable shifts. We remark that Ott-Tomforde-Willis shifts are blur shifts constrained to have at most one blurred set which is all the alphabet. In the general context, when we can consider any quantity of blurred sets and cardinality for the alphabet, Theorem \ref{theo:Metrizable} shows that such characterization is a bit more complicated since it depends on finding a general strategy under which one can characterize whether or not a topology has a countably locally finite basis. In Subsection \ref{subsec:metric} we will construct a metric for blur shifts over countable alphabets, however it also remains an open problem to present an explicit metric for other cases of metrizable blur shifts.

Although it remains an open problem to give a complete characterization of metrizable blur shifts, for the particular cases of compact blur shifts we can obtain such  characterization of metrizability. The following corollary uses two results that will be proved in Subsection \ref{subsec:Blur_shift_compactfication} to prove a result analogous to  \cite[Corollary 2.18]{Ott_et_Al2014}. In particular, it shows a relationship between first countability, metrizability and compactness.

\begin{cor}\label{cor:equiv_count_metr}
Let $\Sigma_\Lambda$ be a compact blur shift. The following statements are equivalent:
\begin{enumerate}
\item $B_1(\Lambda)$ is countable;

\item $\Sigma_\Lambda$ is first countable;

\item $\Sigma_\Lambda$ is second countable;

\item $\Sigma_\Lambda$ is separable;

\item $\Sigma_\Lambda$ is metrizable.
\end{enumerate}
\end{cor}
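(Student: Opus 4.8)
The plan is to establish the cycle of implications
$(3)\Rightarrow(2)\Rightarrow(1)\Rightarrow(3)$, together with $(3)\Rightarrow(5)\Rightarrow(2)$ and $(3)\Leftrightarrow(4)$, so that all five statements become equivalent. Most of the work is already available: $(3)\Rightarrow(2)$ is trivial (second countable implies first countable), $(3)\Rightarrow(5)$ is exactly Corollary~\ref{cor:2nd_count-metr}, $(5)\Rightarrow(2)$ holds because every metric space is first countable, and $(3)\Leftrightarrow(4)$ follows from Proposition~\ref{prop:separability} combined with the direction $(1)\Leftrightarrow(3)$ we are about to prove (alternatively, separable metrizable spaces are second countable, but we avoid circularity by routing through $B_1(\Lambda)$). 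So the genuine content is the chain $(2)\Rightarrow(1)\Rightarrow(3)$.

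For $(1)\Rightarrow(3)$: by Proposition~\ref{prop:2nd_count}, second countability is equivalent to $B_1(\Lambda)$ and $\V_\Lambda$ being countable, so given $(1)$ I only need that $\V_\Lambda$ is countable. This is where compactness enters. If $B_1(\Lambda)$ is countable then each $H\in\V_\Lambda$ satisfies $H\cap B_1(\Lambda)\subset B_1(\Lambda)$, hence is countable, so by Proposition~\ref{prop:first_countable} the space is automatically first countable; but I still must bound the number of blurred sets. The key observation is that the sets $\{Z_\Lambda(\bar H)\}_{H\in\V_\Lambda}$ — or more precisely a suitable family of pairwise disjoint open sets built from them, using that distinct blurred sets meet in a finite set — form an open family indexed by $\V_\Lambda$; since $B_1(\Lambda)$ is countable there are only countably many symbols available, and an uncountable $\V_\Lambda$ would force (by the finite-intersection property of $\V$) uncountably many pairwise ``almost disjoint'' subsets of a countable set, which is impossible. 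Concretely: pick for each $H\in\V_\Lambda$ an element $h_H\in H\cap B_1(\Lambda)$; a standard $\Delta$-system / almost-disjointness argument on subsets of the countable set $B_1(\Lambda)$ shows $\V_\Lambda$ is countable. (It may be cleaner to invoke here the two results from Subsection~\ref{subsec:Blur_shift_compactfication} alluded to in the statement, presumably characterizing compactness in terms of $\V_\Lambda$ and the follower sets, which would directly give countability of $\V_\Lambda$ from compactness plus countability of $B_1(\Lambda)$.)

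For $(2)\Rightarrow(1)$: suppose $\Sigma_\Lambda$ is first countable but $B_1(\Lambda)$ is uncountable. By Proposition~\ref{prop:first_countable}, first countability forces $H\cap B_1(\Lambda)$ to be countable for every $H\in\V_\Lambda$. Now invoke compactness: cover $\Sigma_\Lambda$ by the basic clopen sets $Z_\Lambda(a)$ for $a\in B_1(\Lambda)$ together with the sets $Z_\Lambda(\bar H)$ for $H\in\V_\Lambda$ — every point lies in one of these. A finite subcover exists, so all but finitely many symbols $a\in B_1(\Lambda)$ must be absorbed into finitely many sets $Z_\Lambda(\bar H_1),\dots,Z_\Lambda(\bar H_m)$; but each $\bar H_j$ contains only countably many symbols of $B_1(\Lambda)$, so $B_1(\Lambda)$ is covered by a finite set plus finitely many countable sets, hence is countable — contradiction. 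This closes the cycle. The main obstacle I anticipate is $(1)\Rightarrow(3)$, i.e.\ deriving countability of $\V_\Lambda$: one must use compactness essentially (without it the implication fails, as an infinite alphabet with a large almost-disjoint family of blurred sets shows), and the cleanest route is to cite the compactness characterization from Subsection~\ref{subsec:Blur_shift_compactfication} rather than reprove an almost-disjoint-family estimate by hand.
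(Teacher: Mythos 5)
Your overall architecture matches the paper's, and most individual steps are fine: $(3)\Rightarrow(2)$, $(3)\Rightarrow(5)$ via Corollary~\ref{cor:2nd_count-metr}, and your $(2)\Rightarrow(1)$ argument (cover $\Sigma_\Lambda$ by the $Z_\Lambda(a)$ and $Z_\Lambda(\bar H)$, extract a finite subcover, and conclude that $B_1(\Lambda)$ is a finite set plus finitely many countable sets) is exactly the content of the paper's route through Proposition~\ref{prop:first_countable} and Theorem~\ref{theo:compactness}. Your $(5)\Rightarrow(2)$ (metric spaces are first countable) is a mildly different closing of the cycle than the paper's $(5)\Rightarrow(1)$ via Corollary~\ref{cor:compact_matrizable}, but it is equally valid.

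The genuine gap is in $(1)\Rightarrow(3)$. The concrete argument you give rests on the claim that a countable set cannot admit uncountably many pairwise almost-disjoint infinite subsets. That claim is false: there are almost-disjoint families of infinite subsets of $\N$ of cardinality $2^{\aleph_0}$ (e.g.\ for each irrational $r$ take a set of rationals converging to $r$), and indeed you contradict yourself in your final sentence, where you correctly observe that a large almost-disjoint family of blurred sets over a countable alphabet defeats the implication in the absence of compactness. So no counting argument on $B_1(\Lambda)$ alone can yield countability of $\V_\Lambda$; compactness must be used directly, and your written argument never actually uses it at this step. The fix is the one you parenthetically gesture at but do not execute: Theorem~\ref{theo:compactness} states that compactness of $\Sigma_\Lambda$ forces $\V_\Lambda$ to be \emph{finite} (it must be a finite family covering all but finitely many elements of $B_1(\Lambda)$), and then Proposition~\ref{prop:2nd_count} with $B_1(\Lambda)$ countable gives second countability. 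This is precisely what the paper does, and note that it gives finiteness of $\V_\Lambda$ outright, with no need for countability of $B_1(\Lambda)$ at that stage. As written, the key implication of your proof is not established.
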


\begin{proof} \phantom\\

\begin{description}

\item[$iii.\Rightarrow ii.$] It is direct.


\item[$ii.\Rightarrow i.$]  Since $\Sigma_\Lambda$  is first countable, then from Proposition \ref{prop:first_countable} we have that  $H\cap B_1(\Lambda)$ is countable for all $H\in\V_\Lambda$. 
On the other hand,
from the compactness of $\Sigma_\Lambda$, applying Theorem \ref{theo:compactness}, we get that $\V_\Lambda$ is a finite family that covers $B_1(\Lambda)\setminus  \{a_1,...,a_\ell\}$, for some a finite set $\{a_1,...,a_\ell\}\subset B_1(\Lambda)$. Therefore $B_1(\Lambda)=\{a_1,...,a_\ell\}\cup\bigcup_{H\in\V_\Lambda}(H\cap B_1\big(\Lambda)\big)$ is countable.

\item[$i.\Rightarrow iii.$] From Theorem \ref{theo:compactness} the compactness of $\Sigma_\Lambda$ implies $\V_\Lambda$ being finite. Thus, from Proposition \ref{prop:2nd_count}, if $B_1(\Lambda)$ is also countable, then $\Sigma_\Lambda$ is second countable.

\item[$iii.\Rightarrow v.$] From Corollary \ref{cor:2nd_count-metr}.

\item[$v.\Rightarrow i.$] By hypothesis $\Sigma_\Lambda$ is compact. If it is also metrizable, then from Corollary \ref{cor:compact_matrizable} we get that the alphabet shall be countable.

\item[$i.\Leftrightarrow iv.$] It is given by Proposition \ref{prop:separability}.

\end{description}
\end{proof}

 \subsection{Second-countable blur shifts and metrics}\label{subsec:metric}
 
In this subsection, we will assume $\Sigma_\A$ being second countable and we will construct a family of metrics for the topology $\tau_{\Sigma_{\A^\N}}$ (and so for any blur shift over the same alphabet and same blurred sets). Such construction follows ideas of \cite{GOUG2020-2} and \cite{Ott_et_Al2014}, but uses more direct and intuitive construction than the ones presented in these references. 

We recall that the case $\Sigma_\A$ being second countable corresponds to the particular case of metrizable blur shifts given by Corollary \ref{cor:2nd_count-metr}. It remains an open problem to construct metrics for other cases.\\
 
Let \begin{equation}\label{eq:prefix}\mathfrak{p}:=\{(\alpha), (\alpha\bar H):\ \alpha\in B(\A^\N),\  \bar H\in\VV\}.\end{equation}
 
\begin{defn}\label{defn:prefix_sequences} We say that $p\in\mathfrak{p}$ is a {\bf prefix} of a sequence $\x$ if and only if $\x\in Z(p)$.
Given $p,q\in\mathfrak{p}$ we will say that $p$ is prefix of $q$ if and only if $Z(q)\subset Z(p)$.

\end{defn}
 
Note that, using the language of prefixes, we have that the cylinders $Z(\alpha)$ and $Z(\alpha \bar H)$ are the sets of all sequences in $\Sigma_{\A^\N}$ that have $(\alpha)$ and $(\alpha \bar H)$ as prefix, respectively. Furthermore, any element of $\A^\N$ has infinitely many prefixes in $\mathfrak{p}$, while an element of $\mathcal{L}_n(\Lambda)$, with $n\in\N$, has exactly $n+1$ prefixes in $\mathfrak{p}$.

From Proposition \ref{prop:2nd_count} both $\A$ and $\V$ are countable, and then $\mathfrak{p}$ is also countable. Consider an enumeration of $\mathfrak{p}$: $$\mathfrak{p}:=\{p_1,p_2,p_3,\ldots\}.$$
For each specific enumeration of $\mathfrak{p}$, we define $d:\Sigma_{\A^\N}\times\Sigma_{\A^\N}\to[0,+\infty)$ by
\begin{equation}\label{eq:defn_metric}d(\x,\y) := \left\{\begin{array}{lcl} 1/2^i &,&  \text{where $i \in \N$ is the smallest integer such that $p_i$ is prefix either of  $\x$ or of $\y$}, \\ 0 &,& \text{if $\x = \y$.}\end{array}\right.\end{equation}

Let us check that $d$ is a metric on $\Sigma_{\A^\N}$. It is direct that $d(\x,\y)=d(\y,\x)\ge 0$ for all $\x,\y\in \Sigma_{\A^\N}$, and $d(\x,\y)= 0$ if and only if $\x=\y$. To check the triangular inequality, given $\x,\y,\z\in\Sigma_{\A^\N}$, suppose that $d(\x, \z) = 2^{-i},$ where $i$ is the first index such that $p_i$ is a prefix of only one of $\x$ or $\z$. Without loss of generality, assume that $p_i$ is a prefix of $\x$ and is not of $\z$. We have then two possibilities: Either $p_i$ is a prefix of $\y$ or it is not. In the former case, we have $d(\y, \z)=1/2^j$ where $j=\min\{k\geq 1:\ p_k \text{ is prefix either of  } \y \text{ or of } \z\}\leq i$, and therefore $d(\x, \z)\leq d(\y, \z)$. In the latter case, when $p_i$ is not a prefix of $\y$, then $d(\x, \y)=1/2^\ell$, where $\ell=\min\{k\geq 1:\ p_k \text{ is prefix either of  } \x \text{ or of } \y\}\leq i$, and so we have that $d(\x, \z) \leq d(\x, \y)$. Hence, we conclude that $d(\x, \z)\leq d(\x, \y)+d(\y, \z)$.\\

Denote as $B_{d}(\x,\varepsilon)$ the open ball for the metric $d$, centered in $\x$ and with radius $\varepsilon>0$. Given $\varepsilon>0$, let $k$ be the smallest positive integer such that $1/2^k< \varepsilon$. Observe that a point $\y\neq\x$ belongs to $B_{d}(\x,\varepsilon)$ if and only if the first $p_j\in\mathfrak{p}$ which is prefix either of $\x$ or of $\y$ is such that $j\geq k$. \\

\begin{prop} The metric $d$, defined from any enumeration of $\mathfrak{p}$, and the generalized cylinders generate the same topology.
\end{prop}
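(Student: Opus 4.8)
The plan is to show that the metric $d$ and the generalized cylinders induce the same topology by proving two inclusions at the level of basic open sets: every generalized cylinder is open in the metric topology, and every metric ball $B_d(\x,\varepsilon)$ is open in the topology generated by generalized cylinders. Since $\Sigma_{\A^\N}$ is second countable here, the basis of generalized cylinders is countable, and moreover each generalized cylinder corresponds (via the enumeration $\mathfrak p=\{p_1,p_2,\dots\}$) to some $Z(p_i)$, so it suffices to handle the sets $Z(p_i)$.

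First I would show $Z(p_i)$ is $d$-open. Take $\x\in Z(p_i)$; I claim $B_d(\x,1/2^i)\subset Z(p_i)$. Indeed, if $\y\in B_d(\x,1/2^i)$ and $\y\neq\x$, then by the remark preceding the proposition, the smallest index $j$ such that $p_j$ is a prefix of exactly one of $\x,\y$ satisfies $j>i$; hence $p_i$, being a prefix of $\x$, must also be a prefix of $\y$ (otherwise $p_i$ would be a prefix of exactly one of them, forcing $j\le i$). Thus $\y\in Z(p_i)$. Here I use Definition~\ref{defn:prefix_sequences}: $\y\in Z(p_i)$ is exactly the statement that $p_i$ is a prefix of $\y$. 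This gives that the generalized-cylinder topology is coarser than (contained in) the metric topology.

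Conversely I would show each ball $B_d(\x,\varepsilon)$ is a union of generalized cylinders, hence open in $\tau_{\Sigma_{\A^\N}}$. Fix $\x$ and $\varepsilon>0$, and let $k$ be least with $1/2^k<\varepsilon$. By the remark before the proposition, $\y\in B_d(\x,\varepsilon)$ iff the first $p_j\in\mathfrak p$ that is a prefix of exactly one of $\x,\y$ has $j\ge k$; equivalently, $\x$ and $\y$ share all the prefixes among $p_1,\dots,p_{k-1}$. There are two cases. If $\x\in\A^\N$, it has infinitely many prefixes in $\mathfrak p$, so among $p_1,\dots,p_{k-1}$ there is a longest-matching prefix $p$ of $\x$ of the form $(\x_{[0,m]})$ with $m$ large; then $B_d(\x,\varepsilon)\supset Z(\x_{[0,m]})$ for $m$ large enough, and in fact one checks $B_d(\x,\varepsilon)$ equals a union of such cylinders (one can realize it as $Z(\x_{[0,m]})$ for suitable $m$, after discarding the finitely many $p_1,\dots,p_{k-1}$ that are prefixes of $\x$). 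If $\x=(x_0\dots x_{n-1}\h\h\dots)\in\partial\A^\N$, it has exactly $n+1$ prefixes in $\mathfrak p$, namely $(\epsilon),(x_0),\dots,(x_0\dots x_{n-1}),(x_0\dots x_{n-1}\bar H)$; discarding those among $p_1,\dots,p_{k-1}$, the condition ``$\y$ shares all of $p_1,\dots,p_{k-1}$ with $\x$'' becomes ``$\y$ has a certain finite list of the $p_i$'s as prefixes and avoids the others,'' which cuts out a generalized cylinder of the form $Z(x_0\dots x_{\ell-1})$ or $Z(x_0\dots x_{\ell-1}\bar H,F)$ for appropriate $\ell\le n$ and finite $F$ — the set $F$ being exactly the finite collection of first-coordinate letters appearing in those $p_j$ ($j<k$) of the form $(x_0\dots x_{\ell-1}a)$ with $a\in H$. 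In either case $\x$ lies in a generalized cylinder contained in $B_d(\x,\varepsilon)$, so $B_d(\x,\varepsilon)$ is $\tau_{\Sigma_{\A^\N}}$-open.

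The main obstacle is the bookkeeping in the second inclusion: one must carefully describe, given the finite prefix list $p_1,\dots,p_{k-1}$, which of these are prefixes of $\x$ and translate ``agreeing with $\x$ on all of $p_1,\dots,p_{k-1}$'' into a single generalized cylinder, which requires isolating the right forbidden finite set $F\subset H$ at the ``blurred'' coordinate and checking no deeper prefix causes trouble (using that a boundary point has only finitely many prefixes). Once that translation is made precise, both topologies have, for each point, mutually cofinal neighborhood families, so they coincide.
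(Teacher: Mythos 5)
Your overall strategy --- mutual refinement of the basis of generalized cylinders and the basis of metric balls --- is exactly the paper's, and your second inclusion is essentially the paper's argument. The first inclusion, however, has a genuine gap. You reduce it to the sets $Z(p_i)$ by asserting that ``each generalized cylinder corresponds (via the enumeration) to some $Z(p_i)$.'' That is false: $\mathfrak{p}$ contains only prefixes of the forms $(\alpha)$ and $(\alpha\bar H)$, so the sets $Z(p)$, $p\in\mathfrak{p}$, are precisely the generalized cylinders with $F=\emptyset$. A cylinder $Z(\alpha\bar H,F)$ with $F\neq\emptyset$ is not of this form, nor is it a union of such sets: the point $\x=(\alpha\h\h\ldots)$ lies in $Z(\alpha\bar H,F)$, but every $p\in\mathfrak{p}$ with $\x\in Z(p)$ constrains coordinates only up to position $|\alpha|$ and does not exclude the letters of $F$ there, so $Z(p)$ always contains points $(\alpha h\ldots)$ with $h\in F$ and hence is not contained in $Z(\alpha\bar H,F)$. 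Consequently your argument only shows that the strictly coarser topology generated by $\{Z(\alpha),\,Z(\alpha\bar H)\}$ is contained in the metric topology; the inclusion of $\tau_{\Sigma_{\A^\N}}$ in the metric topology is not established. The missing case is the one the paper treats explicitly: for $\x=(\alpha\h\h\ldots)\in Z(\alpha\bar H,F)$ one must pick $k$ to be the \emph{largest} index in the enumeration among the finitely many prefixes in $\{(\alpha h):h\in F\}\cup\{(\alpha\bar H)\}$; then any $\y\in B_d(\x,1/2^k)$ agrees with $\x$ on all of these, so $(\alpha\bar H)$ is a prefix of $\y$ while no $(\alpha h)$ with $h\in F$ is, forcing $\y\in Z(\alpha\bar H,F)$. (Points of $Z(\alpha\bar H,F)$ whose $|\alpha|$-th coordinate lies in $H\setminus F$ are covered by your $Z(p_i)$ argument, since they lie in some $Z(\alpha h')\subset Z(\alpha\bar H,F)$.)

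Two smaller imprecisions in your second inclusion, both of the bookkeeping type you flag yourself. For $\x\in\A^\N$ the cylinder $Z(\x_{[0,m]})$ must be deep enough to \emph{decide all} of $p_1,\ldots,p_{k-1}$ --- including those that are not prefixes of $\x$ but whose tested coordinates would otherwise extend past position $m$ --- not merely to extend the longest prefix of $\x$ occurring among them; the paper achieves this by requiring that $p_k$ not be a prefix of any $p_j$ with $j<k$. For $\x\in\partial\A^\N$ your forbidden set $F$ must also absorb $H\cap G$ for every prefix $p_j=(x_0\ldots x_{n-1}\bar G)$ with $G\neq H$ and $j\leq k$ (otherwise a $\y$ in your cylinder could acquire such a $p_j$ as a prefix even though $\x$ does not have it); your description of $F$ records only the word-type prefixes $(x_0\ldots x_{n-1}a)$.
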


\begin{proof}

First we need to show that for any given $\x\in\Sigma_{\A^\N}$ and a generalized cylinder of its local basis $\mathfrak{B}_{\x}$, there exists $\varepsilon>0$ such that  $B_{d}(\x,\varepsilon)$ is contained in that generalized cylinder. In fact, if $\x\in\A^\N$, and the generalized cylinder is $Z(x_0...x_n)$, then we just need take $k\geq 1$ such that $p_k\in\mathfrak{p}$ is the prefix $p_k=(x_0...x_n)$. Therefore, $\y\in B_{d}(\x,1/2^k)$ implies that the first $p_j\in\mathfrak{p}$ which is prefix either of $\x$ or of $\y$ is such that $j > k$, and so $p_k$ is also prefix of $\y$, which implies that $B_{d}(\x,1/2^k)\subset Z(p_k)=Z(x_0...x_n)$. 
If $\x\in\partial\A^\N$ and the generalized cylinder is $Z(x_0...x_{n-1}\bar H,F)$, take $k\geq 1$ as the greatest index such that  $p_k\in \{p\in\mathfrak{p}:\ p= (x_0...x_{n-1}h)\text{ with }h\in F\cup\{\bar H\}\}$. Hence, 
if $\y\in B_{d}(\x,1/2^k)$, since the first $p_j\in\mathfrak{p}$ which is prefix either of $\x$ or of $\y$ is such that $j > k$, and since $p_k$ comes later than $(x_0...x_{n-1}\bar H)$, prefix of $\x$, we conclude that $(x_0...x_{n-1}\bar H)$ shall also be prefix of $\y$. Furthermore, $p_k$ comes after all prefixes $(x_0...x_{n-1}h)$ with $h\in F$ which are not prefixes of $\x$ and therefore they could not be prefixes of $\y$ either. These facts imply that $\y\in Z(x_0...x_{n-1}\bar H,F)$, and so  $B_{d}(\x,1/2^k)\subset Z(x_0...x_{n-1}\bar H,F)$.\\

Now, we will show that for any given $\x\in\Sigma_{\A^\N}$ and $\varepsilon>0$, there exists a generalized cylinder in $\mathfrak{B}_{\Sigma_{\A^\N}}$ which contains $\x$ and is contained in $B_{d}(\x,\varepsilon)$. Indeed, if $\x\in\A^\N$, we can take $k\geq 1$ such that $1/2^k\leq\varepsilon$, $p_k$ is prefix of $\x$, and $p_k$ is not prefix of any $p_j$ with $j<k$ (we recall it is always possible to find such $k$ since elements of $\A^\N$ have infinitely many prefix in $\mathfrak{p}$). Thus, any $\y\in Z(p_k)$ is such that $p_k$ is its prefix and for each $j<k$ a prefix $p_j$ either is prefix of both $\x$ and $\y$ or is not prefix of any of them. Hence, we have that the first $j$ such $p_j$ is either prefix of $\x$ or of $\y$ is necessarily greater than $k$, and so $\y\in B_{d}(\x,1/2^k)\subset B_{d}(\x,\varepsilon)$, which means $Z(p_k)\subset B_{d}(\x,\varepsilon)$. On the other hand, if $\x=(x_0...x_{n-1}\h\h\h...)\in\partial\A^\N$, then we just need to take $k\geq 1$ such that $1/2^k\leq\varepsilon$ and consider the generalized cylinder $Z(x_0...x_{n-1}\bar H,F)$ where $F:=\{h\in H:\ \exists j\leq k\text{ s.t. }p_j=(x_0...x_{n-1}h...x_{n_j})\text{ or } p_j=(x_0...x_{n-1}\bar G)\text{ with }G\neq H\text{ and }h\in G\}$. Therefore, if $\y\in Z(x_0...x_{n-1}\bar H,F)$ it follows that any prefix of $\x$ is also a prefix of $\y$, and there is not a $j\leq k$ such that $p_j$ is prefix of $\y$ but not of $\x$. Thus, the first index $j$ such $p_j$ is prefix of $\y$ but no of $\x$ is greater than  $k$, which implies $\y\in B(\x,1/2^k)\subset  B_{d}(\x,\varepsilon)$ which leads to conclude that 
$Z(x_0...x_{n-1}\bar H,F)\subset  B_{d}(\x,\varepsilon)$.

\end{proof}

\subsection{Compactness and local-compactness criteria}\label{subsec:Blur_shift_compactfication}
  
In this subsection we shall present necessary and sufficient conditions under which a blur shift $\Sigma_\Lambda$ is (sequentially) compact or locally compact.\\

Firstly, we shall prove that sequential compactness and compactness are equivalent concepts in the context of blur shifts (in spite of them being metrizable or not).

Recall that the induced topology on some given $\mathfrak{X}\subset\Sigma_{\A^\N}$ has a basis $\mathfrak{B}_\mathfrak{X}$ composed by the sets $Z_\mathfrak{X}(\alpha):=Z(\alpha)\cap \mathfrak{X}$ and $Z_\mathfrak{X}(\alpha\bar H, F):=Z(\alpha\bar H,F)\cap \mathfrak{X}$ for $\alpha\h\in B(\mathfrak{X})$, and a finite set $F\subset \F_\mathfrak{X}(\alpha)\cap H$. Furthermore, $B_n(\mathfrak{X})$ stands for the set of all words with length $n$ in $\mathfrak{X}$, $B(\mathfrak{X})$ stands for the set of all finite words in $\mathfrak{X}$, and $\F_{\mathfrak{X}}(a_0\ldots a_{n-1})$ stands for the follower set of $a_0\ldots a_{n-1}\in B(\mathfrak{X})$ in $\mathfrak{X}$.

\begin{rmk}\label{rmk:finite_subcover} For all $\alpha\in B(\A^\N)$, $H\in \V$, and any family of finite sets $\{F_\ell\}_ {\ell\in\lambda}$, there exist $F_1,...,F_t\in \{F_\ell\}_ {\ell\in\lambda}$ such that $$\bigcup_{\ell\in\lambda}Z(\alpha\bar H,F_\ell)=\bigcup_{i=1}^tZ(\alpha\bar H,F_i).$$
Furthermore, taking $F:=\bigcap_{i=1}^tF_i$, we have  $$\bigcup_{\ell\in\lambda}Z(\alpha\bar H,F_\ell)=Z(\alpha\bar H,F).$$
\end{rmk}

In order to characterize compactness and local compactness of blur shift spaces we shall first  prove that sequential compactness and compactness are equivalent concepts in our topology. To achieve this we need the following auxiliary lemma.

\begin{lem}\label{lem:seq_compc<=>finite_follower} Let $\mathfrak{X}\subset\Sigma_{\A^\N}$ be any subset (not necessarily a blur shift space). If  $\mathfrak{X}$ is sequentially compact, then for all $\alpha\in B(\mathfrak{X})\cap B(\A^\N)$ the sets $V_\alpha:=\{H\in\V:\ \h\in\F_\mathfrak{X}(\alpha)\}$ and  $W_\alpha:=\F_\mathfrak{X}(\alpha)\setminus \left( \bigcup_{H\in V_\alpha}\bar H\right)$ are finite. Conversely, under the additional assumption that $\mathfrak{X}$ is closed, if for all $\alpha\in B(\mathfrak{X})\cap B(\A^\N)$ the sets $V_\alpha$ and  $W_\alpha$ are finite, then $\mathfrak{X}$ is sequentially compact.
\end{lem}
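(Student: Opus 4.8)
The plan is to prove the two directions of the equivalence separately, using the description of convergence of sequences in $\tau_{\Sigma_{\A^\N}}$ recalled just before Example \ref{ex:OTW_shift} and the fact (Proposition \ref{prop:Frechet-Uryson_space}) that blur shifts — and hence their subspaces — are sequential spaces, so that sequential compactness is the natural notion to work with.

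\textbf{Necessity.} Suppose $\mathfrak{X}$ is sequentially compact and fix $\alpha=\alpha_0\ldots\alpha_{k-1}\in B(\mathfrak{X})\cap B(\A^\N)$. If $V_\alpha$ were infinite, pick distinct $H^1,H^2,\ldots\in V_\alpha$; since $\h^n\in\F_\mathfrak{X}(\alpha)$ there is a point $\x^n\in\mathfrak{X}$ with $\x^n_0\ldots\x^n_{k-1}=\alpha$ and $x^n_k=\h^n$, i.e. $\x^n\in Z(\alpha\bar{H^n})$. I claim no subsequence of $(\x^n)$ converges in $\mathfrak{X}$: a limit $\bar\x$ would have to satisfy $\bar x_0\ldots\bar x_{k-1}=\alpha$, and then inspecting the $(k{+}1)$-st coordinate — using that the blurred sets pairwise intersect in finite sets, so $H^m\setminus F$ and $H^n\setminus F$ are disjoint once $F$ contains $H^m\cap H^n$ — one sees that for any candidate basic neighborhood of $\bar\x$ at worst one of the cylinders $Z(\alpha\bar{H^n})$ meets it, so only finitely many $\x^n$ lie in it; this contradicts sequential compactness. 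The argument for $W_\alpha$ is the same but easier: if $W_\alpha$ were infinite, choose distinct $a^n\in W_\alpha\subset\A$ and $\x^n\in Z_\mathfrak{X}(\alpha a^n)$; a convergent subsequence would force $\bar x_k$ to equal infinitely many distinct values of $\A$ (or to be some $\h$, but $a^n\notin\bar H$ for $H\in V_\alpha$ by definition of $W_\alpha$), which is impossible. So both $V_\alpha$ and $W_\alpha$ are finite.

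\textbf{Sufficiency.} Now assume $\mathfrak{X}$ is closed in $\Sigma_{\A^\N}$ and that $V_\alpha,W_\alpha$ are finite for every $\alpha\in B(\mathfrak{X})\cap B(\A^\N)$; I want to show every sequence $(\x^n)_{n\in\N}\subset\mathfrak{X}$ has a convergent subsequence, with limit automatically in $\mathfrak{X}$ by closedness. The plan is a diagonal / König's-lemma argument on coordinates. Since $\F_\mathfrak{X}(\epsilon)=B_1(\mathfrak{X})$ decomposes as $W_\epsilon\cup\bigcup_{H\in V_\epsilon}\bar H$ with $W_\epsilon$ and $V_\epsilon$ finite, one of finitely many alternatives happens for infinitely many $n$: either $x^n_0$ equals a fixed $a\in W_\epsilon$; or $x^n_0=\tilde G$ for a fixed $G\in V_\epsilon$; or $x^n_0\in G$ (but varies) for a fixed $G\in V_\epsilon$. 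In the first case pass to that subsequence and recurse with $\alpha=a$. In the second case the points are already of the form $(\ldots)$ with $x^n_0=\tilde G$, hence all equal (as classes) to $(\tilde G\tilde G\tilde G\ldots)\in\mathfrak{X}$ — a constant, convergent subsequence, done. In the third case, the $x^n_0$ are infinitely many distinct elements of $G\cap B_1(\mathfrak{X})$, and since any finite $F\subset G$ excludes only finitely many of them, $(\x^n)$ (along a further subsequence making the $x^n_0$ pairwise distinct) converges to $(\tilde G\tilde G\tilde G\ldots)$, which lies in $\mathfrak{X}$ because $\mathfrak{X}$ is closed and every $Z(\bar G,F)$ meets $\mathfrak{X}$; done. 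Iterating, either the recursion terminates after finitely many steps (yielding a point of $\partial\mathfrak{X}$ or a tail-blur as above) or it runs forever and produces $\bar\x\in\A^\N$ with $\bar x_0\ldots\bar x_{m-1}$ agreeing with a nested sequence of subsequences for every $m$; the usual diagonal subsequence then converges to $\bar\x$, and $\bar\x\in\mathfrak{X}$ by closedness. Since $\mathfrak{X}$ is a sequential space, sequential compactness gives compactness.

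\textbf{Main obstacle.} The delicate point is the bookkeeping in the sufficiency direction: at each coordinate one must organize the three alternatives (fixed $\A$-letter, fixed blur symbol, wandering inside a fixed blurred set) so that the "wandering" and "blur-symbol" cases immediately produce a convergent subsequence with limit in $\partial\mathfrak{X}$, while only the "fixed letter" case feeds the recursion — and one must invoke closedness of $\mathfrak{X}$ exactly at the moment the limit is a blurred sequence, to be sure the limit belongs to $\mathfrak{X}$ (the infinite-extension property is not needed here, only closedness, which is why the lemma is stated for arbitrary closed $\mathfrak{X}$). Keeping track of why the decomposition $\F_\mathfrak{X}(\alpha)=W_\alpha\cup\bigcup_{H\in V_\alpha}\bar H$ is exhaustive — i.e. that a letter of $\F_\mathfrak{X}(\alpha)$ not in $W_\alpha$ must lie in some $\bar H$ with $H\in V_\alpha$, which needs the hypothesis that $V_\alpha$ is the set of all relevant blur symbols — is the one spot where the pairwise-finite-intersection property of $\V$ is quietly used. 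Remark \ref{rmk:finite_subcover} is the tool that lets one collapse a family of neighborhoods $Z(\alpha\bar H,F_\ell)$ into a single one when checking which basic neighborhoods of the candidate limit can meet the tail of the sequence.
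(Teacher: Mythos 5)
Your overall strategy is the same as the paper's: for necessity, manufacture from an infinite $V_\alpha$ or $W_\alpha$ a sequence in $\mathfrak{X}$ with no subsequence converging in $\mathfrak{X}$; for sufficiency, extract subsequences coordinate by coordinate, pigeonholing via the finiteness of $W_\alpha$ and $V_\alpha$, and diagonalize. Your necessity half is sound (and in the $W_\alpha$ case arguably more careful than the paper's, since you explain why the candidate limit cannot be a blurred sequence of $\mathfrak{X}$). The issue is in the sufficiency half, in the case analysis performed at each coordinate.

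The trichotomy ``for infinitely many $n$, either $x^n_0$ equals a fixed $a\in W_\epsilon$, or $x^n_0=\tilde G$ for a fixed $G\in V_\epsilon$, or $x^n_0\in G$ but varies'' is not exhaustive: it omits the case where $x^n_0$ is constantly equal to a fixed letter $a\in G\setminus W_\epsilon$ for some $G\in V_\epsilon$. As written, such a subsequence falls into your third alternative, where you assert that the $x^n_0$ ``are infinitely many distinct elements of $G\cap B_1(\mathfrak{X})$'' and conclude convergence to $(\tilde G\tilde G\tilde G\ldots)$ --- both false when the coordinate is constant; that branch should instead feed the recursion. The repair is exactly the paper's ordering of the dichotomy: first ask whether \emph{some} value of the coordinate (letter or blur symbol, inside a blurred set or not) is attained infinitely often; if so, fix it and recurse or stop. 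Only when no value repeats infinitely often do you pass to a subsequence with pairwise distinct coordinates, discard the finitely many landing in $W_\beta$, and use the finiteness of $V_\beta$ to find infinitely many distinct coordinates inside a single $\bar H$, giving convergence to $(\beta\h\h\h\ldots)$, which belongs to $\mathfrak{X}$ already by the definition of $V_\beta$ (so closedness is needed only in the infinite-recursion branch, as you correctly anticipate). Finally, your closing sentence ``since $\mathfrak{X}$ is a sequential space, sequential compactness gives compactness'' is both beyond what the lemma claims and unjustified as stated; the paper establishes that equivalence separately, by a covering argument, in Theorem \ref{theo:compact_seq-compact}.
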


\begin{proof}

Suppose $\mathfrak{X}$ sequentially compact and let us show that for each $a_1...a_{n-1}\in B(\mathfrak{X})\cap B(\A^\N)$ there are at most finitely many  $\h\in\V$ such that $(a_1...a_{n-1}\h\h\h...) \in \mathfrak{X}$. Indeed, if we suppose, by contradiction, that for some $a_1...a_{n-1}\in B(\mathfrak{X})$ there exist an infinite set $\{\h^\ell\in\V:\ell\geq 0\}$ such that $(a_1...a_{n-1}\h^\ell\h^\ell\h^\ell...) \in \mathfrak{X}$ for all $\ell\geq 0$, then $(a_1...a_{n-1}\h^\ell\h^\ell\h^\ell...)_{\ell\geq 0}$ is a sequence in $\mathfrak{X}$ which has not any convergent subsequence, contradicting that $\mathfrak{X}$ is sequentially compact. Hence, given $\alpha\in B(\mathfrak{X})$, the set $V_\alpha:=\{H\in\V:\ 
\h\in \F_\mathfrak{X}(\alpha)\}$ is finite (possibly empty).

Now, given $\alpha=a_0...a_{n-1}\in B(\mathfrak{X})\cap B(\A^\N)$, define $W_\alpha:=\F_\mathfrak{X}(\alpha)\setminus  \left(\bigcup_{H\in V_\alpha}\bar H\right)$. Note that $W_\alpha$ just contains symbols of the alphabet $\A$. Suppose by contradiction  $W_\alpha$ is infinite, and for each $\ell\geq 0$ take $w^\ell\in W_\alpha\setminus\{w^j:\ j<\ell\}$. Hence, there exists a sequence $(\x^\ell)_{\ell\geq 0}\in\mathfrak{X}$ such that $x_0^\ell...x_{n-1}^\ell=\alpha$ and $x_n^\ell=\w^\ell$. Since for each distinct $\ell$, we have a distinct $x^\ell_n$ which does not belong to any blurred set, it follows that  $(\x^\ell)_{\ell\geq 0}$ does not contain any convergent subsequence, which contradicts the sequential compactness of $\mathfrak{X}$. Thus, $W_\alpha$ shall be finite.\\

Now suppose $\mathfrak{X}$ closed and that for all $\alpha\in B(\mathfrak{X})\cap B(\A^\N)$ the sets $V_\alpha$ and $W_\alpha$ are finite. Let $(\x^\ell)_{\ell\geq 0}\in \mathfrak{X}$ be any sequence. If $(\x^\ell)_{\ell\geq 0}$ has a subsequence $(\x^{\ell_{0_k}})_{k\geq 0}$ such that $x^{\ell_{0_j}}_0=x^{\ell_{0_k}}_0$ for all $j,k\geq 0$, then we take this subsequence. Now, if $(\x^{\ell_{0_k}})_{k\geq 0}$ has a subsequence $(\x^{\ell_{1_k}})_{k\geq 0}$ such that $x^{\ell_{1_j}}_1=x^{\ell_{1_k}}_1$ for all $j,k\geq 0$, then we take this subsequence. We proceed recursively, either infinitely or until not to be able of finding a subsequence $(\x^{\ell_{n_k}})_{k\geq 0}$ of $(\x^{\ell_{{n-1}_k}})_{k\geq 0}$ such that $x^{\ell_{n_j}}_n=x^{\ell_{n_k}}_n$ for all $j,k\geq 0$. If there always be such subsequences, then we will have an infinite family of subsequences \begin{equation}\label{eq:subseq}\big\{(\x^{\ell_{n_k}})_{k\geq 0}\big\}_{n\geq 0},\end{equation} and it follows that $(\x^{\ell_{n_1}})_{n\geq 0}$ is a subsequence of  $(\x^\ell)_{\ell\geq 0}$ which converges in $\mathfrak{X}$ (since  $\mathfrak{X}$ is closed).
 
On the other hand, suppose that it is not possible to construct an infinite family as \eqref{eq:subseq}, and let $n\geq 0$ be the first integer for which $(\x^{\ell_{{n-1}_k}})_{k\geq 0}$ is such that $x^{\ell_{{n-1}_j}}_n\neq x^{\ell_{{n-1}_k}}_n$ for all $j,k\geq N$ for some $N\in\N$ (recall that from its construction we have that $x^{\ell_{{n-1}_j}}_i= x^{\ell_{{n-1}_k}}_i=:b_i$ for all $j,k\geq 0$ and for all $0\leq i\leq n-1$). Denote $\beta:=b_1...b_{n-1}$. Since $W_\beta$ finite, then for all but finitely many indexes $j\geq 0$ we have  $x^{\ell_{{n-1}_j}}_n\notin W_\beta$. On the other hand, since $V_\beta$ is a finite family of sets, then infinitely many $x^{\ell_{{n-1}_j}}_n$ belong to the same $H\in V_\beta$, that is, there exists a subsequence 
$(\x^{\ell_{{n}_k}})_{k\geq 0}$ of $(\x^{\ell_{{n-1}_k}})_{k\geq 0}$ such that $x^{\ell_{{n}_k}}_n\in H\setminus \{x^{\ell_{{n}_0}}_n,...,x^{\ell_{{n}_{k-1}}}_n\}$ for all $k\geq 0$. Thus, 
$x^{\ell_{{n}_k}}_n\to\h$ as $k\to \infty$, which implies that $\x^{\ell_{{n}_k}}\to (\beta \h\h\h...)\in\mathfrak{X}$ (observe that we did not need to use here the hypothesis of $\mathfrak{X}$ being closed, since from the definition of $V_\beta$ we have that $(\beta \h\h\h...)\in\mathfrak{X}$).

\end{proof}

Now we can prove the equivalence between compactness and sequential compactness. We remark that as in the previous lemma, the following theorem does not require the set $\mathfrak{X}$ being a blur shift space.

\begin{theo}\label{theo:compact_seq-compact} 
A subset $\mathfrak{X}\subset \Sigma_{\A^\N}$ is compact if and only if it is sequentially compact.
\end{theo}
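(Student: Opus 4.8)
The plan is to prove the two implications separately, relying on Lemma \ref{lem:seq_compc<=>finite_follower} as the bridge. The direction ``compact $\Rightarrow$ sequentially compact'' is the one that requires real work, since blur shifts need not be metrizable in general, so one cannot simply invoke the classical equivalence. Instead, I would use the fact that blur shifts are sequential spaces (Proposition \ref{prop:Frechet-Uryson_space}): in a sequential space every compact subset is countably compact, but more is needed to get sequences. The cleaner route is to argue contrapositively using Lemma \ref{lem:seq_compc<=>finite_follower}. If $\mathfrak{X}$ is not sequentially compact, the lemma's first part (which only needs sequential compactness to fail, not to hold) tells us there is some $\alpha\in B(\mathfrak{X})\cap B(\A^\N)$ for which $V_\alpha$ or $W_\alpha$ is infinite; from such an $\alpha$ one directly exhibits an open cover with no finite subcover. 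If $W_\alpha$ is infinite, pick for each $w\in W_\alpha$ a point $\x^w\in\mathfrak{X}$ with $x^w_0\dots x^w_{n-1}=\alpha$, $x^w_n=w$; the cylinders $Z_\mathfrak{X}(\alpha_0\dots\alpha_{n-1}w)$ together with $Z_\mathfrak{X}(\beta)$ for all $\beta\in B_n(\mathfrak{X})$ with $\beta\neq\alpha w$ cover $\mathfrak{X}$ (using Remark \ref{rmk:finite_subcover} to handle the $\bar H$-cylinders), but each point $\x^w$ lies in exactly one of them, so no finite subcover exists. The case $V_\alpha$ infinite is analogous, using the points $(\alpha\h\h\h\dots)$ for $H\in V_\alpha$.

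For the converse, ``sequentially compact $\Rightarrow$ compact,'' I would again lean on Lemma \ref{lem:seq_compc<=>finite_follower}, but here I need the forward part only as a structural consequence: sequential compactness of $\mathfrak{X}$ gives that $V_\alpha$ and $W_\alpha$ are finite for all $\alpha$, and (being sequentially compact in a Hausdorff space, hence closed in $\Sigma_{\A^\N}$) $\mathfrak{X}$ is closed. Now take an arbitrary open cover $\mathcal{U}$ of $\mathfrak{X}$ by basic generalized cylinders. I would build a finite subcover by an inductive pruning on the ``depth'' of the cylinders, organized as a finitely-branching tree argument. Concretely: $B_1(\mathfrak{X})$ decomposes as the finite set $W_\epsilon$ together with $\bigcup_{H\in V_\epsilon}(\bar H\cap B_1(\mathfrak{X}))$ over the finite family $V_\epsilon$; each symbol $\h$ with $H\in V_\epsilon$ is covered by some $U\in\mathcal{U}$ which, being open and containing $(\h\h\h\dots)$, contains a whole cylinder $Z_\mathfrak{X}(\bar H,F_H)$ for some finite $F_H$. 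The complement $B_1(\mathfrak{X})\setminus\bigcup_{H\in V_\epsilon}(H\setminus F_H)$ is finite. So after removing finitely many cylinders we reduce to finitely many ``tails'' $Z_\mathfrak{X}(a)$, $a\in B_1(\mathfrak{X})$ in that finite complement, and recurse on each. The key point making this terminate is that an infinite nested sequence of such choices would produce, by sequential compactness, a limit point $\x$, which lies in some $U\in\mathcal{U}$; then $U$ contains a neighborhood of $\x$, contradicting that we kept descending — i.e. one formalizes a König's-lemma / compactness-of-the-tree step.

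The main obstacle I anticipate is making the inductive construction in the converse direction rigorous without metrizability: one must be careful that at each stage only finitely many branches survive, and that an infinite descending branch genuinely yields a convergent sequence in $\mathfrak{X}$ whose limit contradicts the descent. The cleanest formalization is probably to argue by contradiction: assume $\mathcal{U}$ has no finite subcover; then construct recursively a strictly descending chain of basic cylinders $Z_0\supset Z_1\supset\cdots$, each of which is not finitely covered by $\mathcal{U}$, choosing at each step a child (either $Z_\mathfrak{X}(\gamma a)$ or $Z_\mathfrak{X}(\gamma\bar H,F)$) that remains not finitely covered — possible precisely because $W_\gamma$ and $V_\gamma$ are finite, so a cylinder that is not finitely covered must have a child that is not finitely covered. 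Extract from the $Z_n$ a point $\x^n\in Z_n\cap\mathfrak{X}$ in each; sequential compactness yields a convergent subsequence with limit $\x\in\mathfrak{X}$, and by construction $\x\in Z_n$ for all $n$, so $\x$ has a neighborhood basis of cylinders all of which fail to be finitely covered; but $\x\in U$ for some $U\in\mathcal{U}$, and $U$ contains some $Z_n$ for $n$ large, a single set, contradicting that $Z_n$ is not finitely covered. This packages all the bookkeeping into one contradiction and avoids an explicit tree formalism.
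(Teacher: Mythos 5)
Your converse direction (sequentially compact $\Rightarrow$ compact) is correct and genuinely different from the paper's: the paper reduces an arbitrary cover to a \emph{disjoint} one by passing to maximal cylinders, merging the neighborhoods of each blur point via Remark \ref{rmk:finite_subcover}, and refining, whereas you run a K\"onig's-lemma descent on cylinders that are not finitely covered. Your descent does work, because at each stage $Z_\mathfrak{X}(\gamma)$ with $\gamma\in B(\A^\N)$ splits as $\bigcup_{a\in W_\gamma}Z_\mathfrak{X}(\gamma a)\cup\bigcup_{H\in V_\gamma}Z_\mathfrak{X}(\gamma\bar H)$ (finitely many pieces by Lemma \ref{lem:seq_compc<=>finite_follower}), and a piece $Z_\mathfrak{X}(\gamma\bar H)$ that is not finitely covered must, after peeling off the neighborhood $Z_\mathfrak{X}(\gamma\bar H,F)\subset U$ of the blur point, hand the failure down to one of the finitely many $Z_\mathfrak{X}(\gamma a)$, $a\in F$ --- you gesture at this with the $F_H$ construction and it is the one step you must spell out, since $Z_\mathfrak{X}(\gamma\bar H,F)$ itself has infinitely many children and cannot be kept as a node of the descent.

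The forward direction, however, has a genuine logical gap. You claim that ``not sequentially compact'' yields, via the first part of Lemma \ref{lem:seq_compc<=>finite_follower}, some $\alpha$ with $V_\alpha$ or $W_\alpha$ infinite. The first part of the lemma says sequential compactness \emph{implies} finiteness of all $V_\alpha,W_\alpha$; what you are using is its converse, which is false: take $\A=\N$, $\V=\{\N\}$ and $\mathfrak{X}=\{0,1\}^\N\setminus\{(000\ldots)\}$, which is not sequentially compact although every $V_\alpha$ is empty and every $W_\alpha\subset\{0,1\}$. The missing ingredient is closedness: the correct route is to show that compactness forces $\mathfrak{X}$ to be closed (compact subsets of the Hausdorff space $\Sigma_{\A^\N}$, Proposition \ref{prop:Hausdorff}, are closed) \emph{and} forces every $V_\alpha,W_\alpha$ to be finite (your cover argument), and only then invoke the \emph{second} part of Lemma \ref{lem:seq_compc<=>finite_follower}. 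Your cover construction also needs repair: the auxiliary cylinders $Z_\mathfrak{X}(\alpha_0\ldots\alpha_{k-1}\bar G)$ at levels $k<n$ all contain the common prefix $\alpha$, so a single one of them can absorb every witness $\x^w$ at once; you must puncture them as $Z_\mathfrak{X}(\alpha_0\ldots\alpha_{k-1}\bar G,\{\alpha_k\})$ so that each $\x^w$ survives only in its own cylinder $Z_\mathfrak{X}(\alpha w)$. (For comparison, the paper avoids all of this by proving the forward direction directly: it extracts coordinatewise-constant subsequences and, when that process halts at level $n$, applies compactness to a level-$n$ cover to trap a tail of the sequence in a single $Z_\mathfrak{X}(b_0\ldots b_{n-1}\bar H)$, forcing convergence to $(b_0\ldots b_{n-1}\h\h\h\ldots)$.)
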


\begin{proof} 

Suppose that $\mathfrak{X}$ is compact and let $(\x^\ell)_{\ell\geq 0}\in \mathfrak{X}$ be a sequence. As done in the second part of Lemma \ref{lem:seq_compc<=>finite_follower}, if $(\x^\ell)_{\ell\geq 0}$ has a subsequence $(\x^{\ell_{0_k}})_{k\geq 0}$ such that $x^{\ell_{0_j}}_0=x^{\ell_{0_k}}_0$ for all $j,k\geq 0$, we take this subsequence. We proceed recursively, either infinitely or until not to be able of finding a subsequence $(\x^{\ell_{n_k}})_{k\geq 0}$ of $(\x^{\ell_{{n-1}_k}})_{k\geq 0}$ such that $x^{\ell_{n_j}}_n=x^{\ell_{n_k}}_n$ for all $j,k\geq 0$. As before, if we can proceed infinitely, then we get an infinite family of subsequences \eqref{eq:subseq} and we can define a convergent subsequece. On the other hand, if we cannot proceed infinitely, and  $n\geq 0$ is the first integer for which $(\x^{\ell_{{n-1}_k}})_{k\geq 0}$ is such that $x^{\ell_{{n-1}_j}}_n\neq x^{\ell_{{n-1}_k}}_n$ for all $j,k\geq N$ for some $N\in\N$, we consider the open cover of $\mathfrak{X}$,
$$C_n:=\{Z_\mathfrak{X}(a_0...a_n),\ Z_\mathfrak{X}(b_0...b_{n-1}\bar H):\ a_0...a_n,\ b_0...b_{n-1}\in B(\mathfrak{X})\cap B(\A^\N),\text{ and } \h\in\F_\mathfrak{X}(b_0...b_{n-1})\}.$$ 
From the compactness of $\mathfrak{X}$ there is $C_n'\subset C_n$ a finite subcover of  $\mathfrak{X}$, and then there is a subsequence of $(\x^{\ell_{{n-1}_k}})_{k\geq 0}$ which lies in some cylinder $Z_\mathfrak{X}(b_0...b_{n-1}\bar H)\in C_n'$. 
Hence, since $x^{\ell_{{n-1}_k}}_n$ is different for each $k\geq N$, it implies that such subsequence converges to $(b_0...b_{n-1}\h\h\h...)\in \mathfrak{X}$.\\

Now suppose that $\mathfrak{X}$ is sequentially compact. Firstly, note that if $\mathcal{C}$ is a disjoint open cover, then $\mathcal{C}$ shall be finite. In fact, if by contradiction we suppose that $\mathcal{C}$ is infinite, then we can take a sequence $(\x^\ell)_{\ell\geq 0}$ such that each $\x^\ell$ belongs to a different set of $\mathcal{C}$. Therefore, since  $\mathfrak{X}$ is sequentially compact, there exists a subsequence $(\x^{\ell_k})_{k\geq 0}$ which converges to some point $\bar\x\in\mathfrak{X}$, which implies that the set $A\in\mathcal{C}$ which contains $\bar\x$ also contains infinitely many terms of $(\x^{\ell_k})_{k\geq 0}$, contradicting that $\mathcal{C}$ is a disjoint family and each $\x^\ell$ belongs to a distinct set of $\mathcal{C}$.

For the general case, let $\mathcal{C}$ be an open cover, and we can assume, without loss of generality, that its elements are generalized cylinders of $\mathfrak{B}_\mathfrak{X}$. Consider the subcover $\mathcal{C}'$ formed by all maximal sets of $\mathcal{C}$, that is, 
$$\mathcal{C}':=\{Z\in\mathcal{C}:\ \nexists Y\in\mathcal{C}\text{ s.t. } Z\subset Y\}.$$

Let us prove that $\mathcal{C}'$ is finite. To achieve this, we shall construct an open cover $\mathcal{\tilde C}$ which is finite if and only if $\mathcal{C}'$ is finite.\\

In what follows we shall denote as $\alpha$ a word in $\B(\mathfrak{X})\cap B(\A^\N)$. Hence, firstly note that a generalized cylinder of type $Z_\mathfrak{X}(\alpha)$ in $\mathcal{C}'$ is disjoint of any other cylinder in $\mathcal{C}'$. Thus, we have that a generalized cylinder of type $Z_\mathfrak{X}(\alpha\bar H, F_H)$ of $\mathcal{C}'$ can only intersect a generalized cylinder $Z_\mathfrak{X}(\alpha\bar G, F_G)$  of $\mathcal{C}'$. 

 Define a cover $\mathcal{C}''$ from $\mathcal{C}'$ as follows: For each $(\alpha\h\h\h...)\in\mathfrak{X}$, replace all of its basic neighborhoods in $\mathcal{C}'$,  $\{Z_\mathfrak{X}(\alpha\bar H,F_{\alpha H}^\ell)\}_{\ell\in\lambda(\alpha\h)}=\mathfrak{B}_{(\alpha\h\h\h...)}\cap\mathcal{C}'$,  by a single generalized cylinder $Z_\mathfrak{X}(\alpha\bar H, F_{\alpha H})$ such that
$$Z_\mathfrak{X}(\alpha\bar H, F_{\alpha H})=\bigcup_{\ell\in\lambda(\alpha\h)} Z_\mathfrak{X}(\alpha\bar H,F_{\alpha H}^\ell)\qquad\text{(See Remark \ref{rmk:finite_subcover})}.$$

From Lemma \ref{lem:seq_compc<=>finite_follower}, sequential compactness implies that for each $\alpha\in B(\mathfrak{X})\cap B(\A^\N)$ the set $V_\alpha:=\{H\in\V:\ \h\in\F_\mathfrak{X}(\alpha)\}$ is finite. It implies that for each $\alpha$, the family $V_{\alpha,\mathcal{C}''}:=\{H\in\V:\ Z_\mathfrak{X}(\alpha\bar H,F_{\alpha H})\in\mathcal{C}''\}\subset V_\alpha$ is finite. Denote $V_{\alpha,\mathcal{C}''}=\left\{H^1,...,H^{k(\alpha)}\right\}$, and denote
$N_{\alpha,\mathcal{C}''}:=\left\{Z_\mathfrak{X}\left(\alpha\bar H^1,F_{\alpha H^1}\right),...,  
Z_\mathfrak{X}\left(\alpha\bar H^{k(\alpha)},F_{\alpha H^{k(\alpha)}}\right)\right\}$\sloppy, which is the family of all generalized cylinders of the type  \eqref{eq:Type 2 GC} in $\mathcal{C}''$ whose definition uses $\alpha$.

Note that if $\mathcal{C}'$ is finite, then $\mathcal{C}''$ is finite. On the other hand, since from Remark \ref{rmk:finite_subcover} each generalized cylinder  $Z_\mathfrak{X}(\alpha\bar H^i,F_{\alpha H^i})$ in $\mathcal{C}''$ is a finite union of generalized cylinders in  $\{Z_\mathfrak{X}(\alpha\bar H^i,F_{\alpha H^i}^\ell)\}_{\ell\in\lambda(\alpha\h^i)}\subset \mathcal{C}'$, it follows that if $\mathcal{C}''$ is finite, then $\mathcal{C}'$ is finite.

Now, define the cover $\mathcal{\tilde C}$ from $\mathcal{C}''$ as follows: For each $\alpha=\alpha_0...\alpha_{n-1}\in B(\mathfrak{X})\cap B(\A^\N)$,
 such that $ N_{\alpha,\mathcal{C}''}$ is nonempty, define $\tilde N_{\alpha,\mathcal{C}''}$ as the smallest family of disjoint generalized cylinders that refines $ N_{\alpha,\mathcal{C}''}$. Since $N_{\alpha,\mathcal{C}''}$ is finite and for any $\x\in Z_\mathfrak{X}(\alpha\bar H^i,F_{\alpha H^i})\cap Z_\mathfrak{X}(\alpha\bar H^j,F_{\alpha H^j})$ there are only a finite number of possible symbols for $x_n\in H^i\cap H^j$, it follows that $\tilde N_{\alpha,\mathcal{C}''}$ is also finite and can be written as
$$\tilde N_{\alpha,\mathcal{C}''}=\left\{Z_\mathfrak{X}\left(\alpha\bar H^1,G_{\alpha H^1}\right),...,  
Z_\mathfrak{X}\left(\alpha\bar H^{k(\alpha)},G_{\alpha H^{k(\alpha)}}\right),Z_\mathfrak{X}\left(\alpha g^1\right),...,Z_\mathfrak{X}\left(\alpha g^{l(\alpha)}\right)\right\}.$$
Now, $\mathcal{\tilde C}$ is obtained by replacing in $\mathcal{C}''$ the sets of each family $N_{\alpha,\mathcal{C}''}$ by the sets of the family $\tilde N_{\alpha,\mathcal{C}''}$.\\

Observe that,  $\mathcal{\tilde C}$ is also a cover of $\mathfrak{X}$, and $\mathcal{\tilde C}$  is finite if and only if $\mathcal{C}''$ is finite. 
Hence, we conclude by noting that, since the unique nonempty intersections in $\mathcal{C}''$ could occur between sets of a same family  $N_{\alpha,\mathcal{C}''}$, it follows that $\mathcal{\tilde C}$ is a disjoint cover, and so it is a finite cover.

\end{proof}
 
Now, we are able to characterize compactness and local compactness in blur shifts.

\begin{theo}\label{theo:compactness} A blur shift  $\Sigma_\Lambda$ is compact if and only if  $\V_\Lambda$ is a finite family of sets which covers all except a finite number of elements  of $B_1(\Lambda)$.
\end{theo}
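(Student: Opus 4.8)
The plan is to prove the two implications separately, using Theorem~\ref{theo:compact_seq-compact} to replace compactness with sequential compactness, and Lemma~\ref{lem:seq_compc<=>finite_follower} with $\mathfrak{X}=\Sigma_\Lambda$ to translate sequential compactness into the finiteness of the sets $V_\alpha$ and $W_\alpha$ for every $\alpha\in B(\Lambda)$. Since $\Sigma_\Lambda$ is always closed in itself, Lemma~\ref{lem:seq_compc<=>finite_follower} gives: $\Sigma_\Lambda$ is compact if and only if $V_\alpha$ and $W_\alpha$ are finite for all $\alpha\in B(\Lambda)$. It therefore suffices to show that this condition on follower sets is equivalent to ``$\V_\Lambda$ is finite and covers all but finitely many elements of $B_1(\Lambda)$''.

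First I would prove the ``only if'' direction. Apply the follower-set condition with $\alpha=\epsilon$, the empty word, recalling $\F_{\Sigma_\Lambda}(\epsilon)=B_1(\Sigma_\Lambda)=B_1(\Lambda)\cup\tilde\V_\Lambda$. By the infinite-extension property (Definition~\ref{defn:blur_shift}.ii), $\tilde H\in\F_{\Sigma_\Lambda}(\epsilon)$ precisely when $|\F_\Lambda(\epsilon)\cap H|=|B_1(\Lambda)\cap H|=\infty$, i.e.\ exactly when $H\in\V_\Lambda$; hence $V_\epsilon=\V_\Lambda$, so finiteness of $V_\epsilon$ says $\V_\Lambda$ is finite. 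Moreover $W_\epsilon=B_1(\Lambda)\setminus\bigcup_{H\in\V_\Lambda}\bar H=B_1(\Lambda)\setminus\bigcup_{H\in\V_\Lambda}H$ (since the new symbols $\h$ are not in $\A$), and its finiteness says exactly that $\V_\Lambda$ covers all but finitely many elements of $B_1(\Lambda)$.

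For the ``if'' direction I would show that, assuming $\V_\Lambda=\{H^1,\dots,H^m\}$ is finite and $B_1(\Lambda)\setminus\bigcup_{i=1}^m H^i$ is finite, the sets $V_\alpha$ and $W_\alpha$ are finite for every $\alpha\in B(\Lambda)$. Indeed $V_\alpha\subseteq V_\epsilon=\V_\Lambda$ directly from the infinite-extension property: if $\tilde H\in\F_{\Sigma_\Lambda}(\alpha)$ then $|\F_\Lambda(\alpha)\cap H|=\infty$, which forces $|B_1(\Lambda)\cap H|=\infty$, i.e.\ $H\in\V_\Lambda$; so $V_\alpha$ is finite. For $W_\alpha=\F_{\Sigma_\Lambda}(\alpha)\setminus\bigl(\bigcup_{H\in V_\alpha}\bar H\bigr)$, note $\F_{\Sigma_\Lambda}(\alpha)\cap\A=\F_\Lambda(\alpha)\subseteq B_1(\Lambda)$, so $W_\alpha\subseteq B_1(\Lambda)\setminus\bigcup_{H\in V_\alpha}H$. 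Split this into the part in $B_1(\Lambda)\setminus\bigcup_{i=1}^m H^i$, which is finite by hypothesis, and the part inside $\bigcup_{i=1}^m H^i\setminus\bigcup_{H\in V_\alpha}H=\bigcup_{H^i\notin V_\alpha}\bigl(H^i\setminus\bigcup_{H\in V_\alpha}H\bigr)$. For each $H^i\notin V_\alpha$ we have $|\F_\Lambda(\alpha)\cap H^i|<\infty$ by definition of $V_\alpha$, so $\F_\Lambda(\alpha)$ meets $H^i$ in finitely many symbols; since there are only finitely many such $H^i$, the intersection of $W_\alpha$ with $\bigcup_{i=1}^m H^i$ is finite too. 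Hence $W_\alpha$ is finite, and Lemma~\ref{lem:seq_compc<=>finite_follower} together with Theorem~\ref{theo:compact_seq-compact} gives compactness.

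The only mildly delicate point — and the one I would be most careful about — is the bookkeeping around the difference between $\bar H$ and $H$ and between $\F_\Lambda(\alpha)$ and $\F_{\Sigma_\Lambda}(\alpha)$: one must keep track that the symbols $\tilde H$ contribute to $V_\alpha$, not to $W_\alpha$, and that removing $\bigcup_{H\in V_\alpha}\bar H$ rather than $\bigcup_{H\in\V_\Lambda}\bar H$ is what makes the $H^i\notin V_\alpha$ terms reappear; it is exactly there that the hypothesis $|\F_\Lambda(\alpha)\cap H^i|<\infty$ for $H^i\notin V_\alpha$ (which is the contrapositive of the infinite-extension property) does the work. Everything else is a direct application of the two previously proved results.
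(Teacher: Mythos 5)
Your proposal is correct and follows essentially the same route as the paper: both directions reduce compactness to sequential compactness via Theorem~\ref{theo:compact_seq-compact} and then to finiteness of $V_\alpha$ and $W_\alpha$ via Lemma~\ref{lem:seq_compc<=>finite_follower}, with the ``only if'' direction read off from $\alpha=\epsilon$. Your ``if'' direction simply spells out in detail the paper's one-line remark that every follower set is a finite union of blurred sets plus a finite set, and your bookkeeping of $\bar H$ versus $H$ and of $V_\alpha\subseteq\V_\Lambda$ is accurate.
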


\begin{proof}
Suppose $\Sigma_\Lambda$ compact, then from Lemma \ref{lem:seq_compc<=>finite_follower} and Theorem \ref{theo:compact_seq-compact}, it follows that for all $\alpha\in B(\Lambda)$ the sets $V_\alpha:=\{H\in\V_\Lambda:\ \h\in\F_{\Sigma_\Lambda}(\alpha)\}$ and  $W_\alpha:=\F_{\Sigma_\Lambda}(\alpha)\setminus \left( \bigcup_{H\in V_\alpha}\bar H\right)=\F_\Lambda(\alpha)\setminus \left( \bigcup_{H\in V_\alpha} H\right)$ are finite. Taking $\alpha=\epsilon$, the empty word, it follows
$\F_\Lambda(\epsilon)=B_1(\Lambda)$, $V_\epsilon=\V_\Lambda$ and $W_\epsilon=B_1(\Lambda)\setminus \left( \bigcup_{H\in \V_\Lambda} H\right)$, which implies that $\V_\Lambda$ is a finite family which covers all the set of $B_1(\Lambda)$ but the finite set of symbols $W_\epsilon$.\\

On the other hand, if  $\V_\Lambda$ is a finite family of sets which covers all except a finite number of elements  of $B_1(\Lambda)$, then any follower set in $\Sigma_\Lambda$ can be written as the union of a finite number of blurred sets with a finite set of symbols. Hence, since $\Sigma_\Lambda$ is closed, from Lemma \ref{lem:seq_compc<=>finite_follower} we conclude that it is sequentially compact and so compact (Theorem \ref{theo:compact_seq-compact}).

\end{proof}

\begin{prop}\label{prop:compact_first-count} A blur shift $\Sigma_\Lambda$ over an uncountable alphabet cannot be simultaneously compact and first countable.
\end{prop}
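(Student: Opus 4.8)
The plan is to reach a contradiction by combining the two structural criteria already established: the compactness characterization of Theorem~\ref{theo:compactness} and the first-countability characterization of Proposition~\ref{prop:first_countable}. I would suppose that $\Sigma_\Lambda$ is simultaneously compact and first countable and show that this forces $B_1(\Lambda)$, the set of alphabet symbols actually appearing in $\Sigma_\Lambda$, to be countable, contradicting the hypothesis that we work over an uncountable alphabet.

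First I would apply Theorem~\ref{theo:compactness}: compactness of $\Sigma_\Lambda$ says that $\V_\Lambda$ is a finite family, write $\V_\Lambda=\{H_1,\dots,H_m\}$, and that the set $W:=B_1(\Lambda)\setminus\bigcup_{i=1}^m H_i$ is finite. Then I would apply Proposition~\ref{prop:first_countable}: first countability says that $H\cap B_1(\Lambda)$ is countable for every $H\in\V_\Lambda$, so in particular each $H_i\cap B_1(\Lambda)$ is countable. Putting these together,
$$B_1(\Lambda)=W\cup\bigcup_{i=1}^m\big(H_i\cap B_1(\Lambda)\big)$$
exhibits $B_1(\Lambda)$ as a finite union of countable sets (and $W$ is even finite), hence $B_1(\Lambda)$ is countable — the contradiction sought.

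There is no real obstacle once these two results are available; the argument is essentially a one-line set-theoretic computation. The only point worth a remark is the phrase ``over an uncountable alphabet'': since symbols of $\A$ not used by $\Lambda$ have no effect on $\Sigma_\Lambda$, what the argument actually needs is that $B_1(\Lambda)$ is uncountable (equivalently, $\Lambda$ itself uses uncountably many symbols), and I would either state the hypothesis in that form or observe that $B_1(\Lambda)=\A$ is being tacitly assumed here.
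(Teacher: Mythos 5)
Your proposal is correct and follows essentially the same route as the paper, which likewise combines Theorem~\ref{theo:compactness} and Proposition~\ref{prop:first_countable} to conclude that $B_1(\Lambda)$ would be a finite union of countable sets and hence countable. Your remark about reading the hypothesis as ``$B_1(\Lambda)$ uncountable'' matches the paper's own phrasing of its proof, which also begins from that assumption.
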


\begin{proof}

Just note that if $B_1(\Lambda)$ is uncountable, it is not possible that all three following statement hold: $\V_\Lambda$ is finite;
$\V_\Lambda$ covers all but finite number of symbols of $B_1(\Lambda)$; For all $H\in\V_\Lambda$ we have $H\cap B_1(\Lambda)$ countable.
Hence, from Proposition \ref{prop:first_countable} and Theorem \ref{theo:compactness} we get that either $\Sigma_\Lambda$ is compact or it is first countable.

\end{proof}

As a direct consequence of the previous proposition, we have that:\\

\begin{cor}\label{cor:compact_matrizable} A blur shift $\Sigma_\Lambda$ over an uncountable alphabet cannot be simultaneously compact and metrizable.
\end{cor}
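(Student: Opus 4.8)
The plan is to argue by contradiction, using nothing more than the elementary fact that metrizability implies first countability, together with Proposition~\ref{prop:compact_first-count}.

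First I would suppose, toward a contradiction, that $\Sigma_\Lambda$ is a blur shift over an uncountable alphabet which is simultaneously compact and metrizable. Since $\Sigma_\Lambda$ is metrizable, I fix a metric $\rho$ compatible with $\tau_{\Sigma_\Lambda}$; then for every point $\x\in\Sigma_\Lambda$ the family of open balls $\{B_\rho(\x,1/n):\ n\ge 1\}$ is a countable local basis at $\x$, so $\Sigma_\Lambda$ is first countable. But $B_1(\Lambda)$ is uncountable by hypothesis, so Proposition~\ref{prop:compact_first-count} asserts that $\Sigma_\Lambda$ cannot be at once compact and first countable, a contradiction. Hence no such blur shift exists, which is the claim.

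I do not expect any genuine obstacle here: the only ingredient beyond Proposition~\ref{prop:compact_first-count} is the standard implication ``metrizable $\Rightarrow$ first countable'', which uses nothing specific to blur shifts. One may equally well phrase the argument directly rather than by contradiction: by Proposition~\ref{prop:first_countable} combined with Theorem~\ref{theo:compactness}, a compact blur shift over an uncountable alphabet fails to be first countable (one cannot simultaneously have $\V_\Lambda$ finite, $\V_\Lambda$ covering all but finitely many symbols of the uncountable set $B_1(\Lambda)$, and $H\cap B_1(\Lambda)$ countable for every $H\in\V_\Lambda$), and hence it fails, a fortiori, to be metrizable.
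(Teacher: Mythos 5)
Your argument is correct and is exactly the one the paper intends: the corollary is stated as a direct consequence of Proposition~\ref{prop:compact_first-count}, with the only extra ingredient being the standard implication that metrizability implies first countability. No issues.
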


\qed

Next, we characterize locally-compact blur shift spaces.

\begin{theo}\label{theo:local_compactness} A blur shift $\Sigma_\Lambda$ is locally compact if and only if for all $w\in B_1(\Lambda)$ there exists a family of words $\{\vv_\ell\}_{\ell\in\lambda}\subset B(\Lambda)$ such that $\bigcup_{\ell\in\lambda}Z_\Lambda(w\vv_\ell)\cap\Lambda=Z_\Lambda(w)\cap\Lambda$, and for all $\ell\in\lambda$  and $\uu\in B(\Lambda)$ such that $w\vv_\ell\uu\in B(\Lambda)$ there is a finite number of sets in $\V_\Lambda$  that cover all except a finite number of elements of $\F_\Lambda(w\vv_\ell\uu)$.
\end{theo}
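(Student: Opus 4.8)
The plan is to prove both implications by reducing local compactness to the existence, around each point, of a compact generalized cylinder, and then invoking Theorem~\ref{theo:compactness} to decide which generalized cylinders are compact. First I would record the following reformulation: a Hausdorff space is locally compact if and only if every point has a compact neighborhood, and since the generalized cylinders $Z_\Lambda(\alpha)$ and $Z_\Lambda(\alpha\bar H,F)$ form a clopen basis, it suffices to decide, for each $\x\in\Sigma_\Lambda$, whether some basic neighborhood of $\x$ is compact. Now a basic generalized cylinder $Z_\Lambda(\alpha)$ is itself a blur shift space over the alphabet $\F_\Lambda(\alpha)\cup(\text{tails})$ in an obvious sense; more precisely, one checks that $Z_\Lambda(\alpha)$ is closed and $\s^{|\alpha|}$ maps it onto a blur shift, so by Theorem~\ref{theo:compactness} (applied to the induced structure) $Z_\Lambda(\alpha)$ is compact iff for every $\uu$ with $\alpha\uu\in B(\Lambda)$ the family $\V_\Lambda$ covers all but finitely many elements of $\F_\Lambda(\alpha\uu)$ using only finitely many of its sets. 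The same criterion, localized one coordinate further, applies to $Z_\Lambda(\alpha\bar H,F)$: such a set is compact iff it is so ``below'' $\bar H\setminus F$, which again is governed by the finite-cover condition on follower sets of extensions.

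With this translation in hand, I would argue the forward direction as follows. Assume $\Sigma_\Lambda$ is locally compact and fix $w\in B_1(\Lambda)$. Consider the point(s) of $Z_\Lambda(w)$: around each $\x\in Z_\Lambda(w)$ there is a compact basic neighborhood $K_\x\subset Z_\Lambda(w)$, and $K_\x$ has the form $Z_\Lambda(w\vv_\x)$ or $Z_\Lambda(w\vv_\x\bar H,F_\x)$ for some word $w\vv_\x\in B(\Lambda)$. Since each of these is in particular contained in a set of the form $Z_\Lambda(w\vv_\x)$ with $w\vv_\x\in B(\Lambda)$ and, conversely, $Z_\Lambda(w\vv_\x)$ is compact whenever one of its ``sub-cylinder'' neighborhoods is and it is clopen, by shrinking I may take $K_\x=Z_\Lambda(w\vv_\x)$. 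Letting $\{\vv_\ell\}_{\ell\in\lambda}$ enumerate the words $\vv_\x$ so obtained, the family $\{Z_\Lambda(w\vv_\ell)\}$ covers $Z_\Lambda(w)$, hence covers $Z_\Lambda(w)\cap\Lambda$; and compactness of $Z_\Lambda(w\vv_\ell)$, via the criterion above (Theorem~\ref{theo:compactness}), says exactly that for every $\uu$ with $w\vv_\ell\uu\in B(\Lambda)$ finitely many sets of $\V_\Lambda$ cover all but finitely many elements of $\F_\Lambda(w\vv_\ell\uu)$. This is the stated condition.

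For the converse, assume the combinatorial condition and let $\x\in\Sigma_\Lambda$ with $x_0=w\in B_1(\Lambda)$. By hypothesis $\x\in Z_\Lambda(w\vv_\ell)$ for some $\ell$ (since the $Z_\Lambda(w\vv_\ell)$ cover $Z_\Lambda(w)\cap\Lambda$, and for boundary points $\x\in\partial\Lambda$ one uses that $\x$ is a limit of points of $\Lambda$ lying eventually in a fixed $Z_\Lambda(w\vv_\ell)$, using the clopenness of these cylinders; this limit argument is where some care is needed). Then $Z_\Lambda(w\vv_\ell)$ is a clopen neighborhood of $\x$, and the hypothesis on extensions $\uu$ of $w\vv_\ell$ is precisely the hypothesis of Theorem~\ref{theo:compactness} for the blur shift structure carried by $Z_\Lambda(w\vv_\ell)$: every follower set in it is a finite union of blurred sets with a finite set, so it is closed and sequentially compact, hence compact by Theorem~\ref{theo:compact_seq-compact}. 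Thus $\x$ has a compact neighborhood, and $\Sigma_\Lambda$ is locally compact.

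The main obstacle I anticipate is making rigorous the claim that a basic generalized cylinder $Z_\Lambda(w\vv)$ (and its ``$\bar H,F$'' variants) carries the structure of a blur shift space so that Theorem~\ref{theo:compactness} applies verbatim: one must verify that the induced collection of follower sets inside $Z_\Lambda(w\vv)$ is exactly $\{\F_\Lambda(w\vv\uu)\}_{\uu}$ up to relabeling, that the induced blurred-set family is $\{H\cap\F_\Lambda(w\vv)\cdots\}$ with the required infinite-intersection-free property, and that closedness is preserved — only then does ``$Z_\Lambda(w\vv)$ compact $\iff$ finitely many sets of $\V_\Lambda$ cover each follower set of an extension, modulo finitely many symbols'' follow. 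A secondary subtlety is the reduction, in the forward direction, from an arbitrary compact basic neighborhood of the form $Z_\Lambda(w\vv\bar H,F)$ to one of the form $Z_\Lambda(w\vv')$; this is handled by noting $Z_\Lambda(w\vv\bar H,F)$ is clopen and its compactness propagates to any clopen cylinder containing it only after possibly enlarging $\vv$, so one should instead observe directly that compactness of $Z_\Lambda(w\vv\bar H,F)$ already yields the finite-cover condition for all extensions $w\vv\uu$ with $\uu_0\in H\setminus F$, which combined with a separate neighborhood for the finitely many excluded symbols in $F$ still produces a family $\{\vv_\ell\}$ as required.
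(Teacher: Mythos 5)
Your overall strategy coincides with the paper's: reduce local compactness to compactness of basic clopen cylinders and decide the latter by the finite-cover condition on follower sets. There are, however, two concrete problems. First, the pivot of your argument --- that a cylinder $Z_\Lambda(\alpha)$ ``carries the structure of a blur shift space'' so that Theorem \ref{theo:compactness} applies --- is not available: blur shift spaces are by definition $\s$-invariant, and neither $Z_\Lambda(\alpha)$ nor its image under $\s^{|\alpha|}$ (the set of tails following $\alpha$) is shift-invariant in general, so Theorem \ref{theo:compactness} does not apply even ``up to relabeling''. You correctly flag this as the main obstacle but do not resolve it. The paper avoids it entirely: Lemma \ref{lem:seq_compc<=>finite_follower} and Theorem \ref{theo:compact_seq-compact} are deliberately stated for an \emph{arbitrary} subset $\mathfrak{X}\subset\Sigma_{\A^\N}$, and the proof applies them directly to the closed set $\mathfrak{X}=Z_\Lambda(w\vv)$, whose words in $B(\mathfrak{X})\cap B(\A^\N)$ are exactly the prefixes of $w\vv$ together with the words $w\vv\uu$; this yields precisely the condition in the statement. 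Replacing your appeal to Theorem \ref{theo:compactness} by these two results makes the forward direction and the $\Lambda$-part of the converse go through essentially as in the paper. Note also that your ``secondary subtlety'' (compact neighborhoods of the form $Z_\Lambda(w\vv\bar H,F)$) disappears once one restricts, as the paper does, to points $\x\in Z_\Lambda(w)\cap\Lambda$, whose basic neighborhoods are all of the first type $Z_\Lambda(x_0\ldots x_k)$.

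Second, in the converse your parenthetical treatment of boundary points does not work. A point $\x=(x_0\ldots x_{n-1}\h\h\h\ldots)\in\partial\Lambda$ belongs to $Z_\Lambda(w\vv_\ell)$ only if $w\vv_\ell$ is a prefix of $x_0\ldots x_{n-1}$, and the covering hypothesis, which constrains only $Z_\Lambda(w)\cap\Lambda$, does not force any $\vv_\ell$ to be that short: the points of $\Lambda$ converging to $\x$ may be distributed over infinitely many distinct cylinders $Z_\Lambda(w\vv_\ell)$, none of which contains $\x$. For instance, if $\F_\Lambda(x_0\ldots x_{n-1})=H$ while each $\F_\Lambda(x_0\ldots x_{n-1}h)$, $h\in H$, is infinite and disjoint from every blurred set, one can choose all $\vv_\ell$ of length at least $n+1$ so that the stated hypothesis holds, yet no neighborhood $Z_\Lambda(x_0\ldots x_{n-1}\bar H,F)$ is sequentially compact. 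So the ``limit argument'' you flag as needing care cannot be extracted from the hypothesis alone; the paper's own proof of the converse verifies compact neighborhoods only for points of $\Lambda$ and does not attempt the boundary case. If you wish to cover $\partial\Lambda$ you need a genuinely different argument (or a strengthened hypothesis controlling the follower sets $\F_\Lambda(x_0\ldots x_{n-1}h)$ for $h$ in a cofinite subset of $H$), not the one you sketch.
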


\begin{proof} \phantom\\
\begin{description}
\item[$(\Longrightarrow)$] Suppose that $\Sigma_\Lambda$ is locally compact. Given $w_0\in B_1(\Lambda)$, define $\lambda:=Z_\Lambda(w_0)\cap\Lambda$ and for each $\x\in \lambda$ let $\vv_\x=(x_1... x_{k(\x)})\in B(\Lambda)$ such that $\mathfrak{X}_\x:=Z_\Lambda(w_0\vv_{\x})=Z_\Lambda(w_0x_1...x_{k(\x)})$ is compact. From Lemma \ref{lem:seq_compc<=>finite_follower} and Theorem \ref{theo:compact_seq-compact} it follows that for any $\alpha=w_0\vv_\x\uu\in B(\mathfrak{X}_\x)$ we have a finite number of blurred sets that cover all except a finite number of letters of $\F_{\mathfrak{X}_\x}(w_0\vv_\x\uu)\supset \F_\Lambda(w_0\vv_\x\uu)$. Hence, the family $\{\vv_\x\}_{\x\in\lambda}$ satisfies the desired property.

\item[$(\Longleftarrow)$] Given $\x=(x_i)_{i\in\N}\in\Lambda$, consider $w=x_0$ and let $\{\vv_\ell\}_{\ell\in\lambda}\subset B(\Lambda)$ such that $\bigcup_{\ell\in\lambda}Z_\Lambda(x_0\vv_\ell)\cap\Lambda=Z_\Lambda(x_0)\cap\Lambda$ be the family that verifies the property given in the statement of the theorem. Then, there is $\vv_t$ such that $\x\in Z_\Lambda(x_0\vv_t)$, which means, $\vv_t=x_1...x_{N(t)}$ for some $N(t)\geq 0$. By hypothesis, there are a finite number of sets in $\V_\Lambda$  that cover all except a finite number of elements of $\F_\Lambda(w\vv_t\uu)=\F_\Lambda(x_0...x_{N(t)}\uu)$.

Denote $\mathfrak{X}:=Z_\Lambda(x_0...x_{N(t)})$. It follows that $\mathfrak{X}$ is closed and any $\alpha\in B(\mathfrak{X})\cap B(\A^\N)$ is written as either $\alpha=x_0...x_k$ with $k < N(t)$ or as $\alpha=x_0...x_{N(t)}\uu$ for some $\uu\in B(\Lambda)$. In the former case $\F_\mathfrak{X}(x_0...x_k)=\{x_{k+1}\}$, while in the later case the sets $V_\alpha$ and  $W_\alpha$ given in Lemma \ref{lem:seq_compc<=>finite_follower} are also finite. In both cases, from Lemma \ref{lem:seq_compc<=>finite_follower} and Theorem \ref{theo:compact_seq-compact} we conclude that $\mathfrak{X}$ is compact.
\end{description}

\end{proof}

\begin{ex} Let $\A:=\mathbb{R}^+=[0,\infty)$ and as in Example \ref{ex:blurshift_uncount_no_intersc}, for each $\lambda\in[0,1)$ define the set $H_\lambda:=\{x\in\mathbb{R}^+:\ x:=\lambda+k,\ k\in\N\}$, and then define $\V:=\{H_\lambda\}_{\lambda\in[0,1)}$. 

Let $f:\mathbb{R}^+\to [0,1)$ be the function given by 
$$f(\lambda):=\left
\{\frac{\lambda-\lfloor\lambda\rfloor}{k}+k:\ 0<k\leq \lceil\lambda\rceil^\star\right\},$$
where $\lfloor\lambda\rfloor$ denotes the largest integer less than or equal to $\lambda$, and 
$\lceil\lambda\rceil^\star$ denotes the smallest integer strictly greater than $\lambda$.

Hence, the blur shift $\Sigma_\Lambda\subset\Sigma_{\A^\N}^\V$, where $\Lambda$ is defined as follows: 
$$\x\in\Lambda\quad \Longleftrightarrow\quad \forall i\in\N,\ x_{i+1}\in H_{\ell_i}\text{ for some }\ell_i\in f(x_i),$$
is a locally compact blur shift which is not compact.
\end{ex}

Recall that for any blur shift $\Sigma_\Lambda^\V$ there exist directed labeled graphs $\G$ and $\bar \G$ such that $\G\subset \bar\G$, $\Lambda=\Lambda_\G$ and  $\Sigma_\Lambda^\V=\Lambda_{\bar \G}$ (Theorem \ref{theo:blurgraph}). 
 Thus, the local compactness condition given in Theorem \ref{theo:local_compactness}, translated for the graph language, means that, independently of the initial vertex,  any walk on $\G$ will be eventually enclosed in a subgraph where each vertex has all but a finite number of outgoing labels covered by some finite number of blurred sets. We notice that this condition is more general than the RFUM condition in \cite[Proposition 3.12]{GRISU} for  Gonçalves-Royer ultragraph shifts. Indeed, RFUM condition was a sufficient but not necessary condition for local compactness, since it supposes that the whole graph has the property that almost all outgoing edges of each vertex are covered by a finite number of blurred sets. In our general context, RFUM condition corresponds to the condition given in Corollary \ref{cor:local_compactness} below.

\begin{cor}\label{cor:local_compactness} If for any nonempty letter $a\in B_1(\Lambda)$ and $\uu\in B(\Lambda)$ there are a finite number of sets in $\V_\Lambda$ that cover all except a finite number of elements of $\F_\Lambda(a\uu)$, then $\Sigma_\Lambda$ is locally compact. If the previous property also holds for the empty word $\epsilon$, then $\Sigma_\Lambda$ is compact.
\end{cor}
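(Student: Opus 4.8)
The plan is to derive both assertions directly from the characterizations already established in Theorem \ref{theo:local_compactness} and Theorem \ref{theo:compactness}, so that this is genuinely a corollary rather than a fresh argument.

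For the local compactness statement, I would fix $w\in B_1(\Lambda)$ and apply Theorem \ref{theo:local_compactness} with the trivial family $\{\vv_\ell\}_{\ell\in\lambda}$ consisting of the single empty word $\epsilon$. The covering requirement $\bigcup_{\ell\in\lambda}Z_\Lambda(w\vv_\ell)\cap\Lambda=Z_\Lambda(w)\cap\Lambda$ is immediate, since $Z_\Lambda(w\epsilon)=Z_\Lambda(w)$. It then remains to check that for every $\uu\in B(\Lambda)$ with $w\uu\in B(\Lambda)$ there are finitely many sets of $\V_\Lambda$ covering all but finitely many elements of $\F_\Lambda(w\uu)$; but this is precisely the hypothesis of the corollary, applied to the nonempty letter $a=w$ and the word $\uu$. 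Hence the conditions of Theorem \ref{theo:local_compactness} hold for every $w\in B_1(\Lambda)$, and $\Sigma_\Lambda$ is locally compact.

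For the compactness statement, I would apply the hypothesis to the empty word $\epsilon$. Since $\F_\Lambda(\epsilon)=B_1(\Lambda)$, this yields finitely many sets $H_1,\dots,H_k\in\V_\Lambda$ and a finite set $F\subset B_1(\Lambda)$ with $B_1(\Lambda)\subset F\cup\bigcup_{i=1}^k H_i$. By Theorem \ref{theo:compactness} it is enough to show that in fact $\V_\Lambda=\{H_1,\dots,H_k\}$, in particular that $\V_\Lambda$ is finite. Suppose $H\in\V_\Lambda\setminus\{H_1,\dots,H_k\}$; then $B_1(\Lambda)\cap H$ is infinite by the definition of $\V_\Lambda$, and being contained in $F\cup\bigcup_{i=1}^k H_i$ with $F$ finite forces $|H\cap H_i|=\infty$ for some $i$. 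Since $H\neq H_i$, this contradicts the almost-disjointness imposed in \emph{Step 1} of the construction (distinct blurred sets have finite intersection). Therefore $\V_\Lambda$ is finite and covers all but finitely many elements of $B_1(\Lambda)$, and $\Sigma_\Lambda$ is compact by Theorem \ref{theo:compactness}.

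I do not expect a real obstacle, as this is a specialization of the general criteria; the only point that needs a short argument rather than a literal quotation of Theorem \ref{theo:compactness} is the upgrade from ``finitely many members of $\V_\Lambda$ cover $B_1(\Lambda)$ up to a finite set'' to ``$\V_\Lambda$ itself is finite'', and that is exactly where the almost-disjointness of the family $\V$ enters.
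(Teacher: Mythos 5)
Your proposal is correct and follows the same route as the paper: the first assertion is obtained, exactly as in the paper, by specializing Theorem \ref{theo:local_compactness} to the family $\{\vv_\ell\}$ consisting only of the empty word, and the second by taking $\uu=\epsilon$ so that the hypothesis gives finitely many sets of $\V_\Lambda$ covering all but finitely many elements of $\F_\Lambda(\epsilon)=B_1(\Lambda)$ and then invoking Theorem \ref{theo:compactness}. The one place you go beyond the paper's own (one-line) deduction is the upgrade from ``finitely many members of $\V_\Lambda$ cover $B_1(\Lambda)$ up to a finite set'' to ``$\V_\Lambda$ itself is finite,'' which Theorem \ref{theo:compactness} literally requires; your argument via the almost-disjointness of distinct blurred sets (if $H\in\V_\Lambda$ then $B_1(\Lambda)\cap H$ is infinite, so $H$ must meet some $H_i$ in an infinite set and hence equal it) is exactly the right way to close that small gap, which the paper leaves implicit.
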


\begin{proof} Note that for each nonempty word $\w=w_0...w_n\in B(\Lambda)$ we have $\F_\Lambda(\w)\subset\F_\Lambda(w_n)$. Hence, the first part of the corollary corresponds to the special case in Theorem \ref{theo:local_compactness} where, for each $w\in B_1(\Lambda)$, the correspondent family  $\{\vv_\ell\}_{\ell\in\lambda}$ is such that $\vv_\ell=\epsilon$ for all $\ell\in\lambda$. \\

If for the empty word $\epsilon$ we have a finite number of sets of $\V_\Lambda$ covering all except a finite number of elements of $\F_\Lambda(\epsilon\uu)$, then, in particular, a finite number of sets of $\V_\Lambda$ cover all except a finite number of elements of $\F_\Lambda(\epsilon)=B_1(\Lambda)$, and from Theorem \ref{theo:compactness} we conclude that $\Sigma_\Lambda$ is compact.

\end{proof}

\section{Shift commuting maps, continuity and generalized sliding block codes}\label{sec:GSBC_for_blur_shift}

In this section we present several results that characterize shift-commuting maps, with particular interest on the case where they are continuous or generalized sliding block codes. The results presented here hold for general blur shifts and they are expressed in the general framework, but their proofs share ideas with those given in 
 \cite{GS2019} in the particular context of Gonçalves-Royer ultragraph shifts. 

We remark that, differently than in \cite{GSS} where it was considered only Ott-Tomforde-Willis shift spaces over countable alphabets, or than in \cite{GS2019} where the alphabet was always countable, here we do not impose any restriction on the cardinality of the alphabet, which can be uncountable. Furthermore, even the case where the blur shift is not first countable is contemplated in our results.\\

Before starting examining the continuity of general shift-commuting maps, we remark that
as occurs in the particular cases studied in \cite{GRISU,Ott_et_Al2014}, the shift map itself is not, in general, everywhere continuous. It is encapsulated in the next proposition.

\begin{prop}\label{prop:cont_shift_map}
Let $\Sigma_\Lambda\subset \Sigma_{\A^\N}$ be a blur shift space. Then
\begin{enumerate}
\item  The shift map is continuous on $\Sigma_\Lambda\setminus \mathcal{L}_0 $;

\item   The shift map is continuous at $(\h\h\h...)\in \mathcal{L}_0(\Lambda)$ if, and only if,  there exists a finite $F\subset H$ such that  $\F_{\Lambda}(H\setminus F)\subset H$ and for all $g\in H$ the set $H\cap\Pp_\Lambda(g)$ is finite.
\end{enumerate}

\end{prop}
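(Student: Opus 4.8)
The plan is to analyze continuity of $\sigma$ at the two types of points of a blur shift separately, using the basis of generalized cylinders and the characterization of convergence of sequences recalled just before Example~\ref{ex:OTW_shift}. For part~(i), I would take a point $\x\in\Sigma_\Lambda\setminus\mathcal{L}_0$ and show that preimages of basic open neighborhoods of $\sigma(\x)$ contain basic open neighborhoods of $\x$. There are two cases. If $\x\in\Lambda$, then $\sigma(\x)\in\Lambda$ and a basic neighborhood $Z_\Lambda(\sigma(\x)_{[0,n]})$ of $\sigma(\x)$ has preimage containing $Z_\Lambda(x_0\sigma(\x)_{[0,n]})=Z_\Lambda(x_{[0,n+1]})$, a neighborhood of $\x$. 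If $\x\in\mathcal{L}_k(\Lambda)$ with $k\ge 1$, write $\x=(x_0\dots x_{k-1}\h\h\h\dots)$ with $x_0\in\A$; then $\sigma(\x)=(x_1\dots x_{k-1}\h\h\h\dots)\in\mathcal{L}_{k-1}(\Lambda)$, and a basic neighborhood $Z_\Lambda(x_1\dots x_{k-1}\bar H,F)$ has preimage containing $Z_\Lambda(x_0 x_1\dots x_{k-1}\bar H,F)$, a neighborhood of $\x$; here one uses that $x_0\dots x_{k-1}\in B(\Lambda)$ and $|\F_\Lambda(x_1\dots x_{k-1})\cap H|=\infty$ implies nothing problematic because the cylinder $Z_\Lambda(x_0\dots x_{k-1}\bar H,F)$ is nonempty precisely when $\x$ lies in it. The cleanest way to organize all of this is through sequences: since blur shifts are Fréchet--Urysohn (Proposition~\ref{prop:Frechet-Uryson_space}), continuity at a point is equivalent to sequential continuity there, so I would simply check that $\x^n\to\x$ forces $\sigma(\x^n)\to\sigma(\x)$, which is immediate from the description of convergence once $\x\notin\mathcal{L}_0$ (the first coordinate of $\x$ being eventually constant along $\x^n$ lets us ``peel it off'').

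For part~(ii), I would again use sequential continuity at $\x=(\h\h\h\dots)$. Note $\sigma(\x)=\x$, and the local basis at $\x$ is $\{Z_\Lambda(\bar H,F):F\subset H\text{ finite}\}$, each of which (intersected with $\Lambda$) contains exactly the sequences of $\Sigma_\Lambda$ whose zeroth coordinate is in $\bar H\setminus F$. So $\sigma$ is continuous at $\x$ iff for every finite $F\subset H$ there is a neighborhood $V$ of $\x$ with $\sigma(V)\subset Z_\Lambda(\bar H,F)$. A neighborhood of $\x$ has the form $Z_\Lambda(\bar H,F')$; its image under $\sigma$ consists of all $\sigma(\y)$ for $\y\in\Sigma_\Lambda$ with $y_0\in\bar H\setminus F'$. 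For such $\y$ either $y_0=\h$, in which case $\sigma(\y)=\y$ again has zeroth coordinate $\h\in\bar H$ — fine — or $y_0=g\in H\setminus F'$, in which case $\sigma(\y)$ has zeroth coordinate $y_1\in\F_\Lambda(g)$ (or $y_1=\tilde G$ for some $G$), and we need $y_1\in\bar H\setminus F$. This is where the two stated conditions enter: $\F_\Lambda(H\setminus F)\subset H$ guarantees the successor symbol (when it is an ordinary letter) stays inside $H$, and the finiteness of $H\cap\Pp_\Lambda(g)$ for each $g\in H$ is exactly what is needed so that, given finite $F\subset H$, only finitely many $g\in H$ have some word $g\,y_1$ with $y_1\in F$ allowed — equivalently $\bigcup_{y_1\in F}(H\cap\Pp_\Lambda(y_1))$ is finite, so enlarging $F'$ to include this finite set kills the bad cases. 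One must also handle the case $y_1=\tilde G\in\tilde\V$: then $(g\tilde G\tilde G\dots)\in\Sigma_\Lambda$ means $|\F_\Lambda(g)\cap G|=\infty$, and by the blurred-set axiom $|G\cap H|<\infty$ unless $G=H$, so after possibly excluding finitely many $g$ this forces $G=H$ and $\sigma(\y)$ again starts with $\h\in\bar H$; I would fold this into the same finite exclusion.

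For the converse direction of~(ii) I would argue by contrapositive, constructing explicit sequences in $\Sigma_\Lambda$ converging to $\x$ whose images do not converge to $\x$. If $\F_\Lambda(H\setminus F)\not\subset H$ for every finite $F$, then for each finite $F\subset H$ there is $g_F\in H\setminus F$ and $a_F\in\F_\Lambda(g_F)\setminus H$; running $F$ through an exhausting sequence of finite subsets of $H\cap B_1(\Lambda)$ produces $\y^n\to\x$ with $y^n_0=g_{F_n}$ distinct and $(\y^n)_1=a_{F_n}$, so $\sigma(\y^n)$ has zeroth coordinate outside $H$ and cannot converge to $\x$. If instead $H\cap\Pp_\Lambda(g)$ is infinite for some $g\in H$, pick distinct $h_1,h_2,\dots\in H\cap\Pp_\Lambda(g)$, set $\y^n=(h_n\,g\,\dots)$ completed to a point of $\Sigma_\Lambda$ (possible since $h_n g\in B(\Lambda)$); then $\y^n\to\x$ because $h_n\in H$ are distinct, but $\sigma(\y^n)$ all start with the fixed letter $g$, so $\sigma(\y^n)\to$ a point with zeroth coordinate $g\ne\h$, hence not $\x$.

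The main obstacle I anticipate is the bookkeeping in the forward direction of~(ii): one has to simultaneously control three kinds of behaviour of the first two coordinates of points in a small neighborhood of $\x$ (coordinate $0$ being $\h$, being a letter with an ordinary-letter successor, or being a letter with a blurred successor), and verify that a single finite enlargement of $F'$ handles all of them. The key realization that makes it go through is that "successor" relations $g\mapsto y_1$ with $y_1$ in a prescribed finite set $F$ correspond, reading backwards, to $g\in\Pp_\Lambda(y_1)$, so finiteness of each $H\cap\Pp_\Lambda(y_1)$ plus finiteness of $F$ yields a finite bad set of $g$'s to exclude; combined with the blurred-set intersection axiom to dispatch the blurred-successor case, this closes the argument.
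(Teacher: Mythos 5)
Your argument is correct and follows essentially the same route as the paper's proof: part (i) by exhibiting, for each basic neighborhood of $\s(\x)$, a generalized cylinder around $\x$ mapped into it, and part (ii) by translating $\s\bigl(Z_\Lambda(\bar H,F')\bigr)\subset Z_\Lambda(\bar H,F)$ into the conditions $\F_\Lambda(H\setminus F')\subset H$ and finiteness of $H\cap\Pp_\Lambda(g)$ via the follower/predecessor duality. Your treatment is slightly more detailed than the paper's (you handle the blurred-successor subcase $y_1=\tilde G$ explicitly via the finite-intersection axiom, and you run the converse through explicit sequences, where the distinctness of the chosen $g_n\in H$ is what guarantees convergence to $(\h\h\h\ldots)$ even when $H\cap B_1(\Lambda)$ is uncountable), but the underlying ideas coincide.
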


\begin{proof}\phantom\\
\begin{enumerate}
\item Let $\x=(x_0x_1x_2...)\in\Sigma_\Lambda\setminus\mathcal{L}_0$, and let $\y:=(y_0y_1...)=\s(\x)=(x_1x_2...)$. Recall that, since $\x\notin\mathcal{L}_0$, we have $x_0\in\A$.
We need to prove that for any given generalized cylinder $U$ containing $\y$, there exists a generalized cylinder $V$ containing $\x$ such that $\s(V)\subset U$. If $\y\in \Lambda$, then it is direct that given any $U:=Z_\Lambda(y_0...y_{k-1})=Z_\Lambda(x_1...x_k)$ it follows $V:=Z_\Lambda(x_0x_1...x_k)\ni\x$ is such that $\s(V)\subset U$. On the other hand, if $\y\in\mathcal{L}_n$ for some $n\in\N$, we have that a generalized cylinder containing $\y$ is in the form $U:=Z_\Lambda(y_0...y_{n-1}\bar H,F)=Z_\Lambda(x_1...x_n\bar H,F)$, and hence 
$V:=Z_\Lambda(x_0x_1...x_n\bar H,F)$ contains $\x$ and is such that  $\s(V)\subset U$.

\item The continuity of the shift map at some point $\x=(\h\h\h...)$ means that given a generalized cylinder neighborhood of $\s(\x)=\x$, say $U:=Z_\Lambda(\bar H,F')$ with $F'\subset H$ finite, there exists a generalized cylinder neighborhood of $\x$, say $V:=Z_\Lambda(\bar H,F)$ with $F\subset H$ finite, such that $\s(V)\subset U$. Note that $\s(V)\subset U$ means that $\F_\Lambda(H\setminus F)\subset H\setminus F'$.

Thus, setting $U:=Z_\Lambda(\bar H)$, the continuity of $\s$ at $\x$ means that $\F_{\Lambda}(H\setminus F)\subset H$ for some finite $F\subset H$. On the other hand, setting $U:=Z_\Lambda(\bar H,F')$ for a nonempty finite $F'\subset H$, the continuity of $\s$ at $\x$ means that there is a finite $F\subset H$ such that $\F_\Lambda(H\setminus F)\subset H\setminus F'$, which in its turn means that for each $g\in F'$ we have $\Pp_\Lambda(g)\cap H$ finite. Since $F'$ can be taken as any finite subset of $H$, it means that for any $g\in H$ we shall have  $\Pp_\Lambda(g)\cap H$ finite.
\end{enumerate}

\end{proof}

Note that condition {\em ii.}  in Proposition \ref{prop:cont_shift_map} above is just ensuring that if $(\x^n)_{n\in\N}\in\Sigma_\Lambda$ is such that $x^n_0\to \h$ as $n\to\infty$, then  $x^n_1\to \h$ as $n\to\infty$ as well. 

\begin{ex} Consider the alphabet $\A:=\N^2$ and  $\V:=\{H_m\}_{m\in\N}$ the family of  blurred sets where $H_m:=\{(m,k):\ k\in\N\}$ defined for each $m\in\N$.

Let $\Lambda\subset\A^\N$ be the shift where $(x_i)_{i\in\N}=(m_i,k_i)_{i\in\N}\in\Lambda$ if and only if for all $i\in\N$ we have $(m_{i+1},k_{i+1})$ is any whenever $k_i\leq m_i$, and $(m_{i+1},k_{i+1})$ is such that $m_{i+1}=m_i$ and $k_{i+1}\geq k_i$ whenever $k_i>m_i$.

 It follows that $\Sigma_\Lambda^\V$ satisfies the hypotheses in Proposition \ref{prop:cont_shift_map}.ii. and then the shift map is continuous on the whole $\Sigma_\Lambda^\V$. In fact, for all $m\in\N$ we have $\F_\Lambda(H_m\setminus\{(m,0),...,(m,m)\})=\{(m,k):\ k>m\}\subset H_m$, and for any $(m,\ell)\in H_m$ it follows that $\Pp_\Lambda\big((m,\ell)\big)\cap H_m=\big\{(m,k):\ k\leq\max\{m,\ell\}\big\}$.

\end{ex}

\subsection{Finitely defined sets}

In this subsection we recall the concepts of pseudo cylinders and finitely defined sets which were introduced in \cite{GSS,GSS1} and developed in \cite{GS2019}.\\

\begin{defn}\label{defn:pseudo_cylinder} A {\bf pseudo cylinder} in a blur shift space $\Sigma_\Lambda$ is a set of the form
$$[\w]_{k}^\ell:=\{(x_i)_{i\in\N}\in\Sigma_\Lambda: (x_{k}\ldots x_\ell)=\w\},$$
where $0\leq k\leq\ell$ and $w\in B_{\ell-k+1}(\Sigma_\Lambda)$. We also assume that the empty set is a pseudo cylinder.
\end{defn}

We recall that the topology of generalized cylinders when restricted to $\Lambda$ coincides with the product topology of $\A^\N$ restricted to $\Lambda$, which means that a pseudo cylinder $[\w]_k^\ell$ with $\w\in B(\Lambda)$ is always an open set of $\Lambda$ and so of $\Sigma_\Lambda$. However, it will be a closed set of  $\Sigma_\Lambda$ if and only if $k=0$ in its definition, in which case $[\w]_{0}^\ell=Z_\Lambda(\w)$. On the other hand, a pseudo cylinder $[\w]_{k}^\ell$, with $\w$ containing a symbol of $\tilde V$, is never an open set of $\Sigma_\Lambda$.

\begin{defn}\label{defn:finitely_defined_set} We say that $C\subset \Sigma_\Lambda$ is {\bf finitely defined} of $\Sigma_\Lambda$ if both $C$ and $C^c$ can be written as unions of pseudo cylinders.
\end{defn}

Clearly from the above definition, if $C$ is a finitely defined set, then $C^c$ is too. 
Intuitively, a set $C$ is finitely defined in $\Sigma_\Lambda$ if we can check whether or not any given $\x\in  \Sigma_\Lambda$ belongs to $C$ by knowing a finite number of coordinates of $\x$.
The empty set and $\Sigma_\Lambda$ are examples of finitely defined sets.  

The next proposition can be proved using the same approach than in propositions 5 and 6 of \cite{GS2019}.

\begin{prop}\label{prop:cylinders_pseudocylinders} Finite unions and finite intersections of generalized cylinders are finitely defined sets.
\end{prop}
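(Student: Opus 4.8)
The statement is that finite unions and finite intersections of generalized cylinders are finitely defined, i.e. that both the set and its complement are unions of pseudo cylinders. The plan is to reduce everything to two observations: first, that each single generalized cylinder $Z_\Lambda(\alpha)$ or $Z_\Lambda(\alpha\bar H,F)$ is finitely defined; and second, that the class of finitely defined sets is closed under finite unions and finite intersections. Combining these two facts gives the proposition immediately, since any finite union (resp. intersection) of generalized cylinders is then a finite union (resp. intersection) of finitely defined sets.

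\textbf{Step 1: single generalized cylinders are finitely defined.} For $Z_\Lambda(\alpha)$ with $\alpha=\alpha_0\ldots\alpha_{n-1}$ this is already noted in the text: $Z_\Lambda(\alpha)=[\alpha]_0^{n-1}$ is a pseudo cylinder, and its complement is $\bigcup_{k=0}^{n-1}\bigcup_{a\neq\alpha_k}[\alpha_0\ldots\alpha_{k-1}a]_0^{k}$, a union of pseudo cylinders. For $Z_\Lambda(\alpha\bar H,F)$ we write it as $Z_\Lambda(\alpha\bar H,F)=[\alpha]_0^{n-1}\cap\big(Z_\Lambda(\alpha\h)\cup\bigcup_{h\in H\setminus F}[\alpha h]_0^{n}\big)$; expanding the intersection shows $Z_\Lambda(\alpha\bar H,F)$ is a union of pseudo cylinders (note $Z_\Lambda(\alpha\h)=[\alpha\h]_0^n$ itself is one, using a letter of $\tilde V$, which Definition \ref{defn:pseudo_cylinder} allows since $\alpha\h\in B_{n+1}(\Sigma_\Lambda)$). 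For the complement, decompose $Z_\Lambda(\alpha\bar H,F)^c$ according to the first coordinate where a sequence departs from $\alpha$, together with the possibilities at coordinate $n$ (a letter not in $H\setminus F$, a symbol $\tilde G$ with $\tilde G\neq\tilde H$, or a symbol of $H\cap F$); each case is a pseudo cylinder of the form $[\gamma]_0^k$ or $[\gamma\tilde G]_0^{k}$, exactly as in the proof that generalized cylinders are clopen. This shows $Z_\Lambda(\alpha\bar H,F)$ is finitely defined.

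\textbf{Step 2: finitely defined sets form a (Boolean) algebra.} If $C_1,\ldots,C_m$ are finitely defined, write each $C_i$ and each $C_i^c$ as a union of pseudo cylinders. Then $\bigcup_i C_i$ is a union of pseudo cylinders, and its complement $\bigcap_i C_i^c$ is a finite intersection of unions of pseudo cylinders; distributing the intersection over the unions reduces this to a union of finite intersections of pseudo cylinders, so it remains only to note that a finite intersection of pseudo cylinders is again a union of pseudo cylinders — in fact a single pseudo cylinder or the empty set, since $[\w]_k^\ell\cap[\vv]_{k'}^{\ell'}$ either is empty (if the two prescribed words are incompatible on the overlap of $[k,\ell]$ and $[k',\ell']$) or prescribes a single word on $[\min(k,k'),\max(\ell,\ell')]$, provided that word lies in $B(\Sigma_\Lambda)$; if it does not lie in the language the intersection is empty. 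The case of $\bigcap_i C_i$ is symmetric (swap the roles of $C_i$ and $C_i^c$). Hence finite unions and finite intersections of finitely defined sets are finitely defined, and combined with Step 1 this proves the proposition. As the excerpt notes, this mirrors Propositions 5 and 6 of \cite{GS2019}.

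\textbf{Main obstacle.} None of the steps is deep; the only point requiring care is bookkeeping in Step 2, namely that a finite intersection of pseudo cylinders is a union of (indeed at most one) pseudo cylinder, which hinges on the elementary but slightly fiddly fact that two pseudo cylinders are compatible exactly when their prescribed words agree on the overlap of their index ranges, and that the amalgamated word must still be checked to belong to $B(\Sigma_\Lambda)$ (otherwise the intersection is empty, which is still a pseudo cylinder by convention). The rest is a routine distribution of unions over intersections and the decomposition-by-first-disagreement argument already used earlier in the paper for the clopenness of generalized cylinders.
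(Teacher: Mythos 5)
Your argument is correct and follows the standard route; the paper itself gives no proof of this proposition, delegating it to Propositions 5 and 6 of \cite{GS2019}, and your two steps (each generalized cylinder is finitely defined; the finitely defined sets are closed under finite unions and intersections) reconstruct exactly that argument. One small overstatement in Step 2: $[\w]_k^\ell\cap[\vv]_{k'}^{\ell'}$ need not be a single pseudo cylinder or empty when the index ranges $[k,\ell]$ and $[k',\ell']$ are disjoint and non-adjacent --- e.g.\ $[\w]_0^1\cap[\vv]_5^6$ constrains nothing at coordinates $2,3,4$ --- but it is still a union of pseudo cylinders (take the union of $[\w\uu\vv]_0^6$ over all gap-filling words $\uu$ with $\w\uu\vv\in B(\Sigma_\Lambda)$, the empty set arising when no such word exists), which is all your argument needs. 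In the present application every pseudo cylinder produced by your decompositions is of the form $[\gamma]_0^k$, so the ranges always overlap at $0$ and even your stronger claim holds there.
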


\qed

We remark that, in general, infinite unions or intersections of finitely defined sets are not finitely defined sets. Hence, we get that an infinite union of generalized cylinders might not be a finitely defined set.

\subsection{Shift-commuting maps}

In this subsection we shall present a characterization of shift-commuting maps.
In what follows, let $\A$  and $\mathcal{B}$ be two alphabets, and $\V\subset 2^\A$ and  $\U\subset2^\mathcal{B}$ be two families of blurred sets, 
$\Sigma_\Lambda\subset \Sigma_{\A^\N}^\V$ and $\Sigma_\Gamma\subset\Sigma_{\mathcal{B}^\N}^\U$ be two blur shift spaces, and $\Phi : \Sigma_\Lambda \to \Sigma_\Gamma$ be a map. For each $a\in \bar{\mathcal{B}}=\mathcal{B}\cup\tilde\U$ define \begin{equation}\label{Eq:Phi_partition}C_a:=\Phi^{-1}([a]_0^0).\end{equation}
Note that $\{C_a\}_{a\in \bar{\mathcal{B}}}$ is a partition of $\Sigma_\Lambda$.

\begin{prop}\label{prop:shift_invariant_maps}
A map $\Phi:\Sigma_\Lambda\to \Sigma_\Gamma$ is shift commuting (i.e. $\Phi \circ \s = \s \circ \Phi$) if, and only if, for all $\x\in \Sigma_\Lambda$ and $n\geq 0$ we have
    \begin{equation}\label{eq:shift_invariant_maps}\bigl(\Phi(\x)\bigr)_n=\sum_{a\in \bar{\mathcal{B}}}a\mathbf{1}_{C_a}\circ\sigma^{n}(\x), \end{equation}
where $\mathbf{1}_{C_a}$ is the
characteristic function of the set $C_a$ and $\sum$ stands for the symbolic sum.

\end{prop}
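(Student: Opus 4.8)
The plan is to prove the equivalence by unwinding the definitions of the shift map on blur shifts and the partition $\{C_a\}_{a\in\bar{\mathcal B}}$. First I would observe that, by the identification $\Sigma_{\A^\N}\equiv\{(x_i)_{i\in\N}\in\bar\A^\N:\ x_i\in\tilde\V\Rightarrow x_{i+1}\in\tilde\V\}$, the value of a point $\y\in\Sigma_\Gamma$ at coordinate $n$ is exactly the letter $b\in\bar{\mathcal B}$ such that $\sigma^n(\y)\in[b]_0^0$; that is, $y_n=\sum_{b\in\bar{\mathcal B}}b\,\mathbf 1_{[b]_0^0}(\sigma^n(\y))$, where the symbolic sum just picks out the unique nonzero term (the sets $[b]_0^0$, $b\in\bar{\mathcal B}$, partition $\Sigma_\Gamma$). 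Applying this with $\y=\Phi(\x)$ gives
\[
\bigl(\Phi(\x)\bigr)_n=\sum_{b\in\bar{\mathcal B}}b\,\mathbf 1_{[b]_0^0}\bigl(\sigma^n(\Phi(\x))\bigr).
\]
So the content of \eqref{eq:shift_invariant_maps} is precisely the assertion that $\sigma^n\circ\Phi=\Phi\circ\sigma^n$ on all of $\Sigma_\Lambda$, recast through the characteristic functions $\mathbf 1_{C_a}=\mathbf 1_{\Phi^{-1}([a]_0^0)}=\mathbf 1_{[a]_0^0}\circ\Phi$.

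For the forward direction, assume $\Phi\circ\sigma=\sigma\circ\Phi$. By induction this gives $\Phi\circ\sigma^n=\sigma^n\circ\Phi$ for every $n\ge 0$. Fix $\x\in\Sigma_\Lambda$ and $n\ge0$. Then $\bigl(\Phi(\x)\bigr)_n=\bigl(\sigma^n\Phi(\x)\bigr)_0=\bigl(\Phi(\sigma^n\x)\bigr)_0$. Now the $0$-th coordinate of any point $\z\in\Sigma_\Gamma$ is the unique $a\in\bar{\mathcal B}$ with $\z\in[a]_0^0$, so $\bigl(\Phi(\sigma^n\x)\bigr)_0=\sum_{a\in\bar{\mathcal B}}a\,\mathbf 1_{[a]_0^0}(\Phi(\sigma^n\x))=\sum_{a\in\bar{\mathcal B}}a\,\mathbf 1_{C_a}(\sigma^n\x)$, which is exactly \eqref{eq:shift_invariant_maps}. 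For the converse, assume \eqref{eq:shift_invariant_maps} holds for all $\x$ and all $n\ge0$. Taking $n$ and $n+1$ and comparing: $\bigl(\Phi(\x)\bigr)_{n+1}=\sum_a a\,\mathbf 1_{C_a}(\sigma^{n+1}\x)=\sum_a a\,\mathbf 1_{C_a}\bigl(\sigma^n(\sigma\x)\bigr)=\bigl(\Phi(\sigma\x)\bigr)_n$; hence $\sigma\bigl(\Phi(\x)\bigr)$ and $\Phi(\sigma\x)$ agree in every coordinate $n\ge0$, so $\sigma\circ\Phi=\Phi\circ\sigma$.

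The only genuinely delicate point — and the step I would be most careful about — is making sure the "symbolic sum'' is well defined on $\Sigma_\Gamma$ and that the single-coordinate cylinders $[a]_0^0$ for $a$ ranging over $\bar{\mathcal B}$ really do partition $\Sigma_\Gamma$, including the blurred symbols: under the standing identification a point of $\Sigma_\Gamma$ has at position $0$ either a genuine letter of $\mathcal B$ or a symbol $\tilde H\in\tilde\U$, and in the latter case all subsequent coordinates are forced to be $\tilde H$ as well, which is consistent with $\sigma$ acting coordinatewise. One should also note that $[a]_0^0=Z(a)$ is a (generalized) cylinder and hence clopen, so each $C_a=\Phi^{-1}([a]_0^0)$ is a well-defined subset of $\Sigma_\Lambda$ with no measurability or topological subtleties needed — the statement is purely set-theoretic once the coordinate/partition bookkeeping is in place. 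With that bookkeeping fixed, the proof is the short two-line argument above in each direction.
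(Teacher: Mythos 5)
Your proof is correct and is essentially the same coordinatewise unwinding that the paper relies on (it defers to Proposition 3 of \cite{GS2019}, which argues exactly this way): the identity $\bigl(\Phi(\x)\bigr)_n=\bigl(\sigma^n\Phi(\x)\bigr)_0$ together with the fact that the sets $[a]_0^0$, $a\in\bar{\mathcal{B}}$, partition $\Sigma_\Gamma$ gives the forward direction, and comparing coordinates $n+1$ of $\Phi(\x)$ with coordinate $n$ of $\Phi(\sigma\x)$ gives the converse. The only bookkeeping worth making explicit is that $\sigma\x\in\Sigma_\Lambda$ (Proposition \ref{prop:shift_of_a_shift}.\ref{prop:shift_of_a_shift_i}), so the hypothesis \eqref{eq:shift_invariant_maps} may indeed be applied to $\sigma\x$ in the converse step.
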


\qed

The above result has similar proof than Proposition 3 in \cite{GS2019}, while the next proposition corresponds to Proposition 4 and Corollary 2 in \cite{GS2019}.

\begin{prop}\label{sliding block code->preserves_period}
Let $\Phi:\Sigma_\Lambda\to \Sigma_\Gamma$ be a shift-commuting map. It follows that:

\begin{enumerate}

\item  If $\x\in \Sigma_\Lambda$ is a sequence with period $p\geq 1$ (that is, $\s^p(\x)=\x$) then $\Phi(\x)$ also has period $p$;

\item For all $\tilde G\in\tilde\U$, we have that $\s(C_{\tilde G})\subset C_{\tilde G}$.

\end{enumerate}
 
\end{prop}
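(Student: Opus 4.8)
The plan is to use the explicit formula \eqref{eq:shift_invariant_maps} from Proposition \ref{prop:shift_invariant_maps} together with the topological structure of blur shifts. For part (i), suppose $\x\in\Sigma_\Lambda$ satisfies $\s^p(\x)=\x$. Then for every $n\geq 0$ we compute, using \eqref{eq:shift_invariant_maps}, that $\bigl(\Phi(\x)\bigr)_{n+p}=\sum_{a\in\bar{\mathcal B}}a\,\mathbf 1_{C_a}\circ\s^{n+p}(\x)=\sum_{a\in\bar{\mathcal B}}a\,\mathbf 1_{C_a}\circ\s^{n}(\s^p(\x))=\sum_{a\in\bar{\mathcal B}}a\,\mathbf 1_{C_a}\circ\s^{n}(\x)=\bigl(\Phi(\x)\bigr)_n$; equivalently one just notes $\s^p(\Phi(\x))=\Phi(\s^p(\x))=\Phi(\x)$ directly from shift-commutativity. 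Hence $\Phi(\x)$ has period $p$. (If one wants the minimal period statement, note that any period $q$ of $\Phi(\x)$ need not be a period of $\x$, so only the stated direction holds, which matches the claim.)

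For part (ii), fix $\tilde G\in\tilde\U$ and let $\x\in C_{\tilde G}=\Phi^{-1}([\tilde G]_0^0)$, so $\bigl(\Phi(\x)\bigr)_0=\tilde G$. We must show $\s(\x)\in C_{\tilde G}$, i.e. $\bigl(\Phi(\s(\x))\bigr)_0=\tilde G$. By shift-commutativity, $\Phi(\s(\x))=\s(\Phi(\x))$, so $\bigl(\Phi(\s(\x))\bigr)_0=\bigl(\s(\Phi(\x))\bigr)_0=\bigl(\Phi(\x)\bigr)_1$. Now the key point is the structural constraint on points of $\Sigma_\Gamma$: by the identification of $\Sigma_{\mathcal B^\N}^\U$ with $\{(x_i)_{i\in\N}\in\bar{\mathcal B}^\N:\ x_i\in\tilde\U\Rightarrow x_{i+1}=x_i\}$ established in the Construction, any sequence whose $0$-th coordinate lies in $\tilde\U$ must be constant equal to that symbol from coordinate $0$ onward. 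Since $\bigl(\Phi(\x)\bigr)_0=\tilde G\in\tilde\U$, we conclude $\bigl(\Phi(\x)\bigr)_1=\tilde G$, hence $\bigl(\Phi(\s(\x))\bigr)_0=\tilde G$, i.e. $\s(\x)\in C_{\tilde G}$. This proves $\s(C_{\tilde G})\subset C_{\tilde G}$.

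The proof is essentially a direct bookkeeping argument, so there is no serious obstacle; the only thing to be careful about is part (ii), where one must invoke the defining property of blur shift sequences (a coordinate in $\tilde\U$ forces all later coordinates to equal it) rather than trying to argue from the partition $\{C_a\}$ abstractly. One should also note that no continuity of $\Phi$ is needed — only that $\Phi$ maps into $\Sigma_\Gamma$ and commutes with $\s$ — which is why the statement is placed before the continuity analysis.
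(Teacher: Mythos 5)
Your proof is correct and is essentially the argument the paper intends (the paper itself omits the proof, deferring to Proposition 4 and Corollary 2 of \cite{GS2019}): part (i) is immediate from $\s^p(\Phi(\x))=\Phi(\s^p(\x))=\Phi(\x)$, and part (ii) correctly combines $\bigl(\Phi(\s(\x))\bigr)_0=\bigl(\Phi(\x)\bigr)_1$ with the structural fact that in $\Sigma_{\mathcal{B}^\N}^{\U}$ a coordinate in $\tilde\U$ forces all subsequent coordinates to equal it. No gaps.
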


\qed

We will say that a shift-commuting map $\Phi:\Sigma_\Lambda\to \Sigma_\Gamma$ is {\bf length preserving} if it maps sequences from $\mathcal{L}_n(\Lambda)$ to $\mathcal{L}_n(\Gamma)$ for all $n\in\N\cup\{\infty\}$. Length-preserving maps were used in \cite{GRISU} and \cite{Ott_et_Al2014} to prove isomorphism between the $C^*$ -algebras of Ott-Tomforde-Willis edge shifts and Gonçalves-Royer ultragraph shifts. The next proposition gives a characterization of length-preserving maps.

\begin{prop}\label{prop:length-preserving_map} A  shift-commuting map $\Phi:\Sigma_\Lambda\to \Sigma_\Gamma$ is length-preserving if and only if $\mathcal{L}_0(\Lambda)=\bigcup_{\tilde G\in\tilde \U}C_{\tilde G}$.
\end{prop}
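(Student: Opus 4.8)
The plan is to reduce the stated set equality to the single identity $\Phi^{-1}\big(\mathcal L_0(\Gamma)\big)=\mathcal L_0(\Lambda)$, and then to use $\s$-equivariance of $\Phi$ together with part \ref{prop:shift_of_a_shift_ii} of Proposition \ref{prop:shift_of_a_shift} to upgrade this control of the $0$-th coordinate to control of all coordinates. First I would observe that a point $\y\in\Sigma_\Gamma$ has $y_0\in\tilde\U$ if and only if $\y\in\mathcal L_0(\Gamma)$: by the identification $\Sigma_{\mathcal B^\N}^{\U}\equiv\{(x_i)_{i\in\N}:x_i\in\tilde\U\Rightarrow x_{i+1}=x_i\}$, having $y_0=\tilde G\in\tilde\U$ forces $\y=(\tilde G\tilde G\tilde G\ldots)$, and since $\y$ uses only symbols of $B_1(\Gamma)\cup\tilde\U_\Gamma$ while $\tilde G\notin\mathcal B$, necessarily $\tilde G\in\tilde\U_\Gamma$, i.e. $\y\in\mathcal L_0(\Gamma)$; the converse is immediate. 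As $\x\in C_a\Longleftrightarrow(\Phi(\x))_0=a$, this yields
$$\bigcup_{\tilde G\in\tilde\U}C_{\tilde G}=\{\x\in\Sigma_\Lambda:(\Phi(\x))_0\in\tilde\U\}=\Phi^{-1}\big(\mathcal L_0(\Gamma)\big),$$
so the equality in the statement is equivalent to $\Phi^{-1}\big(\mathcal L_0(\Gamma)\big)=\mathcal L_0(\Lambda)$. I would also record that, since $\{\mathcal L_n(\Lambda)\}_{n\in\N\cup\{\infty\}}$ and $\{\mathcal L_n(\Gamma)\}_{n\in\N\cup\{\infty\}}$ are partitions of $\Sigma_\Lambda$ and $\Sigma_\Gamma$, the defining condition of length-preservation, ``$\Phi(\mathcal L_n(\Lambda))\subset\mathcal L_n(\Gamma)$ for every $n$'', is equivalent to ``$\Phi^{-1}\big(\mathcal L_n(\Gamma)\big)=\mathcal L_n(\Lambda)$ for every $n\in\N\cup\{\infty\}$''.

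For the ``only if'' direction: if $\Phi$ is length-preserving, then by the last remark the case $n=0$ already gives $\Phi^{-1}\big(\mathcal L_0(\Gamma)\big)=\mathcal L_0(\Lambda)$, which by the first remark is precisely the asserted equality.

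For the ``if'' direction, assume $\Phi^{-1}\big(\mathcal L_0(\Gamma)\big)=\mathcal L_0(\Lambda)$. Applying this membership criterion to $\s^k\x$ (which lies in $\Sigma_\Lambda$ by part \ref{prop:shift_of_a_shift_i} of Proposition \ref{prop:shift_of_a_shift}) and using $\Phi\circ\s^k=\s^k\circ\Phi$ (iterate $\Phi\circ\s=\s\circ\Phi$), I get, for all $\x\in\Sigma_\Lambda$ and $k\ge 0$,
$$\s^k\Phi(\x)\in\mathcal L_0(\Gamma)\quad\Longleftrightarrow\quad\s^k\x\in\mathcal L_0(\Lambda).$$
Now fix $n\in\N$ and $\x\in\mathcal L_n(\Lambda)$. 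By part \ref{prop:shift_of_a_shift_ii} of Proposition \ref{prop:shift_of_a_shift} and the disjointness of the $\mathcal L_j(\Lambda)$, the set $\{k\ge 0:\s^k\x\in\mathcal L_0(\Lambda)\}$ equals $\{k:k\ge n\}$; hence $\{k\ge 0:\s^k\Phi(\x)\in\mathcal L_0(\Gamma)\}=\{k:k\ge n\}$ as well. On the other hand, if $m\in\N\cup\{\infty\}$ is the unique index with $\Phi(\x)\in\mathcal L_m(\Gamma)$, the same proposition together with $\s(\Gamma)\subset\Gamma$ shows that $\{k\ge 0:\s^k\Phi(\x)\in\mathcal L_0(\Gamma)\}$ equals $\{k:k\ge m\}$ when $m$ is finite and is empty when $m=\infty$. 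Comparing these two descriptions forces $m=n$, i.e. $\Phi(\mathcal L_n(\Lambda))\subset\mathcal L_n(\Gamma)$. Finally, for $n=\infty$: if $\x\in\Lambda$ then $\s^k\x\in\Lambda$ for all $k$ (since $\s(\Lambda)\subset\Lambda$), so $\s^k\x\notin\mathcal L_0(\Lambda)$ for all $k$, hence $\s^k\Phi(\x)\notin\mathcal L_0(\Gamma)$ for all $k$; this rules out $\Phi(\x)\in\mathcal L_m(\Gamma)$ for any finite $m$, so $\Phi(\x)\in\mathcal L_\infty(\Gamma)=\Gamma$. Thus $\Phi$ is length-preserving.

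I expect the main obstacle to be organizational rather than deep: the whole argument rests on the reformulation in the first paragraph (recognizing that length-preservation is governed entirely by the level-$0$ fibre $\Phi^{-1}(\mathcal L_0(\Gamma))$, via the partition-by-levels and $\s$-equivariance), after which one only needs care with the index bookkeeping when iterating $\s$ — in particular with the case $n=\infty$, making sure that a genuine $\Lambda$-sequence cannot be mapped into some finite level $\mathcal L_m(\Gamma)$.
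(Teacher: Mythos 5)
Your proof is correct and takes essentially the same route as the paper: both reduce the claim to the identity $\Phi^{-1}\big(\mathcal{L}_0(\Gamma)\big)=\mathcal{L}_0(\Lambda)$ (via $\bigcup_{\tilde G\in\tilde\U}C_{\tilde G}=\Phi^{-1}(\mathcal{L}_0(\Gamma))$) and then use shift-commutativity together with $\s(\mathcal{L}_n)=\mathcal{L}_{n-1}$ to propagate level-$0$ control to all levels. The only difference is organizational: the paper proves the converse by contraposition, shifting $\min\{m,n\}$ times to expose a mismatch at level $0$, whereas you argue directly via the first hitting time of $\mathcal{L}_0$; the underlying idea is identical.
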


\begin{proof}

Just note that $\bigcup_{\tilde G\in\tilde \U}C_{\tilde G}=\Phi^{-1}\left(\mathcal{L}_0(\Gamma)\right)$. 
Hence, if $\Phi$ a is length-preserving shift-commuting map, then it is direct that all points of $\mathcal{L}_0(\Lambda)$, and only points of  $\mathcal{L}_0(\Lambda)$, can be mapped to points of $\mathcal{L}_0(\Gamma)$. Conversely, suppose $\Phi$ is not length preserving, and let $\x\in \mathcal{L}_m(\Lambda)$ for some $m\in \N\cup\{\infty\}$, such that $\Phi(\x)\in\mathcal{L}_n(\Gamma)$ with $n\neq m$. Since $\Phi$ is shift commuting, taking $k:=\min\{m,n\}$, we have that $\y:=\s^k(\x)\in \mathcal{L}_{m-k}(\Lambda)$ and $\Phi(\y)=\Phi(\s^k(\x))=\s^k(\Phi(\x))\in \mathcal{L}_{n-k}(\Gamma)$. Thus, either $\y$ is a point of $\mathcal{L}_0(\Lambda)$ which is not mapped to  $\mathcal{L}_0(\Gamma)$ or 
$\y$ is a point not in $\mathcal{L}_0(\Lambda)$ which is mapped to  $\mathcal{L}_0(\Gamma)$. In any case, it follows that $\mathcal{L}_0(\Lambda)\neq\Phi^{-1}\left(\mathcal{L}_0(\Gamma)\right)$.

\end{proof}

The next theorem characterizes general continuous shift-commuting maps (including those which are not generalized sliding block codes - see Definition \ref{defn:GSBC}). Its proof follows analogous outline than Theorem 3.7 in \cite{GS2019}.

\begin{theo}\label{theo:CSC}
If $\Phi:\Sigma_\Lambda\to \Sigma_\Gamma$ is continuous and shift commuting, then:

\begin{enumerate}
\item For each $a\in  \mathcal{B}$, the set $C_a$ is a (possibly empty) union of generalized cylinders of $\Sigma_\Lambda$;

\item If $\x=(x_0...x_{k-1}\h\h\h...)\in \mathcal{L}_k(\Lambda)$ and $\Phi(\x)=(y_0,...y_{\ell-1}\tilde G\tilde G\tilde G...)\in \mathcal{L}_\ell(\Gamma)$, then $\ell\leq k$ and for all $Z_\Gamma(\bar G,F)$ there exists a finite $F'\subset H$ such that $\Phi\big(Z_\Lambda(x_{k-\ell}...x_{k-1}\bar H,F')\big)\subset  Z_\Gamma(\bar G,F)$.

\item If $\x=(\h\h\h...)\in \mathcal{L}_0(\Lambda)$ and $\Phi(\x)=(ddd...)\in\Gamma$, then for all $M>0$ there exists a generalized cylinder $Z_\Lambda(\bar H,F)$ such that $\s^i(Z_\Lambda(\bar H,F)) \subseteq C_d$ for all $i=0, 1, \ldots, M$.

\end{enumerate}

Conversely, if $\Phi$ is continuous on the points of $\Lambda\cap\Phi^{-1}\big(\mathcal{L}_0(\Gamma)\big)$ and condition $i.-iii.$ hold, then $\Phi$ is continuous on the whole $\Sigma_\Lambda$.
\end{theo}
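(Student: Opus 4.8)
The plan is to prove the three necessary conditions by exploiting continuity of $\Phi$ at individual points (using the generalized-cylinder basis from Proposition \ref{prop:basis_top}), and then to prove the converse by checking continuity at an arbitrary point of $\partial\Lambda$ by cases on the image.

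\textbf{Necessity of (i).} Fix $a\in\mathcal{B}$. Since $a$ is a symbol of the alphabet (not a blurred symbol), the singleton $\{a\}$ is open in $\bar{\mathcal{B}}$, so $[a]_0^0=Z_\Gamma(a)$ is open in $\Sigma_\Gamma$; hence $C_a=\Phi^{-1}(Z_\Gamma(a))$ is open in $\Sigma_\Lambda$. An open set is the union of all basic open sets it contains, so $C_a$ is a union of generalized cylinders. (Note the contrast with $C_{\tilde G}$: there $[\tilde G]_0^0$ need not be open, which is why only $a\in\mathcal{B}$ is treated here.)

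\textbf{Necessity of (ii).} Let $\x=(x_0\ldots x_{k-1}\h\h\h\ldots)\in\mathcal{L}_k(\Lambda)$ with $\Phi(\x)=(y_0\ldots y_{\ell-1}\tilde G\tilde G\ldots)\in\mathcal{L}_\ell(\Gamma)$. First I would show $\ell\le k$: since $\Phi$ commutes with $\s$, $\s^k(\x)=(\h\h\h\ldots)\in\mathcal{L}_0(\Lambda)$ is mapped to $\s^k(\Phi(\x))$, which would lie in $\mathcal{L}_{\ell-k}(\Gamma)$ if $\ell>k$; but $\s^k(\x)\in\mathcal{L}_0$ and one checks via Proposition \ref{sliding block code->preserves_period}(ii) (or directly, since $\s(C_{\tilde G})\subset C_{\tilde G}$ forces the image of a point in $\mathcal{L}_0$ to stay ``blurred from the start'' once it is blurred at all) that $\s^k(\x)$ cannot be mapped strictly later — this rules out $\ell>k$. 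Then, given any basic neighbourhood $Z_\Gamma(\bar G,F)$ of $\Phi(\x)$, continuity at $\x$ produces a basic neighbourhood $Z_\Lambda(x_0\ldots x_{k-1}\bar H,F')$ of $\x$ (necessarily of this form, since $\x\in\mathcal{L}_k$) with $\Phi(Z_\Lambda(x_0\ldots x_{k-1}\bar H,F'))\subset Z_\Gamma(\bar G,F)$; applying $\s^{k-\ell}$ and using shift-commutation gives $\Phi(Z_\Lambda(x_{k-\ell}\ldots x_{k-1}\bar H,F'))\subset \s^{k-\ell}(Z_\Gamma(\bar G,F))=Z_\Gamma(\bar G,F)$, as desired.

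\textbf{Necessity of (iii).} Let $\x=(\h\h\h\ldots)\in\mathcal{L}_0(\Lambda)$ with $\Phi(\x)=(ddd\ldots)\in\Gamma$. Fix $M>0$. Since $\Phi(\x)\in Z_\Gamma(d^{M+1})=Z_\Gamma(d\ldots d)$ (length $M+1$), continuity gives a neighbourhood $Z_\Lambda(\bar H,F)$ of $\x$ with $\Phi(Z_\Lambda(\bar H,F))\subset Z_\Gamma(d^{M+1})$. For $\w\in Z_\Lambda(\bar H,F)$ and $0\le i\le M$, $(\Phi(\w))_i=d$, i.e. $\s^i(\w)\in C_d$ by \eqref{Eq:Phi_partition}; thus $\s^i(Z_\Lambda(\bar H,F))\subseteq C_d$ for all $i=0,\ldots,M$.

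\textbf{The converse.} Assume (i)--(iii) hold and $\Phi$ is continuous at every point of $\Lambda\cap\Phi^{-1}(\mathcal{L}_0(\Gamma))$. Because $\Sigma_\Lambda$ is Fréchet–Uryson (Proposition \ref{prop:Frechet-Uryson_space}), it suffices to check sequential continuity. The only points whose continuity is not yet granted are (a) points of $\Lambda\setminus\Phi^{-1}(\mathcal{L}_0(\Gamma))$ and (b) points of $\partial\Lambda$. For (a): if $\x\in\Lambda$ with $\Phi(\x)=(y_i)_{i\in\N}\in\Gamma\cup(\partial\Gamma\setminus\mathcal{L}_0(\Gamma))$, then for a basic neighbourhood of $\Phi(\x)$ determined by coordinates $0,\ldots,N$ (a generalized cylinder $Z_\Gamma(y_0\ldots y_N)$ or $Z_\Gamma(y_0\ldots y_{N-1}\bar G,F)$ with $N\ge 1$), condition (i) says each $C_{y_j}$ for $j\le N$ with $y_j\in\mathcal{B}$ is a union of generalized cylinders containing $\s^j(\x)$, hence contains a basic neighbourhood $Z_\Lambda(\s^j(\x)_{[0,m_j]})$; pulling these back through $\s^j$ and intersecting over $j=0,\ldots,N$ (finitely many, using Proposition \ref{prop:cylinders_pseudocylinders} / Remark that finite intersections of generalized cylinders are finitely defined) yields a generalized cylinder around $\x$ mapped into the target — here one uses that $\x\notin\Phi^{-1}(\mathcal{L}_0(\Gamma))$ so $\Phi(\x)$ is not ``blurred from coordinate $0$'' and the last coordinate $y_N$, if a blurred symbol $\tilde G$, is handled by Proposition \ref{sliding block code->preserves_period}(ii). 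For (b): take $\x=(x_0\ldots x_{k-1}\h\h\h\ldots)\in\mathcal{L}_k(\Lambda)$ and a convergent sequence $\x^n\to\x$; I split into the cases $\Phi(\x)\in\Gamma$, $\Phi(\x)\in\mathcal{L}_\ell(\Gamma)$ with $1\le\ell\le k$, and $\Phi(\x)\in\mathcal{L}_0(\Gamma)$ — i.e. $k\ge 1$ and $\s^{k-?}$ lands in $\mathcal{L}_0$; in the first subcase one uses (iii) applied to $\s^{k}(\x)\in\mathcal{L}_0(\Lambda)$ together with (i) for the finitely many initial coordinates of $\Phi(\x)$; in the second subcase one uses (ii) directly to produce, from a basic neighbourhood $Z_\Gamma(y_0\ldots y_{\ell-1}\bar G,F)$ of $\Phi(\x)$, a basic neighbourhood $Z_\Lambda(x_0\ldots x_{k-1}\bar H,F')$ of $\x$ mapped into it (combining with (i) to fix the coordinates $y_0,\ldots,y_{\ell-1}$); the third subcase reduces, after applying a suitable power of $\s$, to continuity at a point of $\mathcal{L}_0(\Lambda)$ whose image is in $\mathcal{L}_0(\Gamma)$, i.e. to condition (ii) with $\ell=0$ read as: for every $Z_\Gamma(\bar G,F)$ there is a finite $F'$ with $\Phi(Z_\Lambda(x_{k}\ldots\,\bar H,F'))\subset Z_\Gamma(\bar G,F)$.

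\textbf{Expected main obstacle.} The routine part is (i) and (iii); the delicate bookkeeping is in the converse, specifically reconciling the two ``modes'' by which $\Phi(\x)$ can fail to be determined by finitely many coordinates of $\x$: a blurred coordinate of $\Phi(\x)$ coming from a blurred coordinate of $\x$ (governed by (ii)), versus $\Phi(\x)$ being an ordinary sequence in $\Gamma$ whose every coordinate is nonetheless read off arbitrarily deep coordinates of $\x$ — the case $\x\in\Lambda$ but $\Phi(\x)\in\mathcal{L}_0(\Gamma)$, which is exactly why that case is excluded from the hypothesis and must instead be handled by (iii). Getting the finite-intersection argument to produce a single basic neighbourhood of $\x$ — rather than merely an open set — while simultaneously respecting that $\x$ may itself be a blurred point (so its basic neighbourhoods have the restricted form $Z_\Lambda(x_0\ldots x_{k-1}\bar H,F')$) is where the care is needed.
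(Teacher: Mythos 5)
Your treatment of (i), of (iii), and of the inequality $\ell\leq k$ is correct (the paper itself gives no proof here, deferring to the outline of Theorem 3.7 of \cite{GS2019}, so these parts must stand on their own, and they do). The genuine gap is in your proof of the second half of (ii). First, $Z_\Gamma(\bar G,F)$ is a basic neighbourhood of $(\tilde G\tilde G\tilde G\ldots)=\s^{\ell}(\Phi(\x))$, not of $\Phi(\x)$: when $\ell\geq 1$ the basic neighbourhoods of $\Phi(\x)$ have the form $Z_\Gamma(y_0\ldots y_{\ell-1}\bar G,F)$, so ``continuity at $\x$'' cannot be invoked with target $Z_\Gamma(\bar G,F)$. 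Second, even after correcting the target, the step ``applying $\s^{k-\ell}$'' does not deliver the stated conclusion: one only obtains information about $\Phi\bigl(\s^{k-\ell}(Z_\Lambda(x_0\ldots x_{k-1}\bar H,F'))\bigr)$, and $\s^{k-\ell}(Z_\Lambda(x_0\ldots x_{k-1}\bar H,F'))$ is in general a \emph{proper} subset of $Z_\Lambda(x_{k-\ell}\ldots x_{k-1}\bar H,F')$, since a point of the latter cylinder need not admit a left-extension by $x_0\ldots x_{k-\ell-1}$ inside $\Lambda$; nothing is concluded about the images of the remaining points. Third, the identity $\s^{k-\ell}(Z_\Gamma(\bar G,F))=Z_\Gamma(\bar G,F)$ is false: the shift does not preserve such a cylinder (the image of $(g z_1 z_2\ldots)$ with $g\in G\setminus F$ has first coordinate $z_1$, which need not lie in $\bar G\setminus F$).

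The correct mechanism is to apply continuity of $\Phi$ \emph{directly at the shifted point} $\s^{k-\ell}(\x)=(x_{k-\ell}\ldots x_{k-1}\h\h\h\ldots)$, whose basic neighbourhoods are exactly the cylinders $Z_\Lambda(x_{k-\ell}\ldots x_{k-1}\bar H,F')$. Doing so exposes the real issue: $Z_\Gamma(\bar G,F)$ is a neighbourhood of $\Phi(\s^{k-\ell}(\x))=\s^{k-\ell}(\Phi(\x))$ only when $k\geq 2\ell$, because otherwise the zeroth coordinate of $\s^{k-\ell}(\Phi(\x))$ is $y_{k-\ell}\in\mathcal{B}$ and is excluded from $Z_\Gamma(\bar G,F)$ as soon as $y_{k-\ell}\in F$. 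Concretely, taking $\Phi=\s$ on a blur shift where the shift map is everywhere continuous (as in the example following Proposition \ref{prop:cont_shift_map}) and $\x\in\mathcal{L}_3(\Lambda)$, one has $k=3$, $\ell=2$, and no finite $F'$ makes $\s\bigl(Z_\Lambda(x_1x_2\bar H,F')\bigr)\subset Z_\Gamma(\bar H,\{x_2\})$. So the claim as literally written cannot be derived by your manipulation; the statement one can actually prove by continuity at a shifted point concerns $\s^{\ell}(\x)$ and the cylinders $Z_\Lambda(x_\ell\ldots x_{k-1}\bar H,F')$, for which $\Phi(\s^{\ell}(\x))=(\tilde G\tilde G\tilde G\ldots)$ does lie in every $Z_\Gamma(\bar G,F)$. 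You should rework (ii) along these lines rather than by pushing cylinders forward with $\s^{k-\ell}$. The converse is only a sketch, but its case decomposition and its use of the hypothesis on $\Lambda\cap\Phi^{-1}\bigl(\mathcal{L}_0(\Gamma)\bigr)$ are the right ingredients.
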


\qed

\subsection{Generalized sliding block codes and continuous shift-commuting maps}

We conclude this section, by stating the analogous of the Curtis-Hedlund-Lyndon theorem in the context of blur shifts.

\begin{defn}\label{defn:GSBC} A map $\Phi:\Sigma_\Lambda\to \Sigma_\Gamma$ is a {\bf generalized sliding block code} if, and only if, for all $\x\in\Sigma_\Lambda$ and $i\in\N$ we have

$$\big(\Phi(\x)\big)_i=\sum_{a\in\tilde\B}a\e_{C_a}\circ\s^i(\x),$$

where each $C_a$ is a finitely defined set.
  
\end{defn}

Generalized sliding block codes were proposed in \cite{SobottkaGoncalves2017} as the natural generalization of the concept of sliding block codes in the context of classical shift spaces over infinite alphabets, where it was proved that the class of generalized sliding block codes coincides with the class of continuous shift-commuting maps. In the contexts of one-sided and two-sided Ott-Tomforde-Willis shifts (see \cite{GSS} and \cite{GSS1}, respectively), and Gonçalves-Royer ultragraph shifts \cite{GS2019}, the class of generalized sliding block codes and the class of continuous shift-commuting maps do not coincide in general, but it is possible to obtain sufficient and necessary conditions under which those classes coincide.

\begin{prop}\label{prop:sliding_bock_codes_continuous_on_infinity_sequences} If $\Phi:\Sigma_\Lambda\to \Sigma_\Gamma$  is a generalized sliding block code then it is continuous on $\Lambda$ and shift commuting. 
\end{prop}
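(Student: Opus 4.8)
The plan is to prove the two assertions — continuity on $\Lambda$ and shift-commutation — separately, starting with the easy one. Shift-commutation is immediate from the defining formula: if $\Phi$ is a generalized sliding block code, then $\bigl(\Phi(\x)\bigr)_i = \sum_{a\in\tilde\B}a\,\e_{C_a}\circ\s^i(\x)$, so $\bigl(\Phi(\s\x)\bigr)_i = \sum_a a\,\e_{C_a}\circ\s^{i+1}(\x) = \bigl(\Phi(\x)\bigr)_{i+1} = \bigl(\s\Phi(\x)\bigr)_i$ for every $i$, whence $\Phi\circ\s = \s\circ\Phi$. This reproduces the structure of Proposition \ref{prop:shift_invariant_maps} and needs no topology.

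For continuity on $\Lambda$, I would fix $\x\in\Lambda$ and a basic open neighborhood $U$ of $\Phi(\x)$ in $\Sigma_\Gamma$, and produce a generalized cylinder around $\x$ mapped into $U$. Since $\x\in\Lambda$, it has only genuine alphabet symbols, and $\Phi(\x)$ lies in $\Gamma$ or in some $\mathcal{L}_\ell(\Gamma)$. The key point is that each $C_a = \Phi^{-1}([a]_0^0)$ is finitely defined, hence a union of pseudo cylinders; more to the point, for the finitely many (or one) coordinates of $\Phi(\x)$ that the neighborhood $U$ constrains, membership of $\s^i(\x)$ in the relevant $C_a$ is determined by finitely many coordinates of $\x$. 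I would make this precise by: (a) if $\Phi(\x)\in\Gamma$, then $U\supseteq Z_\Gamma(y_0\dots y_{n-1})$ for some $n$, and each condition $\bigl(\Phi(\x)\bigr)_i = y_i$ for $i<n$ says $\s^i(\x)\in C_{y_i}$; since $C_{y_i}$ and its complement are unions of pseudo cylinders and $\x$ lies in a pseudo cylinder contained in $C_{y_i}$, there is a finite window $[k_i,\ell_i]$ of coordinates of $\s^i(\x)$ — equivalently coordinates $[i+k_i, i+\ell_i]$ of $\x$ — witnessing this; taking $N$ larger than all $i+\ell_i$ and all $n$, the pseudo cylinder $[\x_{[0,N]}]_0^N = Z_\Lambda(\x_{[0,N]})$ is an open neighborhood of $\x$ all of whose points agree with $\x$ on enough coordinates to force their image into $Z_\Gamma(y_0\dots y_{n-1})\subseteq U$. (b) If $\Phi(\x)\in\mathcal{L}_\ell(\Gamma)$, say $\Phi(\x) = (y_0\dots y_{\ell-1}\tilde G\tilde G\dots)$, a basic neighborhood is $Z_\Gamma(y_0\dots y_{\ell-1}\bar G, F)$ with $F\subset G$ finite; this constrains coordinates $0,\dots,\ell$ of $\Phi(\x)$, namely $\s^i(\x)\in C_{y_i}$ for $i<\ell$ and $\s^\ell(\x)\in \bigcup_{g\in G\setminus F}C_g \,\cup\, C_{\tilde G}$. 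The first $\ell$ conditions are handled exactly as in case (a). The last condition is the delicate one: I would use that $\bigl(\Phi(\x)\bigr)_\ell = \tilde G$, i.e. $\s^\ell(\x)\in C_{\tilde G}$, and that $C_{\tilde G}$ is finitely defined, hence contains a pseudo cylinder $[\w]_k^m$ with $\s^\ell(\x)\in[\w]_k^m$; every point of the corresponding neighborhood $Z_\Lambda(\x_{[0,N']})$ with $N'$ large then has its $\ell$-th image-coordinate in $C_{\tilde G}$, hence its image lies in $Z_\Gamma(y_0\dots y_{\ell-1}\bar G,\emptyset)\supseteq$ the target up to the finite set $F$ — but to kill $F$ as well I must also observe that if a point $\z$ close to $\x$ had $\bigl(\Phi(\z)\bigr)_\ell = g\in F$, that would again be forced by finitely many coordinates of $\z$ lying in the pseudo cylinders making up $C_g$, contradicting $\z$ agreeing with $\x$ on a long enough prefix (since $\x\notin C_g$, $\x$ lies in a pseudo cylinder of $C_g^c$). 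Enlarging $N'$ to absorb these finitely many constraints from $\{C_g : g\in F\}$ finishes the argument.

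I expect the main obstacle to be bookkeeping rather than any genuine difficulty: one must argue that, for a point $\x$ with all coordinates in $\A$, membership of each shift $\s^i(\x)$ in a finitely defined set is witnessed by a finite window, and that finitely many such windows (one per constrained coordinate of $\Phi(\x)$, plus one per element of $F$ in the $\mathcal{L}_\ell$ case) can be merged into a single long prefix of $\x$, which is exactly a generalized cylinder $Z_\Lambda(\x_{[0,N]})$. The only subtlety worth flagging explicitly is why the argument does \emph{not} extend to points of $\partial\Lambda$: there, a point $\x = (x_0\dots x_{k-1}\h\h\dots)$ has no long prefix of alphabet symbols, so the "finite window" for coordinates $\s^i(\x)$ with $i\geq k$ involves the blurred symbol $\h$, and a finitely defined set need not be open; this is precisely the gap that later requires the extra hypotheses in Theorem \ref{theo:CSC}. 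I would conclude the proof right after the $\mathcal{L}_\ell$ case, noting that $\Phi(\x)\in\Gamma$ and $\Phi(\x)\in\partial\Gamma$ exhaust the possibilities for $\x\in\Lambda$.
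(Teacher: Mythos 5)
Your argument is correct and is exactly the adaptation of \cite[Corollary 3]{GS2019} that the paper invokes without writing out: shift-commutation falls out of the defining formula via Proposition \ref{prop:shift_invariant_maps}, and continuity at $\x\in\Lambda$ uses that each relevant finitely defined set contains a pseudo cylinder through the corresponding shift of $\x$ whose defining word, because $\x$ has only alphabet coordinates, pins down a finite window of $\x$, so a long enough prefix $Z_\Lambda(\x_{[0,N]})$ forces the image into the target cylinder. One small simplification: in your case (b) the final step about ``killing $F$'' is redundant, since once $\s^\ell(\z)\in C_{\tilde G}$ is forced you get $\bigl(\Phi(\z)\bigr)_\ell=\tilde G\in\bar G\setminus F$ for every finite $F\subset G$, so $\Phi(\z)\in Z_\Gamma(y_0\dots y_{\ell-1}\bar G,F)$ without any further constraint coming from the sets $C_g$ with $g\in F$.
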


\qed

A proof for the previous proposition can be adapted from \cite[Corollary 3]{GS2019}. In what follows we state sufficient and necessary conditions under which both classes coincide in the general context of blur shifts. The proof of Theorem \ref{theo:general-CHL-T} can be adapted from \cite[Theorem 3.8]{GS2019}.\\

\begin{theo}\label{theo:general-CHL-T}
Suppose that  $\Phi:\Sigma_\Lambda\to \Sigma_\Gamma$ is a map such that for each $\tilde G\in \tilde\U$ the set $C_{\tilde G}$ is a finitely defined set. Then $\Phi$ is continuous and shift commuting if, and only if, $\Phi$ is a generalized sliding block code where:
\begin{enumerate}
\item\label{theo:general-CHL-Theo:condition_1} For any $a\in \mathcal{B}$, the set $C_a$ is a (possibly empty) union of generalized cylinders of $\Sigma_\Lambda$;

\item\label{theo:general-CHL-Theo:condition_2} If $( x_0\ldots  x_{n-1} \h\h\h\ldots)\in \partial\Lambda$  is such that $\Phi( x_0\ldots  x_{n-1} \h\h\h\ldots)=(\tilde G\tilde G\tilde G...\ldots)\in\partial\Gamma$, then:
 \begin{enumerate}[a.]
 \item There exists a finite $F \subset H$ such that $\Phi\big(Z_\Lambda( x_0\ldots  x_{n-1} \bar H,F)\big) \subset Z_\Gamma(\bar G)$;

\item For each $g\in G$, there are just a finite number of $h\in H$ for which there exists some sequence $( x_0\ldots  x_{n-1} h ...)$ in $C_g$.

\end{enumerate}

\item\label{theo:general-CHL-Theo:condition_3} If $\h\in\tilde\V$ is such that $\Phi(\h\h\h\ldots)=(ddd\ldots)\in\Gamma$, then for all $M\geq 1$ there exists a generalized cylinder $Z_\Lambda(\bar H,F)$ such that $\s^i\big(Z_\Lambda(\bar H,F)\big) \subseteq C_d$ for all $i=0, 1, \ldots, M$.

\end{enumerate}

\end{theo}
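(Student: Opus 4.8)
The plan is to deduce the theorem from the structural results already in hand, following the outline of \cite[Theorem 3.8]{GS2019}. Everything rests on two facts: Proposition \ref{prop:shift_invariant_maps}, which identifies a shift-commuting $\Phi$ with the partition $\{C_a\}_{a\in\bar{\B}}$, $C_a=\Phi^{-1}([a]_0^0)$, through the symbolic sum $\bigl(\Phi(\x)\bigr)_i=\sum_a a\,\mathbf{1}_{C_a}\circ\s^i(\x)$; and Theorem \ref{theo:CSC}, whose converse certifies continuity on all of $\Sigma_\Lambda$ once one knows continuity at the points of $\Lambda\cap\Phi^{-1}(\mathcal{L}_0(\Gamma))$ together with its conditions \textit{i.}--\textit{iii.} Thus it suffices to show that, under the standing hypothesis on the sets $C_{\tilde G}$, being a generalized sliding block code obeying \ref{theo:general-CHL-Theo:condition_1}--\ref{theo:general-CHL-Theo:condition_3} is equivalent to being continuous and shift commuting; the latter is in turn equivalent, by Theorem \ref{theo:CSC}, to being shift commuting, continuous at the points of $\Lambda\cap\Phi^{-1}(\mathcal{L}_0(\Gamma))$, and satisfying \textit{i.}--\textit{iii.} of that theorem, which is the form actually used below.

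For the direct implication, put $\Phi$ in the normal form of Proposition \ref{prop:shift_invariant_maps} and check that each $C_a$ is finitely defined. For $a\in\B$ this is free: $[a]_0^0=Z_\Gamma(a)$ is clopen in $\Sigma_\Gamma$, so $C_a$ is clopen in $\Sigma_\Lambda$, and both $C_a$ and $C_a^c$, being open, are unions of generalized cylinders, hence unions of pseudo cylinders (writing a type-2 generalized cylinder as $Z(\alpha\bar H,F)=[\alpha\h]_0^{|\alpha|}\cup\bigcup_{h\in H\setminus F}[\alpha h]_0^{|\alpha|}$). For $a=\tilde G\in\tilde\U$ finite definability is the hypothesis. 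Hence $\Phi$ is a generalized sliding block code. Conditions \ref{theo:general-CHL-Theo:condition_1} and \ref{theo:general-CHL-Theo:condition_3} are items \textit{i.} and \textit{iii.} of Theorem \ref{theo:CSC}; and \ref{theo:general-CHL-Theo:condition_2} is read off from continuity of $\Phi$ at the point $(x_0\ldots x_{n-1}\h\h\h\ldots)$ (as made explicit in item \textit{ii.} of Theorem \ref{theo:CSC}): feeding the target $Z_\Gamma(\bar G)$ gives a source cylinder $Z_\Lambda(x_0\ldots x_{n-1}\bar H,F)$ pushed into it, which is \textit{a.}, and feeding $Z_\Gamma(\bar G,\{g\})$, $g\in G$, gives a source cylinder all of whose sequences lie outside $C_g$, which forces \textit{b.}

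For the converse, Proposition \ref{prop:sliding_bock_codes_continuous_on_infinity_sequences} already yields that a generalized sliding block code is shift commuting and continuous at every point of $\Lambda$ (in particular at those of $\Lambda\cap\Phi^{-1}(\mathcal{L}_0(\Gamma))$), so it only remains to verify conditions \textit{i.}--\textit{iii.} of Theorem \ref{theo:CSC} and invoke its converse. Items \textit{i.} and \textit{iii.} are exactly \ref{theo:general-CHL-Theo:condition_1} and \ref{theo:general-CHL-Theo:condition_3}. For \textit{ii.}, take $\x=(x_0\ldots x_{k-1}\h\h\h\ldots)$ with $\Phi(\x)\in\mathcal{L}_\ell(\Gamma)$: applying Proposition \ref{sliding block code->preserves_period} to the period-$1$ point $\s^k(\x)=(\h\h\h\ldots)$ forces $\ell\le k$; a suitable power of $\s$ then reduces to a point whose image lies in $\mathcal{L}_0(\Gamma)$, to which \ref{theo:general-CHL-Theo:condition_2} applies; and finally one enlarges the exceptional finite set by the finitely many symbols supplied by part \textit{b.} of \ref{theo:general-CHL-Theo:condition_2}, using Remark \ref{rmk:finite_subcover}, to promote an inclusion $\Phi\bigl(Z_\Lambda(\cdots\bar H,F)\bigr)\subset Z_\Gamma(\bar G)$ to $\Phi\bigl(Z_\Lambda(\cdots\bar H,F')\bigr)\subset Z_\Gamma(\bar G,\tilde F)$ for any prescribed finite $\tilde F\subset G$.

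The main obstacle is precisely this translation between condition \ref{theo:general-CHL-Theo:condition_2} and item \textit{ii.} of Theorem \ref{theo:CSC}: one has to rebuild the uniform statement ``for every target generalized cylinder there is a source generalized cylinder mapped into it'' out of the two isolated pieces of local data \textit{a.} and \textit{b.}, and to keep the shift indices and the involved prefixes straight when $\Phi$ carries $\mathcal{L}_k(\Lambda)$ into $\mathcal{L}_\ell(\Gamma)$ with $\ell\ge 1$, whereas \ref{theo:general-CHL-Theo:condition_2} is phrased only for images in $\mathcal{L}_0(\Gamma)$; Remark \ref{rmk:finite_subcover} and Proposition \ref{sliding block code->preserves_period} are what make this bookkeeping go through. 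A minor point worth recording is that, in the direct implication, finite definability of $C_a$ for $a\in\B$ costs nothing beyond continuity, so the hypothesis on the $C_{\tilde G}$ is the only extra input the theorem really needs.
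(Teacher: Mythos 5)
The paper itself offers no written proof of this theorem: it is stated with a \textsc{qed} box and the remark that ``the proof can be adapted from \cite[Theorem 3.8]{GS2019}'', so there is no line-by-line argument to compare against. Your reconstruction --- Proposition \ref{prop:shift_invariant_maps} to pass between $\Phi$ and the partition $\{C_a\}$, Proposition \ref{prop:sliding_bock_codes_continuous_on_infinity_sequences} for continuity on $\Lambda$ and shift-commutation in the converse, and the two directions of Theorem \ref{theo:CSC} as the engine for continuity --- is exactly the adaptation the authors intend, and the pieces you supply that are genuinely specific to this theorem are correct: the observation that for $a\in\B$ the set $C_a=\Phi^{-1}(Z_\Gamma(a))$ is clopen hence finitely defined (via $Z(\alpha\bar H,F)=[\alpha\h]_0^{|\alpha|}\cup\bigcup_{h\in H\setminus F}[\alpha h]_0^{|\alpha|}$), so that the hypothesis on the $C_{\tilde G}$ is the only real extra input; the extraction of \ref{theo:general-CHL-Theo:condition_2}.a and \ref{theo:general-CHL-Theo:condition_2}.b from continuity at $(x_0\ldots x_{n-1}\h\h\h\ldots)$ by testing the targets $Z_\Gamma(\bar G)$ and $Z_\Gamma(\bar G,\{g\})$; and the period-$1$ argument giving $\ell\le k$.

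The one step you should not leave at the level of ``the bookkeeping goes through'' is the bridge from condition \ref{theo:general-CHL-Theo:condition_2} to item \emph{ii.}\ of Theorem \ref{theo:CSC}. Your $\s^\ell$-reduction applies condition \ref{theo:general-CHL-Theo:condition_2} to $\s^\ell(\x)=(x_\ell\ldots x_{k-1}\h\h\h\ldots)$ and, after enlarging $F$ by the finitely many symbols from part \emph{b.}, yields $\Phi\bigl(Z_\Lambda(x_\ell\ldots x_{k-1}\bar H,F')\bigr)\subset Z_\Gamma(\bar G,\tilde F)$ --- a cylinder on the word $x_\ell\ldots x_{k-1}$ of length $k-\ell$. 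Item \emph{ii.}\ of Theorem \ref{theo:CSC} as printed asks instead for the cylinder on $x_{k-\ell}\ldots x_{k-1}$, of length $\ell$; these are different sets in general (and for $\ell=0$ the printed version degenerates to the empty word, which would be far stronger than continuity at a single point can give). So either you reconcile the indices explicitly, or you note that the cylinder your reduction produces is the one actually needed to certify continuity at $\x$ (since $\z\in Z_\Lambda(x_0\ldots x_{k-1}\bar H,F')$ forces $\s^\ell(\z)\in Z_\Lambda(x_\ell\ldots x_{k-1}\bar H,F')$ and hence $(\Phi(\z))_\ell\in\bar G\setminus\tilde F$), and that the statement of Theorem \ref{theo:CSC}.\emph{ii} should be read accordingly. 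With that one clarification your argument is complete and is, as far as one can tell, the same route the paper has in mind.
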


\qed

The next result can be directly proved by combining Proposition \ref{prop:length-preserving_map} and Theorem \ref{theo:general-CHL-T} above, and corresponds to Corollary 4 in \cite{GS2019}.

\begin{cor}\label{cor:length-preserving_GSBC} A map $\Phi:\Sigma_\Lambda\to \Sigma_\Gamma$ is continuous, shift commuting, and length-preserving, if and only if it is a generalized sliding block code such that:
\begin{enumerate}
\item\label{cor:length-preserving_SBC:condition_1} For each $a\in \mathcal{B}$, the set $C_a$ is a (possibly empty) union of generalized cylinders of $\Sigma_\Lambda$;

\item\label{cor:length-preserving_SBC:condition_2} $\mathcal{L}_0(\Lambda)=\bigcup_{\tilde G\in\tilde \U}C_{\tilde G}$.

\item\label{cor:length-preserving_SBC:condition_3} If $\Phi(\h\h\h\ldots) = (\tilde G\tilde G \tilde G\ldots) \in \partial\Gamma$ then:
 \begin{enumerate}[a.]
 \item There exists a finite $F \subset H$ such that $\Phi\big(Z_\Lambda(\bar H,F)\big) \subset Z_\Gamma(\bar G)$;

\item For each $g\in G$, there are only a finite number of $h\in H$ for which there exists some sequence $( h x_1x_2 ...)$ belonging to $C_g$.

\end{enumerate}

\end{enumerate}
\end{cor}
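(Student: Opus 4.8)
The plan is to obtain the corollary by combining Proposition \ref{prop:length-preserving_map} with the general Curtis--Hedlund--Lyndon theorem for blur shifts, Theorem \ref{theo:general-CHL-T}, the length-preserving hypothesis being exactly the feature that lets the two statements mesh. The only substantive work is to check that, once condition \ref{cor:length-preserving_SBC:condition_2} holds (equivalently, once $\mathcal{L}_0(\Lambda)=\bigcup_{\tilde G\in\tilde\U}C_{\tilde G}$ holds), the family $\{C_{\tilde G}\}_{\tilde G\in\tilde\U}$ consists automatically of finitely defined sets---so that the standing hypothesis of Theorem \ref{theo:general-CHL-T} is satisfied---and that conditions \ref{theo:general-CHL-Theo:condition_2} and \ref{theo:general-CHL-Theo:condition_3} of that theorem collapse, respectively, to condition \ref{cor:length-preserving_SBC:condition_3} of the corollary and to a vacuous statement. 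The key structural remark I would record first is that every point of $\mathcal{L}_0(\Lambda)$ is of the form $(\h\h\h\ldots)$ with $\h\in\tilde\V_\Lambda$ and coincides with the pseudo cylinder $[\h]_0^0$, since in $\Sigma_{\A^\N}$ a coordinate equal to a symbol of $\tilde\V$ forces every later coordinate to be that same symbol. Hence $\mathcal{L}_0(\Lambda)=\bigcup_{\h\in\tilde\V_\Lambda}[\h]_0^0$ and $\Sigma_\Lambda\setminus\mathcal{L}_0(\Lambda)=\{\x:\ x_0\in\A\}=\bigcup_{a\in B_1(\Lambda)}[a]_0^0$ are both unions of pseudo cylinders; consequently, whenever $C_{\tilde G}\subset\mathcal{L}_0(\Lambda)$, both $C_{\tilde G}=\bigcup\{[\h]_0^0:\ \h\in\tilde\V_\Lambda,\ \Phi(\h\h\h\ldots)=(\tilde G\tilde G\tilde G\ldots)\}$ and $C_{\tilde G}^c=\big(\Sigma_\Lambda\setminus\mathcal{L}_0(\Lambda)\big)\cup\big(\mathcal{L}_0(\Lambda)\setminus C_{\tilde G}\big)$ are unions of pseudo cylinders, so each $C_{\tilde G}$ is finitely defined.

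For the ``only if'' direction, assume $\Phi$ is continuous, shift commuting and length preserving. Proposition \ref{prop:length-preserving_map} gives condition \ref{cor:length-preserving_SBC:condition_2}, and in particular each $C_{\tilde G}$ is contained in $\mathcal{L}_0(\Lambda)$, hence finitely defined by the remark above; so Theorem \ref{theo:general-CHL-T} applies and shows that $\Phi$ is a generalized sliding block code satisfying its conditions \ref{theo:general-CHL-Theo:condition_1}--\ref{theo:general-CHL-Theo:condition_3}. Condition \ref{theo:general-CHL-Theo:condition_1} is precisely condition \ref{cor:length-preserving_SBC:condition_1}. Since $\Phi$ maps $\mathcal{L}_n(\Lambda)$ into $\mathcal{L}_n(\Gamma)$, the hypothesis of condition \ref{theo:general-CHL-Theo:condition_2}---that $\Phi(x_0\ldots x_{n-1}\h\h\h\ldots)$ lies in $\mathcal{L}_0(\Gamma)$---can hold only when $n=0$, and then conditions \ref{theo:general-CHL-Theo:condition_2}(a)--(b) are verbatim conditions \ref{cor:length-preserving_SBC:condition_3}(a)--(b); likewise the hypothesis of condition \ref{theo:general-CHL-Theo:condition_3} never holds, because $\Phi(\h\h\h\ldots)\in\mathcal{L}_0(\Gamma)$ begins with a symbol of $\tilde\U$ and so is never a point of $\Gamma$. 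This yields all three conditions of the corollary.

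For the ``if'' direction, assume $\Phi$ is a generalized sliding block code satisfying conditions \ref{cor:length-preserving_SBC:condition_1}--\ref{cor:length-preserving_SBC:condition_3}. By Proposition \ref{prop:sliding_bock_codes_continuous_on_infinity_sequences} the map $\Phi$ is shift commuting, and by the definition of a generalized sliding block code (Definition \ref{defn:GSBC}) all the sets $C_a$---in particular all the $C_{\tilde G}$---are finitely defined, so the standing hypothesis of Theorem \ref{theo:general-CHL-T} holds. Condition \ref{cor:length-preserving_SBC:condition_2} together with Proposition \ref{prop:length-preserving_map} makes $\Phi$ length preserving, so once more $\Phi(\mathcal{L}_n(\Lambda))\subset\mathcal{L}_n(\Gamma)$; arguing exactly as above, condition \ref{theo:general-CHL-Theo:condition_2} of Theorem \ref{theo:general-CHL-T} is equivalent to its $n=0$ instance, which is condition \ref{cor:length-preserving_SBC:condition_3}, condition \ref{theo:general-CHL-Theo:condition_3} is vacuous, and condition \ref{theo:general-CHL-Theo:condition_1} is condition \ref{cor:length-preserving_SBC:condition_1}. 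Hence the converse half of Theorem \ref{theo:general-CHL-T} applies and gives that $\Phi$ is continuous and shift commuting; it is length preserving by the previous sentence, completing the proof. The only genuine obstacle here is the bookkeeping: establishing finite-definedness of the $C_{\tilde G}$ (handled by the structural remark in the first paragraph) and making rigorous the reduction of condition \ref{theo:general-CHL-Theo:condition_2} of Theorem \ref{theo:general-CHL-T} to its $n=0$ instance via length-preservation, while carefully matching its quantifier pattern with that of condition \ref{cor:length-preserving_SBC:condition_3}.
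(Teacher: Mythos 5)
Your proposal is correct and follows exactly the route the paper indicates: it obtains the corollary by combining Proposition \ref{prop:length-preserving_map} with Theorem \ref{theo:general-CHL-T}, and the details you supply (the finite-definedness of the sets $C_{\tilde G}$ once they are contained in $\mathcal{L}_0(\Lambda)$, the collapse of condition \ref{theo:general-CHL-Theo:condition_2} of the theorem to its $n=0$ instance via length-preservation, and the vacuity of condition \ref{theo:general-CHL-Theo:condition_3}) are precisely the bookkeeping the paper leaves implicit. No gaps.
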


\qed

\section{Conclusion and open problems}

In this work we proposed and studied a new class of symbolic systems, named blur shifts. Roughly, a blur shift is the closure of a classical shift space with respect to some topology which depends on a chosen resolution $\V$ (any family of infinite subsets of the original alphabet with finite pairwise intersection).  The class of blur shifts includes all classical shift spaces (if resolution $\V=\emptyset$ is taken) and also two recently proposed new classes of symbolic systems: Ott-Tomforde-Willis shifts \cite{Ott_et_Al2014} (if resolution $\V=\{\A\}$ is taken, where $\A$ is the entire original alphabet); and Gonçalves-Royer ultragraph shifts \cite{GRISU} (if resolution $\V$ is taken as the family of all minimal infinite emitters of an ultragraph).

In this section we shall highlight some open problems and some possible applications regarding blur shifts.\\

\begin{prob}\label{prob:metrizability_conditions} To find a complete set of sufficient and necessary conditions for a blur shift to be metrizable. -- Note that we only have a complete characterization of metrizability for compact blur shifts (Corollary \ref{cor:equiv_count_metr}). For the general case of non-compact blur shifts we have found only some sufficient conditions for metrizability  (Theorem \ref{theo:Metrizable}), and we know no other necessary condition than being first countable (which is characterized by Proposition \ref{prop:first_countable}).
\end{prob}

\begin{prob}\label{prob:metric_construction} To construct metrics for non-second countable blur shift spaces. -- Although from Theorem \ref{theo:Metrizable} we have that non-second countable blur shift spaces can be metrizable (see Example \ref{ex:metrizable_blur_shifts}), we have not a specific description of such metrics. In particular, it would be interesting to find general procedures to construct metrics for any non-second countable blur shifts as made in Section \ref{subsec:metric} for second countable blur shifts.
\end{prob}

From the dynamical perspective, it would be interesting to study how the choice of a resolution affects informational and ergodic properties of the shifts. For instance, we could consider the following problem:

\begin{prob}\label{prob:chaos} To study the chaotic behaviour of blur shifts for distinct resolutions. -- For Gonçalves-Royer ultragraph shifts it was found very interesting relations between different types of chaos \cite{GOUG2020-1,GOUG2020-2}. Besides to study those relations for non-Markovian blur shifts, it would also be interesting to check how chaos is affected when we fix a classical shift space (a Markov shift, for instance) and vary the resolution of the blur shift.
\end{prob}

The main application of blur shifts could be to serve as a tool to extend the results about $C^*$-algebra isomorphisms given in \cite{GRISU} and \cite{Ott_et_Al2014} to a wider class of labelled graphs. In order to achieve this kind of result, the first question to be answered concerns on the resolution to be used in the blur shifts:

\begin{prob}\label{prob:natural_resolution} Given a classical shift space, is there some `natural' resolution compatible with the dynamical and algebraic structures? --  Gonçalves-Royer ultragraph shifts were defined as blur shifts whose resolutions depend on the structure of the given ultragraphs (the minimal infinite emitter sets). This leads us to suppose that for each given shift space could there be some `natural' resolution which one should consider when studying the relation between the conjugacy of the shift spaces and the isomorphism of the graph $C^*$-algebra.
\end{prob}

In particular, we expect that blur shifts might be used to study the correspondence between the conjugacy of a subclass of weakly sofic shifts (see sections 6 and 7 of \cite{Sobottka2020}) and the isomorphism of the $C^*$-algebras of their respective labeled graphs. For this specific case of weakly sofic shifts, it is not so naive to think that the `natural' resolution exists and corresponds to some construction analogous to that made for Gonçalves-Royer ultragraph shifts. By conjecturing that this is exactly the `natural' resolution for a weakly sofic shift, and by recalling the RFUM condition (expressed in the statement of Corollary \ref{cor:local_compactness}) and the conditions for a map to be a length-preserving topological conjugacy (given in Corollary \ref{cor:length-preserving_GSBC}), we conjecture as follows.

\begin{conj} Let $\Lambda\subset\A^\N$ and $\Gamma\subset\B^\N$ be two  weakly sofic shifts whose associated labeled graphs are left-resolving and such that there are only finitely many vertexes that are source of each fixed label. Let $\Sigma_\Lambda$ and $\Sigma_\Gamma$ be the respective blur shifts for the `natural' resolutions. Suppose that $\Sigma_\Lambda$ and $\Sigma_\Gamma$ hold the condition RFUM. If $\Sigma_\Lambda$ and $\Sigma_\Gamma$ are  topologically conjugate via a lenght-preserving generalized sliding block code, then the $C^*$-algebras associated to the labeled graphs of $\Lambda$ and $\Gamma$ are isomorphic.
\end{conj}

As first step to prove the above conjecture, we could consider less general versions of it by restricting our study to sofic shifts \cite[Section 6]{Sobottka2020} or by considering (weakly) sofic shifts whose associated labeled graphs are such that there are only finitely many edges with a same label (besides being left-resolving).\\

Furthermore, we expect that blur shifts could be used to extend other results on $C^*$-algebras. For instance, blur shifts could be useful to extend the results about KMS and ground states for a time evolution on the $C^*$-algebra of a graph or ultragraph  \cite{CarlsenLarsen2016,CastroGoncalves2018} to other types of labeled graphs.


\section*{Acknowledgments}

\noindent This study was financed in part by the Coordenação de Aperfeiçoamento de Pessoal de Nível Superior - Brasil (CAPES) - Finance Code 001.

\noindent T. Z. Almeida was partially supported by CAPES-Brazil fellowship.

\noindent M. Sobottka was supported by CNPq-Brazil grant 301445/2018-4.
Part of this work was carried out while the author was visiting Professor fellow of CAPES-Brazil at Pacific Institute for the Mathematical Sciences, University of British Columbia.

\noindent The authors thank A. T. Baraviera, U. B. Darji, D. Gonçalves and A. O. Lopes   for valuable suggestions and comments regarding the manuscript.



\begin{thebibliography}{20}


\bibitem{Arkhangelskii _Pontryagin}
{\sc Arkhangel'ski\v i, A. V.} {\sc and} {\sc Pontryagin, L. S.} (1990).
\newblock ``General Topology I '',
\newblock {\em Springer-Verlag, New York.}


\bibitem{CarlsenLarsen2016} 
{\sc Carlsen, T. M.} {\sc and} {\sc Larsen, N. S.}  (2016).
\newblock {\em Partial actions and KMS states on relative graph C*-algebras},
\newblock J. Funct. Anal., {\bf 271}, 8, 2090-2132.

\bibitem{CastroGoncalves2018} 
{\sc Carlsen, T. M.} {\sc and} {\sc Larsen, N. S.}  (2018).
\newblock {\em KMS and Ground States on Ultragraph C*-Algebras},
\newblock Integr. Equ. Oper. Theory, {\bf 2018}, 90:63.

\bibitem{Franklin64}
{\sc Franklin, S.} (1965).
\newblock {\em Spaces in which sequences suffice},
\newblock Fundamenta Mathematicae, {\bf 57}, q, 107-115.

\bibitem{GRISU} 
{\sc Gonçalves, D.} {\sc and} {\sc Royer, D.}  (2019).
\newblock {\em Infinite alphabet edge shift spaces via ultragraphs and their C*-algebras},
\newblock Int. Math. Res. Not., {\bf 2019}, 2177-2203.


\bibitem{GR}
{\sc Gonçalves, D.} {\sc and} {\sc Royer, D.}  (2015).
\newblock  {\em (M + 1)-step shift spaces that are not conjugate to M-step shift spaces},
\newblock Bulletin des Sciences Math\'{e}matiques (Paris. 1885), \textbf{139}, issue 2, 178-183.

\bibitem{GS2019} {\sc Gonçalves, D.} {\sc and} {\sc Sobottka, M.} (2019).
\newblock {\em Continuous shift commuting maps between ultragraph shift spaces.},
\newblock  Discrete and Continuous Dynamical Systems, {\bf 39} (2), 1033-1048.


\bibitem{GSS} {\sc Gonçalves, D.}, {\sc Sobottka, M.} {\sc and} {\sc Starling, C.} (2016).
\newblock {\em Sliding block codes between shift spaces over infinite alphabets},
\newblock Math. Nachr., {\bf 289} (17-18), 2178-2191.

\bibitem{GSS1} {\sc Gonçalves, D.}, {\sc Sobottka, M.} {\sc and} {\sc Starling, C.} (2017).
\newblock {\em Two-sided shift spaces over infinite alphabets},
\newblock J. Aust. Math. Soc., {\bf 103} (3), 357-386.

\bibitem{GOUG2020-1}  {\sc Gonçalves, D.} {\sc and} {\sc Uggioni, B. B.} (2020).
\newblock  {\em Ultragraph shift spaces and chaos}, 
\newblock  Bulletin des Sciences Mathématiques, {\bf 158}, 102807.

\bibitem{GOUG2020-2}  {\sc Gonçalves, D.} {\sc and} {\sc Uggioni, B. B.} (2020).
\newblock  {\em Li-Yorke chaos for ultragraph shift spaces}, 
\newblock  Discrete and Continuous Dynamical Systems, {\bf 40} (4), 2347-2365.



\bibitem{Munkres}
{\sc Munkres, J.~R.} (2000).
\newblock ``Topology'',
\newblock {\em Prentice Hall, Upper Saddle River.}

\bibitem{Sobottka2020}
{\sc Sobottka, M.} (2020).
\newblock {\em Some notes on the classification of shift spaces: Shifts of Finite Type; Sofic shifts; and Finitely Defined Shifts},
\newblock  Preprint on arXiv: 2010.10595.

\bibitem{SobottkaGoncalves2017}
{\sc Sobottka, M.} {\sc and} {\sc Gonçalves, D.} (2017).
\newblock {\em A note on the definition of sliding block codes and the Curtis-Hedlund-Lyndon Theorem},
\newblock  Journal of Cellular Automata, ~\textbf{12},~3--4, 209--215.

\bibitem{Ott_et_Al2014}
{\sc Ott, W.}, {\sc Tomforde, M.} {\sc and} {\sc Willis,
P.~N.} (2014).
\newblock  {\em One-sided shift spaces over infinite alphabets}, New York Journal of Mathematics. NYJM Monographs 5. State University of New York, University at Albany, Albany, NY. 54 pp.




\end{thebibliography}
\end{document}